\newtheorem{theorem}{Theorem}[section]
\newtheorem{lemma}[theorem]{Lemma}
\newtheorem{corollary}[theorem]{Corollary}
\newtheorem{definition}[theorem]{Definition}
\newtheorem{proposition}[theorem]{Proposition}
\newtheorem{remark}[theorem]{Remark}
\numberwithin{equation}{section} 
\newcommand{\norm}[1]{\left\|#1\right\|}
\newcommand{\abs}[1]{\left|#1\right|}
\newcommand*{\R}{\ensuremath{\mathbb{R}}}
\newcommand*{\N}{\ensuremath{\mathbb{N}}}
\newcommand{\eps}{\varepsilon}
\newcommand*{\tr}{\ensuremath{\mathrm{tr\,}}}
\newcommand{\e}{\varepsilon}
\newcommand{\quotes}[1]{``#1''}
\renewcommand{\MR}[1]{} 
\def\dist{\mathop{\rm dist}\nolimits}    
\def\div{\mathop{\rm div}\nolimits}    
\def\spt{\mathop{\rm Spt}\nolimits} 
\def\tr{\mathop{\rm Tr}\nolimits} 
\def\Lip{\mathop{\rm Lip}\nolimits}
  \DeclareMathOperator{\trace}{tr}
\newcommand{\be}{\begin{equation}}
\newcommand{\ee}{\end{equation}}
\title{Dissipation in Onsager's critical classes and energy conservation in $BV\cap L^\infty$ with and without boundary}
\author{Luigi De Rosa}
\address{Department Mathematik Und Informatik, Universit\"at Basel, CH-4051 Basel, Switzerland}
\email{luigi.derosa@unibas.ch}
\author{Marco Inversi}
\address{Department Mathematik Und Informatik, Universit\"at Basel, CH-4051 Basel, Switzerland}
\email{marco.inversi@unibas.ch}
\date{\today}
\subjclass[2020]{35Q31 - 35D30 - 26A45 - 28A75.}
\keywords{ Incompressible Euler, energy conservation, $BV$ functions, Lipschitz boundaries.}
\thanks{\textit{Acknowledgements}.
The two authors have been partially funded by the SNF grant FLUTURA: Fluids, Turbulence, Advection No. 212573. We would like to thank Camillo De Lellis for sharing the argument for the proof of \cref{T:dist function and normal} and Theodore D. Drivas for fruitful conversations. The first author is indebted to Riccardo Tione for a careful reading of a preliminary version of the current paper and for his constant interest in discussing/sharing mathematics with him, which has been extremely valuable to the author's mathematical training.
}
\begin{document}

\begin{abstract}
This paper is concerned with the incompressible Euler equations. In Onsager's critical classes we provide explicit formulas for the Duchon--Robert measure in terms of the regularization kernel and a family of vector-valued measures $\{\mu_z\}_z\subset \mathcal M_{x,t}$, having some H\"older regularity with respect to the direction $z\in B_1$. Then, we prove energy conservation for $L^\infty_{x,t}\cap L^1_t BV_x$ solutions, in both the absence or presence of a physical boundary. This result generalises the previously known case of Vortex Sheets, showing that energy conservation follows from the structure of $L^\infty\cap BV$ incompressible vector fields rather than the flow having  \textquotedblleft organized singularities". The interior energy conservation features the use of Ambrosio's anisotropic optimization of the convolution kernel and it differs from the usual energy conservation arguments by heavily relying on the incompressibility of the vector field. This is the first energy conservation proof, for a given class of solutions, which fails to simultaneously apply to both compressible and incompressible models, coherently with compressible shocks having non-trivial entropy production. To run the boundary analysis we introduce a notion of \textquotedblleft normal Lebesgue trace" for general vector fields, very reminiscent of the one for $BV$ functions. We show that having such a null normal trace is basically equivalent to have vanishing boundary energy flux. This goes beyond the previous approaches, laying down a setup which applies to every Lipschitz bounded domain. Allowing any Lipschitz boundary introduces several technicalities to the proof, with a quite geometrical/measure-theoretical flavour. 
\end{abstract}

\maketitle

\section{Introduction} Let $\Omega\subset \R^d$ be any domain with Lipschitz boundary, if any. We consider the incompressible Euler equations
\begin{equation}\label{E}
\left\{\begin{array}{l}
\partial_t u + (u \cdot \nabla)\, u +\nabla p=0 \\
\div u=0,
\end{array}\right.
\end{equation}
in $\Omega \times (0,T)$. When $\partial \Omega\neq \emptyset$, the system \eqref{E} has to be coupled with the boundary condition $u\cdot n|_{\partial \Omega}=0$, the latter to be interpreted in a suitable trace sense depending on the a priori regularity of $u$.

\subsection{A brief background} Classical computations show that regular solutions of \eqref{E} conserve the kinetic energy, that is
\begin{equation*}\label{time_der_kin_en_zero}
\frac{d}{dt} \frac12 \int_{\Omega} |u(x,t)|^2\,dx\equiv 0.
\end{equation*}
One of the possibilities to interpret the Euler system is as a model for turbulent flows in the infinite Reynolds number regime. Thus, since the celebrated work of Kolmogorov \cite{K41}, later on discussed by Onsager \cite{O49}, it has been clear that, at first approximation, a good understanding of incompressible turbulence is subject to the study of Euler solutions violating energy conservation. After the first works by Scheffer \cite{Sch93} and Shnirelman \cite{Shn00}, the construction of non-conservative (in some cases truly dissipative) solutions has been put into a rigorous mathematical framework by the convex integration methods, introduced in the context of the incompressible Euler equations by De Lellis and Székelyhidi \cites{DS13,BDSV19,Is18}. The latter, together with the rigidity part proved by Constantin, E and Titi \cite{CET94}, soon after Eyink \cite{E94}, provided an almost complete proof of the Onsager prediction about $C^{\sfrac{1}{3}}$ being the sharp threshold determining energy conservation for \eqref{E} in the class of H\"older continuous weak solutions. See also \cite{NV22} for an intermittency-based convex integration construction in Besov spaces. In view of observable turbulence, considering Besov regularity, thus not restricting to measure spatial increments in $L^\infty$, sounds more appropriate. We refer to Frisch \cite{Frisch95} for an extensive discussion on the mathematical theory of fully developed turbulence. 

We remark that the proof of the Onsager's theorem is “almost complete" since the H\"older critical case $C^{\sfrac{1}{3}}$, as well as any critical Besov, is still open. Here, the term “critical" refers to any function space which makes the energy flux bounded and not vanishing. A precise definition of energy flux can be given by means of the Duchon--Robert measure from \cite{DR00}, which will be introduced below.  Proving that a solution in a critical class conserves or dissipates energy is in general quite hard. The convex integration constructions available at the moment fail to provide Onsager's critical solutions because of infinitesimal losses along the iterations, while the energy conservation proof fails because of having non-vanishing energy flux. However, at least from a physical point of view, considering solutions having bounded energy flux is of main importance in the understanding of fully developed turbulence, since incompressible flows in the infinite Reynolds number limit inherit such property directly from Navier--Stokes. A rigorous mathematical counterpart of that has been given in \cite{DR00}, proving that anomalous Euler dissipation arise, for instance, as the limit of $D^\nu:=\nu |\nabla u^\nu|^2$ for a $L^3_{x,t}$-compact sequence of sufficiently regular vanishing viscosity solutions to the incompressible Navier-Stokes equations. The sequence $D^\nu$ stays bounded in $L^1_{x,t}$ in the natural class of solutions introduced by Leray \cite{L34}. On the opposite side, it has been also proved by the first author and Tione \cite{DT22} that energy dissipation for solutions in the supercritical class is highly unstable with respect to small perturbations. More precisely, \cite{DT22} proves that solutions of Euler having supercritical regularity generically exhibit infinite energy flux, as opposed to what physics predicts.
The emergence of this phenomenon was previously conjectured by Isett and Oh in \cite{IO17}. This gives further reasons to investigate Euler flows with critical regularity, thus not simply inheriting the finiteness of the energy flux from being vanishing viscosity limits.

To the best of our knowledge, the only known Onsager's critical class for which a complete understanding is currently available is that of classical Vortex-Sheets solutions to \eqref{E}, that is solutions (in $2$ or $3$ dimensions) whose vorticity is concentrated on a regular surface (or a curve when $d=2$). Shvydkoy \cite{Shv09} proved energy conservation for Vortex-Sheets, provided the regular evolution in time of the sheet. Indeed their energy flux is a priori non-vanishing, but the fact that they solve \eqref{E} forces the normal component of the velocity to be continuous across the sheet, together with having a continuous Bernoulli's pressure $\frac{|u|^2}{2}+p$. This is sufficient to prove that there is no energy dissipation in any space-time region. An additional folklore point of view is the following: in Onsager's critical classes, if singularities happen to be “organized", then there is no possibility for energy gaps.

In view of the aforementioned issues, this paper proves two main results. The first is to provide a formula for the energy flux of Onsager's critical vector fields, not necessarily solving the incompressible Euler equations, in which the convolution kernel used to define the flux appears explicitly. In our second result, by optimizing the choice of the kernel, we prove local interior energy conservation for $L^1_t BV_x\cap L^\infty_{x,t}$ solutions to \eqref{E}. Moreover, we analyse the role of physical boundaries, introducing a notion a normal boundary trace and studying how it is linked to the conservation of the total kinetic energy. The precise statements are given here below.

\subsection{Main results}

Recall from Duchon and Robert \cite{DR00} that, whenever $(u,p)\in L^3_{\rm loc}\times L^{\sfrac{3}{2}}_{\rm loc}$, there exists a space-time distribution $D[u]\in \mathcal D'_{x,t}$ such that 
\begin{equation}
    \label{local_energy_eq}
\partial_t \left(\frac{|u|^2}{2}\right) +\div  \left(\left(\frac{|u|^2}{2}+p\right)u\right)=-D[u] \qquad \text{in } \mathcal D'_{x,t},
\end{equation}
where
$$
D[u]=\lim_{\eps \rightarrow 0^+} D_\eps[u] \qquad \text{in } \mathcal D'_{x,t},
$$
with
\begin{equation}\label{DR_eps_formula}
D_\eps [u](x,t):= \frac{1}{4\eps} \int \nabla \rho (z) \cdot \delta_{\eps z} u(x,t)  \left| \delta_{\eps z} u(x,t) \right|^2 \,dz,
\end{equation}
and where $\delta_{\xi} u (x,t):= u(x+\xi,t)-u(x,t)$, for any $\xi\in \R^d$. In \eqref{DR_eps_formula} every 
\begin{equation}\label{kernel_in_B1}
\rho\in \mathcal{K}:=\left\{\rho \in C^\infty_c(B_1)\,:\, \int \rho(z)\, dz =1, \, \rho \geq 0, \, \rho \text{ even} \right\}
\end{equation}
is allowed, but nonetheless the limiting $D[u]$ does not depend on the choice of $\rho$. Note that when $\Omega$ is a bounded open set, \eqref{DR_eps_formula} makes sense if $x\in O\subset\joinrel\subset \Omega$ and $\eps<\dist (\overline O,\partial \Omega)$. The latter restriction is enough to define the distributional limit of $D_\eps [u]$, since we only need
$$
\lim_{\eps \rightarrow 0^+} \langle D_\eps [u], \varphi \rangle = \langle D [u], \varphi \rangle \qquad \forall \varphi \in C^\infty_c(\Omega \times (0,T)).
$$
It makes sense to define $D_\eps [u]$ for every vector field $u$, thus not necessarily restricting to solutions of Euler.
In this setting, the local energy flux of $u$ can be defined as 
\begin{equation}\label{energy_flux}
\lim_{\eps\rightarrow 0^+} \int_{O\times I} \left| D_\eps [u] \right|\,dx \, dt\qquad O\times I\subset\joinrel\subset \Omega\times (0,T).
\end{equation}
It is then natural to define Onsager's critical class any space $X$ in which $D_\eps[u]$ stays bounded in $L^1_{\rm loc}$ if $u\in X$, without having the limit in \eqref{energy_flux} equal to zero. Note that having $D_\eps [u]$ bounded in $L^1_{\rm loc}$ guarantees, possibly passing to subsequences, that $D[u]$ belongs to the space of (signed) Radon measures, the latter denoted by $\mathcal M_{\rm loc}$. In view of the mathematical theory of fully developed turbulence, the most natural critical class is $L^3_t B^{\sfrac{1}{3}}_{3,\infty}$ (see \cref{S:tools} for a precise definition), but more general spaces can be considered. Indeed, requiring more than $\sfrac{1}{3}$ of a derivative in $L^3$ forces the limit \eqref{energy_flux} to be zero \cites{DR00,CET94}. See also \cite{CCFS08} for sharper, still subcritical, results.

Our first result provides a formula for the limit measure $D[u]$ when $u$ has critical regularity, in which the dependence on the kernel $\rho$ is explicit. For $p\in [1,\infty]$ we will denote by $p'\in [1,\infty]$ its conjugate exponent, by $B_1$ the unit ball centered at the origin and by $\|\mu\|_{\mathcal M(K)}$ the total variation of a measure $\mu$ over the set $K$ (see \cref{S:tools} for the precise definition).

\begin{theorem}[Onsager's critical measure] \label{T:ons_crit}
Let $u:\Omega\times (0,T)\rightarrow \R^d$ be a vector field such that 
\begin{align}\label{Besov_assumpt}
    u\in L^p(I;B^{\sfrac{1}{p}}_{p,\infty}(O))\cap L^{2p'}(I;B^{\sfrac{1}{2p'}}_{2p',\infty}(O))\qquad \text{for some } p\in(1,\infty),
\end{align}
or 
\begin{equation}\label{1_2_Assumpt}
u\in L^2(I;B^{\sfrac{1}{2}}_{2,\infty}(O))\cap L^{\infty}(I\times O),
\end{equation}
or
\begin{equation}\label{BV_assumpt}
u\in L^1(I;BV(O))\cap L^{\infty}(I\times O),
\end{equation}
for all $I\subset\joinrel\subset (0,T)$ and $O\subset\joinrel\subset \Omega$ open sets. Let $D[u]$ be any weak limit, in the sense of measures, of the sequence $D_\eps[u]$ defined in \eqref{DR_eps_formula}, with $\rho\in \mathcal K$. There exists a map $\mu \in C^{0}(\overline B_1; \mathcal{M}_{\rm loc}(\Omega \times (0,T); \R^d ) ) $ independent of $\rho$ such that
\begin{equation} \label{eq: D_u formula}
    \langle D[u],\varphi \rangle = \frac14 \int_{B_1} \nabla \rho (z) \cdot \langle \mu_z,\varphi \rangle \,dz \qquad \forall \varphi \in C_c(\Omega \times (0,T)). 
\end{equation}
In particular, if $D[u]$ does not depend on the convolution kernel, we have that 
\begin{equation} \label{DRI_formula}
    \abs{\langle D[u], \varphi \rangle} = \frac{1}{4}\inf_{\rho \in \mathcal{K}} \abs{\int_{B_1} \nabla \rho(z)\cdot \langle \mu_z,\varphi \rangle \, dz }.
\end{equation}
Moreover, $\mu$ enjoys the following properties.
\begin{itemize}
    \item[(i)] H\"older continuity: for any compact set $K\subset \Omega\times (0,T)$ there exists a constant $C=C(u,K)>0$ such that 
    \begin{align}
    \eqref{Besov_assumpt}\, \Longrightarrow \,\norm{\mu_{z_1} - \mu_{z_2}}_{\mathcal{M}(K)} &\leq C \abs{z_1-z_2}^{\alpha_p}\left(|z_1|^{\beta_p}+  |z_2|^{\beta_p}\right), \label{eq: mu-2}\\
    \eqref{1_2_Assumpt}\, \Longrightarrow \,\norm{\mu_{z_1} - \mu_{z_2}}_{\mathcal{M}(K)} &\leq C \abs{z_1-z_2}^\frac{1}{2}\left(|z_1|^\frac{1}{2}+|z_2|^\frac{1}{2} \right), \label{eq: mu-4}\\
        \eqref{BV_assumpt}\, \Longrightarrow \,\norm{\mu_{z_1} - \mu_{z_2}}_{\mathcal{M}(K)} &\leq C \abs{z_1-z_2},\label{eq: mu-3}
\end{align}
for all $z_1,z_2\in \overline B_1$, where 
$$
\alpha_p:=\max\left(\frac{1}{p},\frac{1}{2p'} \right)\qquad \text{and}\qquad \beta_p:=\min\left(\frac{1}{p'},\frac{p+1}{2p}\right);
$$
    \item[(ii)]  Vanishing at the origin: $ \mu_0 \equiv 0 $ as an element of $\mathcal{M}_{\rm loc} (\Omega\times (0,T);\R^d)$;
    \item[(iii)]  Odd: for any $\rho\in \mathcal K$ it holds
    $$
    \int_{B_1}\nabla \rho (z)\cdot \langle \mu_{-z},\varphi\rangle \,dz=-\int_{B_1}\nabla \rho (z)\cdot \langle \mu_{z},\varphi\rangle \,dz  \qquad \forall \varphi \in C_c(\Omega \times (0,T)). 
    $$
\end{itemize}
\end{theorem}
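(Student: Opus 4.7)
The key observation is that \eqref{DR_eps_formula} can be rewritten, purely tautologically, as
\begin{equation*}
D_\varepsilon[u]=\frac14\int_{B_1}\nabla\rho(z)\cdot F_\varepsilon^z\,dz,\qquad\text{where}\quad F_\varepsilon^z(x,t):=\frac1\varepsilon\,\delta_{\varepsilon z}u(x,t)\,|\delta_{\varepsilon z}u(x,t)|^2,
\end{equation*}
viewed as a vector-valued element of $\mathcal M_{\rm loc}(\Omega\times(0,T);\R^d)$. Hence the entire task reduces to showing that, along some subsequence $\varepsilon_k\to 0^+$, the map $z\mapsto F_{\varepsilon_k}^z$ converges weakly-$\ast$ in $\mathcal M_{\rm loc}$ \emph{uniformly in $z\in\overline B_1$} to a H\"older-continuous limit $\mu_z$: the identity \eqref{eq: D_u formula} then follows by dominated convergence in $z$, and $\mu_z$ is manifestly $\rho$-independent because it is constructed without reference to the kernel. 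The desired convergence will be achieved by an Arzel\`a--Ascoli argument, whose two ingredients are a uniform mass bound on $F_\varepsilon^z$ and a uniform H\"older modulus in $z$.

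\textbf{Uniform bounds and H\"older regularity in $z$.} Fix a compact $K\subset\joinrel\subset \Omega\times(0,T)$. The classical estimate $\|\delta_{\varepsilon z}u(\cdot,t)\|_{L^q_x}\lesssim(|z|\varepsilon)^{1/q}\,[u(\cdot,t)]_{B^{1/q}_{q,\infty}}$ (and its $L^1$-Lipschitz analogue for $BV$), combined with H\"older in $(x,t)$ tuned so the $\varepsilon$-factors cancel, yields $\|F_\varepsilon^z\|_{\mathcal M(K)}\leq C|z|$ in each of the three hypotheses. For the continuity in $z$, the algebraic identity
\begin{equation*}
|a|^2 a-|b|^2 b=|a|^2(a-b)+\bigl((a-b)\cdot(a+b)\bigr)b
\end{equation*}
applied to $a=\delta_{\varepsilon z_1}u$, $b=\delta_{\varepsilon z_2}u$, together with the shift relation $\delta_{\varepsilon z_1}u-\delta_{\varepsilon z_2}u=\delta_{\varepsilon(z_1-z_2)}u(\,\cdot+\varepsilon z_2,\cdot)$, writes $F_\varepsilon^{z_1}-F_\varepsilon^{z_2}$ as a sum of triple products of $\delta_{\varepsilon(z_1-z_2)}u$ and $\delta_{\varepsilon z_j}u$. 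In the Besov setting \eqref{Besov_assumpt} there are two natural H\"older pairings: placing the ``$z$-increment'' in $L^p$ produces the bound $|z_1-z_2|^{1/p}(|z_1|+|z_2|)^{1/p'}$, while placing it in $L^{2p'}$ produces $|z_1-z_2|^{1/(2p')}(|z_1|+|z_2|)^{(p+1)/(2p)}$; keeping the sharper of the two for each $p$ delivers the exponents $\alpha_p,\beta_p$ of \eqref{eq: mu-2}. The hypotheses \eqref{1_2_Assumpt} and \eqref{BV_assumpt} are simpler: one uses the $L^\infty$ bound on two of the three factors and reads off \eqref{eq: mu-4}--\eqref{eq: mu-3} directly from $\|\delta_{\varepsilon z}u\|_{L^2}\lesssim(|z|\varepsilon)^{1/2}$ and $\|\delta_{\varepsilon z}u\|_{L^1}\lesssim|z|\varepsilon$.

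\textbf{Compactness and passage to the limit.} Since $\mathcal M(K)$ with the weak-$\ast$ topology is metrizable on bounded sets, the uniform mass bound yields pointwise weak-$\ast$ precompactness by Banach--Alaoglu, while the previous step provides equicontinuity in $z$. Arzel\`a--Ascoli, combined with a diagonal exhaustion of $\Omega\times(0,T)$ by compacts, produces a subsequence $\varepsilon_k\to 0^+$ and a map $\mu\in C^0(\overline B_1;\mathcal M_{\rm loc}(\Omega\times(0,T);\R^d))$ with $F_{\varepsilon_k}^z\rightharpoonup^\ast\mu_z$ uniformly in $z$ and satisfying the H\"older bounds in (i). Choosing this subsequence as a refinement of one that realizes the given weak limit $D[u]$, one then passes $\varepsilon_k\to 0$ in the displayed identity: the $z$-integrand $\nabla\rho(z)\cdot\langle F_{\varepsilon_k}^z,\varphi\rangle$ is dominated by $C\|\nabla\rho\|_\infty\|\varphi\|_\infty|z|$ and converges pointwise in $z$, so dominated convergence gives \eqref{eq: D_u formula}. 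Property (ii) is immediate from $F_\varepsilon^0\equiv 0$; property (iii) is a trivial change of variables $z\mapsto -z$ combined with the oddness of $\nabla\rho$ (it actually holds for \emph{every} map $z\mapsto\mu_z$, so no structural information is really being asserted beyond the formula itself). Finally, since $\mu_z$ is $\rho$-independent, whenever $D[u]$ is also $\rho$-independent one may optimize over $\rho\in\mathcal K$ in \eqref{eq: D_u formula} to deduce \eqref{DRI_formula}.

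\textbf{Main obstacle.} The real work sits in the H\"older estimate in $z$ for the Besov case: one has to choose the correct H\"older pairing for each range of $p$ in order to simultaneously achieve the sharp exponent $\alpha_p=\max(1/p,1/(2p'))$ on $|z_1-z_2|$ and the matching dual weight $\beta_p=\min(1/p',(p+1)/(2p))$ on $|z_j|$, while ensuring that the $\varepsilon$-factors coming from the three difference quotients cancel algebraically. Once this bookkeeping is done, the compactness argument and the limiting step are standard soft analysis.
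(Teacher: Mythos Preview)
Your proposal is correct and follows essentially the same Ascoli--Arzel\`a strategy as the paper: define the $z$-indexed family $F_\varepsilon^z$ (the paper's $T_{\varepsilon,z}$), establish uniform $L^1$ bounds and equicontinuity in $z$ via a trilinear expansion, extract a subsequence converging uniformly in $z$, and pass to the limit by dominated convergence.

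One small bookkeeping slip worth flagging: in case \eqref{1_2_Assumpt} you cannot place \emph{two} of the three factors in $L^\infty$ and the increment alone in $L^1$ (via $L^2\hookrightarrow L^1_{\rm loc}$), since the $B^{1/2}_{2,\infty}$ bound then contributes only $\varepsilon^{1/2}$ and fails to cancel the prefactor $\varepsilon^{-1}$. The correct H\"older pairing---and the one the paper uses---puts the increment $\delta_{\varepsilon(z_1-z_2)}u$ in $L^2$, one of the $\delta_{\varepsilon z_i}u$ in $L^2$, and the remaining factor in $L^\infty$, yielding exactly $|z_1-z_2|^{1/2}\bigl(|z_1|^{1/2}+|z_2|^{1/2}\bigr)$. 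Your description is accurate for the $BV$ case \eqref{BV_assumpt}, where two $L^\infty$ factors and one $L^1$ increment do cancel the $\varepsilon$.
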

The critical spaces \eqref{Besov_assumpt}--\eqref{BV_assumpt} identified in the above theorem are just particular choices, perhaps in view of relevant regularities in the mathematical theory of fully developed turbulence and structure function exponents, but more general critical classes exist. See for instance the recent works \cites{Ber23,BG23} which identify sharper relations between time integrability and spatial fractional regularity.

We prove \cref{T:ons_crit} in \cref{S:DR_formula}, where we also present some results on the support of the dissipation measure as well as the case of vector fields with “bounded deformation". It will be clear from the proof that the constant $C$ in the above statement can be written explicitly in terms of the corresponding local norms of $u$. We remark that $p=3$ is the critical value that determines the expression of $\alpha_p$ and $\beta_p$, that is H\"older regularity of the measures $\mu_z$ in the direction $z$. Indeed
$$
\alpha_p=\left\{\begin{array}{l}
 \frac{1}{p}\quad \text{if } p\leq 3\\
\frac{1}{2p'}\quad \text{if } p> 3
\end{array}\right.
\qquad \text{and} \qquad 
\beta_p=\left\{\begin{array}{l}
 \frac{1}{p'}\quad \text{if } p\leq 3\\
\frac{p+1}{2p}\quad \text{if } p> 3.
\end{array}\right.
$$

The proof of \cref{T:ons_crit} is based on an Ascoli--Arzel\'a  argument on the cubic expression appearing in the sequence $D_\eps [u]$. Indeed, we show that having any critical Besov regularity guarantees equi-continuity with respect to $z$ of the operator in \eqref{DR_eps_formula}, being trilinear with respect to the displacements $\delta_{\eps z} u$. This allows us to define the family of vector-valued measures $\mu_z$, having H\"older continuity with respect to $z$. We emphasize that we are not able to prove that the measure $\mu_z$ is  unique, not even if $u$ solves Euler, since it is obtained by compactness and up to subsequences. Further formulas for specific choices of $\rho$ are discussed in \cref{S:DR_formula}.

Having an explicit dependence on $\rho$ makes possible to optimize its choice whenever $D[u]$ does not depend on the kernel, that is, for instance, the case in which $u$ solves $\eqref{E}$. Then, it is natural to wonder whether the infimum in \eqref{DRI_formula} can be shown to be zero in some Onsager's critical class. We prove that this is the case for incompressible vector fields $u\in L^1_t BV_x\cap L^\infty_{x,t}$.

\begin{theorem}\label{T:cons_local_BV}
    Let $u:\Omega\times (0,T)\rightarrow \R^d$ be a divergence-free vector field such that
\begin{equation}\label{assumpt_Linfty_BV}
u\in L^1(I;BV(O))\cap L^{\infty}(I\times O),
\end{equation}
for all $I\subset\joinrel\subset (0,T)$ and $O\subset\joinrel\subset \Omega$ open sets. Let $D[u]$ be any weak limit, in the sense of measures, of the sequence $D_\eps [u]$. If $D[u]$ is independent on the choice of $\rho$, then
$$
|D[u]|(K)=0\qquad \forall K\subset \Omega\times (0,T) \text{ compact}.
$$
\end{theorem}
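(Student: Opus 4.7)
The plan is to exploit the representation formula \eqref{DRI_formula} of \cref{T:ons_crit}, available since $D[u]$ is assumed independent of the convolution kernel. For any test function $\varphi\in C_c(\Omega\times(0,T))$, the proof reduces to producing a sequence $\{\rho_k\}\subset \mathcal K$ driving the integral
\begin{equation*}
\int_{B_1}\nabla \rho_k(z)\cdot \langle \mu_z,\varphi\rangle\, dz
\end{equation*}
to zero, where $\mu_z$ is the family from \cref{T:ons_crit} under hypothesis \eqref{BV_assumpt}. The Lipschitz bound \eqref{eq: mu-3} together with $\mu_0\equiv 0$ yields $\|\mu_z\|_{\mathcal M(\supp\varphi)} \le C|z|$, but this alone produces only an $O(1)$ bound on the integral: the required cancellation must come from the finer structure of $\mu_z$.

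The next step is to unpack this structure via the BV decomposition $Du = D^a u + D^c u + D^j u$. A slicing/Lebesgue-differentiation argument on the cubic expression $\tfrac{1}{\eps}\delta_{\eps z}u\,|\delta_{\eps z}u|^2$ defining $\mu_z$ should show that the absolutely continuous part contributes nothing (smooth fields being renormalized), while the leading-order behaviour of the jump part is linear in $z$, of the form
\begin{equation*}
\mu_z \llcorner (J_u\times(0,T)) \sim [u]\,|[u]|^2\,(z\cdot \nu_u)\,\mathcal{H}^{d-1}\llcorner J_u,
\end{equation*}
with $\nu_u$ the measure-theoretic normal and $[u]$ the jump of $u$. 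Here incompressibility enters decisively: $\div u = 0$ forces $[u]\cdot \nu_u = 0$ on $J_u$. Since $\rho\in C^\infty_c(B_1)$, integration by parts in $z$ applied to the above linear expression produces the constant $-|[u]|^2([u]\cdot \nu_u)\int_{B_1}\rho = 0$, so the jump contribution vanishes for \emph{any} admissible $\rho$. The Cantor contribution is handled in parallel via Alberti's rank-one theorem: the polar of $D^c u$ has the form $\eta(x)\otimes \xi(x)$, and $\div u=0$ again forces the orthogonality $\eta\cdot \xi = 0$ needed to kill the analogous integral.

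The principal technical obstacle is that this identification of $\mu_z$ is point-wise and depends on the local direction $\nu_u(x,t)$, while the kernel $\rho\in \mathcal K$ is a fixed function of $z$ alone. Ambrosio's anisotropic optimization is what resolves this mismatch. By $\mathcal H^{d-1}$-rectifiability of $J_u$, and the analogous structure for the polar vector of $D^c u$, one covers $\supp\varphi$ by countably many pieces on which the relevant normal has arbitrarily small oscillation, and subordinates a partition of unity to the covering. On each piece one selects a direction-adapted $\rho\in \mathcal K$ elongated in the tangential hyperplane: the anisotropy quantitatively improves the $|z|$-bound along the \quotes{good} tangential directions, while along the normal direction the orthogonality $[u]\cdot \nu_u = 0$ (respectively $\eta\cdot \xi = 0$) kills the leading term. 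A diagonal procedure over the covering then produces the required sequence $\rho_k$, and combined with the $\rho$-independence of $D[u]$ this forces $\langle D[u],\varphi\rangle = 0$ for arbitrary $\varphi$, hence $|D[u]|(K)=0$ on every compact $K\subset\Omega\times(0,T)$.
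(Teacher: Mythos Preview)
Your strategy diverges from the paper's, and the divergence is where the gap lies. The paper never attempts to identify the family $\mu_z$; it bypasses \eqref{DRI_formula} entirely and works directly at the level of the approximation. Starting from $|\langle D[u],\varphi\rangle|=\lim_\eps|\langle D_\eps[u],\varphi\rangle|$, it crudely bounds the cubic integrand by $\|u\|_{L^\infty}^2\,|\nabla\rho(z)\cdot\delta_{\eps z}u|/\eps$, applies the $BV$ increment estimate \eqref{BV_est_increment} to the scalar $v_z=u\cdot\nabla\rho(z)$, and obtains the pointwise inequality
\[
|D[u]|\ \le\ C\Big(\int_{B_1}|\nabla\rho(z)\cdot M_{x,t}z|\,dz\Big)\nu,
\]
with $\nu=|\nabla u|\,dt$ and $M_{x,t}$ the polar of $\nabla u$. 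Incompressibility enters only through $\mathrm{tr}\,M_{x,t}=0$, and the $(x,t)$-dependent infimum over $\rho$ is then handled by \cref{P:Local energy conservation} via Alberti's \cref{L:alberti}. No rank-one theorem, no $BV$ decomposition, no covering.

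Your route instead tries to compute $\mu_z$ explicitly on each piece of $Du=D^a u+D^j u+D^c u$. The heuristic for the jump part is sound, and your integration by parts correctly yields $-|[u]|^2([u]\cdot\nu_u)=0$ for \emph{every} $\rho$. But the passage from heuristic to proof is missing: you have not shown that the weak limit of the nonlinear quantity $\eps^{-1}\delta_{\eps z}u|\delta_{\eps z}u|^2$ actually \emph{splits} along the $BV$ decomposition of $u$; the three parts interact under the cubic nonlinearity and cannot be treated separately without a careful blow-up argument. For the Cantor part this is genuinely delicate, since rank-one tells you the polar of $D^c u$ but says nothing directly about the limiting behaviour of cubic increments; turning this into an exact formula for $\mu_z\llcorner|D^c u|$ would require substantial additional work that is not supplied.

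There is also an internal inconsistency. If your identification of $\mu_z$ were valid, your own computations show that both the jump and Cantor contributions to $\int\nabla\rho\cdot\langle\mu_z,\varphi\rangle\,dz$ vanish for \emph{every} kernel $\rho$, so no anisotropic optimization, no covering by small-oscillation patches, and no diagonal procedure would be needed at all. The fact that you still invoke Ambrosio's machinery suggests you sense that the identification is not exact, but you have not named or controlled the error terms. The paper's inequality-based route is what makes the optimization step both necessary and clean: one bounds rather than computes, and the trace-free condition suffices.
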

In the statement above we used the notation $|\mu|$ to denote the variation of a signed Radon measure $\mu$ (see \cref{S:tools}). Since from \cite{DR00} we know that the sequence $D_\eps [u]$ has a unique limit which does not depend on the kernel $\rho$ when $u$ solves the incompressible Euler system, we deduce local energy conservation for solutions to \eqref{E} in the class \eqref{assumpt_Linfty_BV}. 

The proof of \cref{T:cons_local_BV} is based on Ambrosio's anisotropic optimization of the kernel that has been introduced in \cite{Ambr04} to prove the renormalization property for weak solutions to the transport equations with $BV$ vector fields, 
and it heavily relies on the fact that $u$ is divergence-free. In particular, the same approach would fail, and it has to do so, in the compressible context. For instance, Burgers shocks 
 enjoy the same critical regularity $L^\infty\cap BV$, and they enjoy an analogous version of the formulas given in \cref{T:ons_crit}, while having non-trivial dissipation measure. Thus, the approach we use here is quite different than the usual ones, since they all simultaneously apply to both compressible and incompressible models, with the exception of \cite{Shv09} at the price of imposing geometric constraints on singularities. More precisely, as opposite as it is usually done in the context of fluid equations, \cref{T:cons_local_BV} does not follow from the strong $L^1_{\rm loc}$ convergence of the approximating energy fluxes, that would be $\lim_{\eps \rightarrow 0^+} \|D_\eps [u]\|_{L^1_{\rm loc}}=0$, but instead its proof shows that any weak limit (in the sense of measures) of $D_\eps [u]$ can be made arbitrarily small by optimizing on the choice of the kernel $\rho$. The incompressiblity of $u$ plays a key role in the optimization. Indeed, following Ambrosio's argument, we prove that
\begin{equation}\label{ambrosio_structure}
|D[u]|\ll \inf_{\rho\in \mathcal K}   \left( \int_{B_1} |\nabla \rho (z)\cdot M_{x,t} z|\,dz\right) \nu,
\end{equation}
for a positive measure $\nu$ and a matrix $M_{x,t}$ such that $\tr M_{x,t}=0$ for $\nu$-a.e. $(x,t)\in \Omega\times (0,T)$. The measure $\nu$ is basically given by the variation of the gradient of $u$, while the matrix $M_{x,t}$ is the polar part of the Radon-Nikodym decomposition of $\nabla u$ with respect to $|\nabla u|$, which then has to be trace-free by the constraint $\div u=0$. Then, Alberti's lemma (see for instance \cite{Cr09}*{Lemma 2.13}) concludes the proof by showing that 
$$
\inf_{\rho\in \mathcal K}    \int_{B_1} |\nabla \rho (z)\cdot M z|\,dz=|\tr M|.
$$
Strictly speaking, here we use a variation of Alberti's \cref{L:alberti}. Indeed, instead of $Mz$, we consider any odd (or even), Lipschitz continuous, divergence-free vector field. This shows that having the linear structure $Mz$ is not necessary to achieve zero in the infimum, but any incompressible vector field would suffice. We believe this generalization to be useful to investigate further Onsager's critical classes in which the increments $\delta_z u$, when measured in an integral sense, may not behave linearly in $z$ as it happens in the $BV$ case, see \eqref{BV_est_increment}.

Although there are different approximations for $D[u]$, the one proposed by Duchon and Robert \eqref{DR_eps_formula} seems to be necessary in order to achieve the structure \eqref{ambrosio_structure}, as opposite to the one following the Constantin, E and Titi approach \cite{CET94}. Details are given in \cref{R:DR_vs_CET}.

It is important to emphasise that, even if \cref{T:cons_local_BV} does not require any a priori \textquotedblleft organization of singularities", $BV$ functions roughly behave, at small scales, like codimension-$1$ singularities on rectifiable sets. This is a consequence of the celebrated Rank-one theorem by Alberti \cite{Al93}, which shows that the polar matrix of the singular part of the distributional gradient $\nabla u$ has rank $1$ almost everywhere with respect to its variation. However, both the fact that rectifiable sets might not look \textquotedblleft organized" at large scales and the matrix-valued measure $\nabla u$ having in general a non-trivial Cantor part (see for instance \cite{AFP00}*{Section 3.9}), make the above intuition a bit looser, at least globally.

Having established interior local energy conservation, we also address the case in which a physical boundary $\partial \Omega$ is present.  In this context we define the \emph{total} kinetic energy by 
$$
e_u(t):=\frac{1}{2}\int_{\Omega}|u(x,t)|^2\,dx.
$$
As previously noted in \cites{BTW19,DN18}, if $D[u]\equiv 0$ in $\mathcal{D}'_{x,t}$, then by \eqref{local_energy_eq} kinetic energy conservation, for instance in the sense of \cref{D:kinetic energy cons}, follows as soon as the quantity
$$
\int_0^T \int_\Omega \left|\left(\frac{|u|^2}{2}+p \right) u\cdot \nabla \varphi \right| \,dx \,dt
$$
vanishes when we let $\varphi\in C^\infty_c(\Omega)$ converge to the indicator function of $\Omega$. Assuming that $P:=\frac{|u|^2}{2}+p \in L^\infty_{x,t}$, this amounts to 
\begin{equation}\label{vanish_boundary_normal_flux_intro}
\liminf_{k\rightarrow \infty}\int_0^T\int_\Omega |u\cdot \nabla \varphi_k|\,dx\,dt=0, \qquad \text{if } \varphi_k\rightarrow \mathds{1}_{\Omega}.
\end{equation}
As emphasised in \cites{BTW19,DN18}, an assumption  on the normal component of $u$ when approaching $\partial\Omega$ is enough to have \eqref{vanish_boundary_normal_flux_intro}. For instance, the continuity of the normal velocity in a neighbourhood of the boundary has been shown to be sufficient, consistently with the possible formation of a Prandtl-type boundary layer in the vanishing viscosity limit. To this end, we give a notion of “normal Lebesgue trace" on $\partial \Omega$ (see \cref{D:Leb normal trace}), which we will denote by $u^{\partial \Omega}_n$. It turns out, see \cref{P:normal trace absol converg}, that having such a notion of normal boundary trace vanishing on a portion of $\partial\Omega$ of $\mathcal H^{d-1}$-full measure is basically equivalent to \eqref{vanish_boundary_normal_flux_intro}. Differently from the previous works \cites{BTW19,DN18} where a $C^2$ assumption on $\partial\Omega$ was required, our analysis applies to any domain with Lipschitz boundary. In particular, when $\partial \Omega$ is piece-wise $C^2$, our approach shows that no assumption on the behaviour of the normal component of $u$ around boundary corner points is required, since they would form a $\mathcal{H}^{d-1}$-negligible subset of $\partial \Omega$.

For any set $A\subset \R^d$ we will denote by $(A)_\eps$ its $\eps$ tubular neighbourhood (see the formula \eqref{eps_tub_neig_def}). 
\begin{theorem}[Energy conservation on bounded domains]\label{T:energy cons bounded dom intro}
Let $u$ be a solution to \eqref{E} on a Lipschitz bounded domain $\Omega\subset \R^d$ such that 
\begin{itemize}
    \item[(i)] $u\in L^1(I;BV (O))\cap L^\infty(I;L^\infty(\Omega))$ and $p\in L^1(O\times I)$, for all $I\subset \joinrel\subset (0,T)$ and $O\subset\joinrel\subset \Omega$ open;
    \item[(ii)] for all $I\subset\joinrel \subset (0,T)$ there exists $\eps_0>0$ such that $p\in L^1(I;L^\infty((\partial \Omega)_{\eps_0}\cap \Omega))$;
    \item[(iii)]  for almost every $t\in (0,T)$, $u(\cdot,t)$ has zero Lebesgue normal boundary trace according to \cref{D:Leb normal trace}, i.e. $u(\cdot,t)^{\partial\Omega}_n\equiv 0$.
\end{itemize}
Then $u$ conserves the total kinetic energy in the sense of \cref{D:kinetic energy cons}.
\end{theorem}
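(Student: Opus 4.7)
My plan is to combine the interior conservation granted by \cref{T:cons_local_BV} with a careful cutoff argument near the boundary that exploits the normal Lebesgue trace condition (iii). Since $u$ solves \eqref{E}, Duchon--Robert \cite{DR00} ensure that the limit measure $D[u]$ in \eqref{local_energy_eq} is independent of $\rho\in\mathcal K$, so assumption (i) puts $u$ exactly in the hypotheses of \cref{T:cons_local_BV}, which gives $|D[u]|(K)=0$ for every compact $K\subset\Omega\times(0,T)$. Writing $P:=\tfrac{|u|^2}{2}+p$, the local energy equation thus reduces to
\begin{equation*}
\partial_t\!\left(\tfrac{|u|^2}{2}\right)+\div(P\,u)=0\qquad\text{in }\mathcal D'(\Omega\times(0,T)).
\end{equation*}

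Next I would fix $\psi\in C^\infty_c(0,T)$ and choose a standard Lipschitz cutoff sequence $\varphi_k\in C^\infty_c(\Omega)$ with $\varphi_k\equiv 1$ on $\Omega\setminus(\partial\Omega)_{1/k}$, $\supp\nabla\varphi_k\subset(\partial\Omega)_{1/k}\cap\Omega$, and $|\nabla\varphi_k|\lesssim k$. The Lipschitz regularity of $\partial\Omega$ yields $|\Omega\cap(\partial\Omega)_{1/k}|\lesssim 1/k$, hence $\|\nabla\varphi_k\|_{L^1(\Omega)}\lesssim 1$ uniformly in $k$. Testing the distributional identity against $\psi(t)\varphi_k(x)$ gives
\begin{equation*}
-\int_0^T\!\psi'(t)\!\int_\Omega \tfrac{|u|^2}{2}\,\varphi_k\,dx\,dt
=\int_0^T\!\psi(t)\!\int_\Omega P\,u\cdot\nabla\varphi_k\,dx\,dt.
\end{equation*}
The left-hand side converges, by dominated convergence (dominated by $\tfrac12\|u\|_{L^\infty_{x,t}}^{2}|\Omega|\,\|\psi'\|_\infty$), to $-\int_0^T\psi'(t)\,e_u(t)\,dt$, which is precisely the quantity identified with the distributional derivative of $e_u$ in \cref{D:kinetic energy cons}.

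To conclude, I would show that the right-hand side vanishes in the limit. For $k$ so large that $1/k<\eps_0$, assumption (ii) permits the estimate
\begin{equation*}
\left|\int_0^T\!\psi(t)\!\int_\Omega P\,u\cdot\nabla\varphi_k\,dx\,dt\right|
\le \|\psi\|_\infty\!\int_0^T\!\!\Bigl(\tfrac12\|u\|_{L^\infty_x}^{2}+\|p(\cdot,t)\|_{L^\infty((\partial\Omega)_{\eps_0}\cap\Omega)}\Bigr)\!\int_\Omega|u\cdot\nabla\varphi_k|\,dx\,dt.
\end{equation*}
The $t$-prefactor lies in $L^1(0,T)$ by (i) and (ii). Assumption (iii), together with \cref{P:normal trace absol converg}, forces $\int_\Omega|u(\cdot,t)\cdot\nabla\varphi_k|\,dx\to 0$ for a.e.\ $t\in(0,T)$, while the same inner integrals are uniformly bounded by $\|u\|_{L^\infty_{x,t}}\|\nabla\varphi_k\|_{L^1(\Omega)}\lesssim\|u\|_{L^\infty_{x,t}}$. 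A final dominated convergence in $t$ then yields the vanishing of the boundary flux and hence energy conservation.

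The genuinely delicate point is the one handled by \cref{P:normal trace absol converg}: converting the pointwise $\mathcal H^{d-1}$-a.e.\ vanishing of the normal Lebesgue trace on an arbitrary Lipschitz boundary into the $L^1_x$-vanishing of $u\cdot\nabla\varphi_k$ for a generic admissible cutoff. Unlike the $C^2$ case treated in \cites{BTW19,DN18}, here one cannot rely on a clean coarea foliation of $(\partial\Omega)_\eps$, and the inner normal is only defined $\mathcal H^{d-1}$-a.e.; accepting that black box, the present proof is a mechanical dominated-convergence argument built on top of \cref{T:cons_local_BV}.
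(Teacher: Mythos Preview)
Your overall strategy is exactly the paper's: kill $D[u]$ in the interior via \cref{T:cons_local_BV}, then test the local energy balance against $\psi(t)\varphi(x)$ and let the spatial cutoff approach $\mathds{1}_\Omega$, using assumption (ii) to factor out an $L^1_t$ weight and assumption (iii), through \cref{P:normal trace absol converg}, to make the boundary flux vanish. The dominated convergence you invoke in $t$ is fine (the paper uses Fatou, but your uniform bound $\int_\Omega|u\cdot\nabla\varphi_k|\,dx\lesssim\|u\|_{L^\infty_{x,t}}$ indeed allows dominated convergence).

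There is, however, one genuine gap in the execution. \cref{P:normal trace absol converg} is stated and proved only for the \emph{specific} cutoff $\varphi_\eps$ built from the distance function, namely $\varphi_\eps=\min\{1,\eps^{-1}d_{\partial\Omega}\}$, so that $\nabla\varphi_\eps=\eps^{-1}\nabla d_{\partial\Omega}$ on the boundary strip. For your ``standard'' smooth cutoff $\varphi_k$, the gradient $\nabla\varphi_k$ may have a large tangential component on $(\partial\Omega)_{1/k}\cap\Omega$, and since no smallness of the tangential part of $u$ is assumed, there is no reason for $\int_\Omega|u\cdot\nabla\varphi_k|\,dx$ to vanish; zero normal Lebesgue trace controls only $u\cdot\nabla d_{\partial\Omega}$. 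The fix is immediate: use precisely the $\varphi_\eps$ of \eqref{eq:cut_off}. Since this $\varphi_\eps$ is only Lipschitz, you first test the distributional identity with smooth $\varphi\in C^\infty_c(\Omega)$ satisfying $\varphi\equiv 1$ on $\Omega^{\eps_0}$, then pass by density to $\varphi\in W^{1,1}_0(\Omega)$ (legitimate because $u\in L^\infty_{x,t}$), and finally choose $\varphi=\varphi_\eps$. This is exactly how the paper proceeds; once you make this choice, \cref{P:normal trace absol converg} applies verbatim and your argument goes through.
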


Some more effective conditions which imply $u^{\partial\Omega}_n\equiv 0$ are given in \cref{P:normal trace is zero}. We emphasise that, differently from \cites{BTW19,DN18}, our theorem provides kinetic energy conservation in an Onsager's critical class. Working on general Lipschitz domains introduces several technicalities in the proof, such as the study of the Minkowski content of closed rectifiable sets, the regularity of the distance function from a Lipschitz domain and covering arguments. We collect them in \cref{S:tools}. To conclude, in \cref{T:en cons bounded domain more general} we also state a result in which the Bernoulli's pressure $P=\frac{|u|^2}{2}+p$ is not necessarily bounded in a neighbourhood of the boundary. Thus $P$ is allowed to blow-up approaching $\partial\Omega$, provided that the normal Lebesgue boundary trace is achieved in a quantitative (in terms on how $P$ blows-up) fast enough way.

\subsection{Organization of the paper} In \cref{S:tools} we collect the technical results needed in the paper, such as definitions and basic properties of Besov, $BV$ and $BD$ spaces, properties of the distance functions from Lipschitz sets, Minkowski content of closed rectifiable sets and Ambrosio--Alberti types lemmas. \cref{S:DR_formula} is devoted to the proof, and the analysis of some consequences, of \cref{T:ons_crit}. In \cref{S:BV_cons} we discuss the proof of \cref{T:cons_local_BV}. Finally, in \cref{S:boundary} we introduce the notion of “normal Lebesgue trace" and we prove \cref{T:energy cons bounded dom intro}, together with its generalized version \cref{T:en cons bounded domain more general}. In \cref{P:normal trace is zero} we discuss some effective conditions to have vanishing normal Lebesgue boundary trace, and thus energy conservation on bounded domains.

\section{Notations and technical tools}\label{S:tools}

In this section, we recall the main definitions and list the tools used throughout the manuscript. Here $\Omega$ is any open set in $\R^d$. We will specify when we will restrict to consider bounded domains only. Moreover, we use the standard notation $v\cdot w$ to denote the Euclidean scalar product between the two vectors $v$ and $w$.

\subsection{Radon measures}  We denote by $\mathcal{M}_{\rm loc}(\Omega;\R^m)$ the space of Radon measures with values in $\R^m$, that is Borel measures, finite on compact subsets of $\Omega$. When $m =1$, we use the shorter notation $\mathcal{M}_{\rm loc}(\Omega)$. Moreover, for a vector-valued measure $\mu$ we denote by $|\mu|\in \mathcal{M}_{\rm loc}(\Omega)$ its variation, namely the positive (scalar-valued) measure defined as
$$
\langle |\mu|,\varphi\rangle:=\int_{\Omega} \varphi \,d|\mu|:= \sup_{\Phi \in C^0(\Omega;\R^m),|\Phi|\le \varphi}\int_\Omega \Phi\cdot \,d\mu\qquad \forall \varphi\in C^0_c(\Omega), \, \varphi \geq 0.
$$
We denote by $\mathcal{M}(\Omega;\R^m)$ the space of finite vector-valued Radon measures on $\Omega$, i.e. Radon vector valued-measures on $\Omega$ such that $|\mu|(\Omega)<\infty$. $\mathcal{M}(\Omega; \R^m)$ is a Banach space, with the norm 
 $$\|\mu\|_{\mathcal{M}(\Omega)}:= |\mu|(\Omega).
 $$
Then, the weak convergence in $\mathcal{M}_{\rm loc}(\Omega;\R^m)$ of $\mu_k$ to $\mu $ is given by
\[
\mu_k \rightharpoonup \mu  \quad \Leftrightarrow\quad \langle \mu_k,\Phi\rangle \to \langle \mu,\Phi\rangle, \, \, \forall \Phi \in C^0_c(\Omega;\R^m).
\]
Since $\mathcal{M}_{\rm loc}(\Omega;\R^m)$ is the dual of $C^0_c(\Omega;\R^m)$ that is a separable space, we have sequential weak-star compactness for equi-bounded sequences. See for instance \cite{EG15}.

We denote by $\mu\llcorner C$ the restriction of the measure $\mu$ to the set $C$, that is $\mu\llcorner C (A):=\mu(A\cap C)$ for every Borel set $A$.

\subsection{Besov, bounded variation and bounded deformation spaces}
For $\theta\in (0,1)$ and $p\in [1,\infty)$ we denote by $B^\theta_{p,\infty}(\R^d)$ the space of $L^p(\R^d)$ functions, with 
\begin{equation}\label{besov_norm}
[f]_{B^\theta_{p,\infty}(\R^d)}:=\sup_{|h|>0} \frac{\|f (\cdot + h)-f(\cdot)\|_{L^p(\R^d)}}{|h|^\theta}<\infty.
\end{equation}
The full norm is then given by 
$$
\|f\|_{B^\theta_{p,\infty} (\R^d)}:=\|f\|_{L^p(\R^d)}+ [f]_{B^\theta_{p,\infty}(\R^d)}.
$$

With a slight abuse of notation, we define Besov functions on a bounded domain $\Omega$ as follows: for any $O\subset \joinrel \subset \Omega$, define $B^\theta_{p,\infty}(O)$ by replacing $\R^d$  in \eqref{besov_norm} with $O$, and compute the supremum all over $|h|\leq \dist(\overline O,\partial \Omega)$, so that $f(x+h)$ is well defined. For $p=\infty$ we identify $B^\theta_{\infty,\infty}(\Omega)$ with $C^\theta(\Omega)$, the space of H\"older continuous functions. 

We denote by $BV(\Omega)$ the space of functions with bounded variations, that is functions $f$ whose distributional gradient is a finite Radon measure on $\Omega$, i.e.
$$
BV(\Omega):=\left\{f\in L^1(\Omega)\,:\, \nabla f\in \mathcal{M}(\Omega;\R^d) \right\}.
$$
The latter definition generalises to vector fields $f:\Omega\rightarrow \R^d$. The space of functions with bounded deformation is given by vector fields whose symmetric gradient is a finite measure, i.e.
$$
BD(\Omega;\R^d):=\left\{f\in L^1(\Omega;\R^d)\,:\, \frac{\nabla f+\nabla f^T}{2}\in \mathcal{M}(\Omega;\R^{d\times d}) \right\}.
$$
Throughout the paper, we will denote the symmetric part of the gradient by 
$$
\nabla^s f:=\frac{\nabla f + \nabla f^T}{2}. 
$$

Recall from \cite{AFP00}*{Theorem 3.87} that functions in $BV(\Omega)$ admit a notion of trace, on any domain $\Omega$ with Lipschitz boundary.
\begin{theorem}[Boundary trace]\label{T:trace_in_BV}
Let $\Omega\subset \R^d$ be an open set with bounded Lipschitz boundary and $f\in BV(\Omega;\R^m)$. There exists a function $f^\Omega\in L^1(\partial \Omega;\mathcal H^{d-1})$ such that 
$$
 \lim_{r\rightarrow 0^+}\frac{1}{r^d} \int_{B_r (x)\cap\Omega} \left|f(y)- f^\Omega (x)\right|\,dy=0\qquad \text{for } \mathcal{H}^{d-1}\text{-a.e. } x\in \partial \Omega.
$$
Moreover, the extension $\tilde f$ of $f$ to zero outside $\Omega$ belongs to $BV(\R^d;\R^m)$ and
$$
\nabla \tilde f=(\nabla f)\llcorner\Omega + (f^\Omega\otimes n) \mathcal{H}^{d-1} \llcorner\partial \Omega,
$$
being $n:\partial \Omega\rightarrow \mathbb{S}^{d-1}$ the inward unit normal.
\end{theorem}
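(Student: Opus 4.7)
The plan is to reduce to a local flat half-space situation. Since $\partial\Omega$ is a bounded Lipschitz hypersurface, it admits a finite covering by open sets $U_i$ in each of which, after a rigid motion, $\partial\Omega$ is the graph of a Lipschitz function $\gamma_i:\R^{d-1}\to\R$ and $\Omega\cap U_i = \{(x',x_d): x_d > \gamma_i(x')\}\cap U_i$. Using a bi-Lipschitz flattening map $\Psi_i(x',x_d) = (x', x_d-\gamma_i(x'))$ and the fact that $BV$ is stable under bi-Lipschitz change of variables, I reduce to the case $\Omega = \R^{d-1}\times(0,\infty)$. A partition of unity subordinate to $\{U_i\}$ will then globalize both the definition of the trace and the extension formula.

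In the half-space, I use Fubini and the slicing theory of BV functions: for $\mathcal{H}^{d-1}$-a.e.\ $x'\in\R^{d-1}$ the slice $t\mapsto g_{x'}(t):=f(x',t)$ belongs to $BV_{\rm loc}((0,\infty);\R^m)$ with $\int_{\R^{d-1}}|Dg_{x'}|((0,\infty))\,dx' \leq |\nabla f|(\Omega)$. Since one-dimensional $BV$ functions have right limits at every point, I can define $f^\Omega(x') := \lim_{t\to 0^+} g_{x'}(t)$ almost everywhere, and the slicing bound together with Fubini shows $f^\Omega\in L^1(\R^{d-1})$. To upgrade the pointwise limit to the Lebesgue-type trace, I combine the one-dimensional fundamental theorem $|g_{x'}(t)-f^\Omega(x')| \leq |Dg_{x'}|((0,t))$ with Fubini to obtain
\begin{equation*}
\int_{\R^{d-1}} \, \frac{1}{r} \int_0^r |f(x',t)-f^\Omega(x')|\,dt\,dx' \leq \frac{1}{r}\int_0^r |\nabla f|(\R^{d-1}\times(0,t))\,dt \to 0,
\end{equation*}
and then average in the tangential direction using the continuity of translation in $L^1$ applied to $f^\Omega$, plus the elementary bound $\|f(\cdot+h',\cdot)-f(\cdot,\cdot)\|_{L^1(\R^{d-1}\times(0,r))}\leq |h'|\,|\nabla f|(\Omega)$, to obtain the two-variable Lebesgue limit for $\mathcal{H}^{d-1}$-a.e.\ $x'$.

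For the extension formula, I take $\varphi\in C^1_c(\R^d;\R^m)$ and integrate by parts on the inner approximation $\Omega_\eps := \{x\in\Omega: \dist(x,\partial\Omega)>\eps\}$, which has Lipschitz boundary converging to $\partial\Omega$ in the sense of Hausdorff measures. For the smooth mollification $f^\delta$ the classical divergence theorem gives
\begin{equation*}
\int_{\Omega_\eps} f^\delta \cdot \nabla\varphi\, dx = -\int_{\Omega_\eps} \varphi\cdot d(\nabla f^\delta) + \int_{\partial\Omega_\eps} f^\delta\otimes n_\eps : \varphi\,d\mathcal{H}^{d-1}.
\end{equation*}
Sending first $\delta\to 0$ then $\eps\to 0$, the interior terms converge by the weak-$*$ convergence of $\nabla f^\delta$ to $\nabla f$ and by dominated convergence, while the boundary term converges to $\int_{\partial\Omega} f^\Omega\otimes n : \varphi\,d\mathcal{H}^{d-1}$ thanks precisely to the Lebesgue trace property, upon parametrizing $\partial\Omega_\eps$ via the nearest-point projection (which is defined $\mathcal{H}^{d-1}$-a.e.\ on $\partial\Omega_\eps$ for Lipschitz domains for small enough $\eps$). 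Reading off the result in the distributional sense gives $\nabla\tilde f = (\nabla f)\llcorner\Omega + (f^\Omega\otimes n)\mathcal{H}^{d-1}\llcorner\partial\Omega$, which in particular lies in $\mathcal{M}(\R^d;\R^{d\times m})$.

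The main obstacle is the passage to the limit on $\partial\Omega_\eps$ in the boundary integral: for merely Lipschitz domains, the unit normals $n_\eps$ on $\partial\Omega_\eps$ are not continuous in $\eps$, and the nearest-point projection $\pi:\partial\Omega_\eps\to\partial\Omega$ is only defined almost everywhere. Controlling this convergence requires combining the regularity of the distance function from a Lipschitz domain with a Minkowski-content type estimate identifying $\mathcal{H}^{d-1}\llcorner\partial\Omega_\eps\circ\pi^{-1}$ with $\mathcal{H}^{d-1}\llcorner\partial\Omega$ in the limit. These are exactly the geometric-measure-theoretic tools the authors prepare in Section~\ref{S:tools} for the boundary analysis of Theorem~\ref{T:energy cons bounded dom intro}.
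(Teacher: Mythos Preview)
The paper does not prove this theorem; it is quoted from \cite{AFP00}*{Theorem 3.87} as a background result. Your outline for the existence and $L^1$-summability of the trace via local bi-Lipschitz flattening and one-dimensional slicing is the standard textbook argument and is essentially correct.

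For the extension formula, however, you manufacture an unnecessary difficulty by returning to the curved domain and approximating from inside by $\Omega_\eps=\{d_{\partial\Omega}>\eps\}$. Since you have already localised and flattened to the half-space $\{x_d>0\}$, the natural inner approximation is $\{x_d>\eps\}$, whose boundary is the hyperplane $\{x_d=\eps\}$ with constant unit normal $e_d$. Integration by parts for the mollified $f^\delta$ there produces the boundary term $\int_{\R^{d-1}} f^\delta(x',\eps)\,\varphi(x',\eps)\,dx'$, and the double limit $\delta\to 0$, $\eps\to 0$ follows directly from your slicing estimate $\int_{\R^{d-1}}|f(x',\eps)-f^\Omega(x')|\,dx'\to 0$. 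No nearest-point projection and no control of the normals $n_\eps$ are required; the bi-Lipschitz change of variables then transports the identity back to the curved patch, and the partition of unity globalises it. Your ``main obstacle'' is therefore self-inflicted by abandoning the flattened picture too early. Incidentally, the tools you invoke from Section~\ref{S:tools} (\cref{T:dist function and normal}, \cref{P:federer minkowski rectifiable}) are developed for the energy-flux boundary analysis in Section~\ref{S:boundary}, not for this classical trace theorem, and play no role in its proof.
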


In the lemma below we make use of the notation 
\begin{equation}\label{eps_tub_neig_def}
(A)_\eps:=A+B_\eps(0)
\end{equation}
for the $\eps$ tubular neighbourhood of a Borel set $A \subset\joinrel\subset \Omega$, with the implicit restriction $2\eps<\dist ( A, \partial \Omega)$ whenever $\Omega$ has a boundary. 

\begin{lemma}[$BV$ and $BD$ increment estimates]\label{L:BD_gradient}
Let $\Omega \subset \R^d$, $d\geq 2$. Suppose that $f:\Omega\rightarrow \R^d$ belongs to $BD_{\rm loc}(\Omega;\R^d)$. For any $A\subset\joinrel\subset \Omega$ Borel set, and any  $z\in B_1$, we have
\begin{equation}\label{BD_est_increment}
\int_A | z\cdot (f(x+\eps z)-f(x))|\,dx\leq \eps |z|^2 |\nabla^s f|\left( \overline{(A)_\eps}\right).
\end{equation}
Moreover, if $f\in BV_{\rm loc}(\Omega;\R^m)$, $m\geq 1$,  we have
\begin{equation}\label{BV_est_increment}
\int_A | f(x+\eps z)-f(x)|\,dx\leq \eps |z\cdot\nabla f|\left( \overline{(A)_\eps}\right),
\end{equation}
where $z\cdot \nabla f$ is the $m$-dimensional measure, whose $m$-th component is given by $(z\cdot \nabla f)_m:=z_i \partial_i f_m$, summing over repeated indexes as usual.
\end{lemma}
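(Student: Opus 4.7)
The plan is to reduce both inequalities to the smooth case by mollification and then invoke the fundamental theorem of calculus. First, assume $f \in C^\infty(\Omega; \R^d)$. For the BV statement, applied componentwise, write
\[
f(x+\eps z) - f(x) = \eps \int_0^1 \nabla f(x+t\eps z)\, z \,dt,
\]
take moduli, integrate in $x$ over $A$, swap the two integrations by Fubini, and perform the volume-preserving translation $y = x + t\eps z$. Since $t\eps z \in \overline{B_\eps}$ for all $t \in [0,1]$ and $|z| \leq 1$, the translated set $A + t\eps z$ sits inside $\overline{(A)_\eps}$, and one obtains \eqref{BV_est_increment} in the smooth case with $|z \cdot \nabla f|$ read as the $L^1$ density $|z \cdot \nabla f(y)|$.

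For the BD inequality the same identity, contracted with $z$, gives
\[
z \cdot (f(x+\eps z) - f(x)) = \eps \int_0^1 z^T \nabla f(x+t\eps z)\, z\, dt = \eps \int_0^1 z^T \nabla^s f(x+t\eps z)\, z\, dt,
\]
where the second equality uses that the antisymmetric part of $\nabla f$ vanishes when contracted twice with the same vector $z$. Bounding pointwise $|z^T M z| \leq |z|^2 |M|$ and proceeding as in the BV case yields \eqref{BD_est_increment} in the smooth setting, with the crucial factor $|z|^2$ appearing naturally from this quadratic contraction.

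To pass from smooth to general $f$, I would convolve with a standard mollifier $\rho_\delta \in C^\infty_c(B_\delta)$, obtaining $f^\delta := f*\rho_\delta$. Because $z \cdot \nabla f^\delta = (z \cdot \nabla f) * \rho_\delta$ and $\nabla^s f^\delta = (\nabla^s f) * \rho_\delta$ (viewing the convolution of a measure with a smooth kernel as an $L^1_{\rm loc}$ function), the smooth bounds applied to $f^\delta$, together with the standard total-variation estimate $\int_B |\mu * \rho_\delta|\,dx \leq |\mu|((B)_\delta)$, yield
\[
\int_A |f^\delta(x+\eps z) - f^\delta(x)|\,dx \leq \eps \,\bigl|z \cdot \nabla f\bigr|\bigl((\overline{(A)_\eps})_\delta\bigr),
\]
and the analogous bound for the BD inequality. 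Letting $\delta \to 0^+$, the left-hand side converges by strong $L^1_{\rm loc}$ convergence $f^\delta \to f$ (the geometric setup $A \subset\joinrel\subset \Omega$ with $2\eps < \dist(A, \partial \Omega)$ leaves room for both the translation and the mollification), while the right-hand side converges to the desired $|z \cdot \nabla f|(\overline{(A)_\eps})$ by continuity from above of Radon measures on the decreasing family $(\overline{(A)_\eps})_\delta$.

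I do not foresee a serious obstacle: the scheme is entirely classical. The only mildly delicate step is tracking the closure in the right-hand side, resolved by the identity $\bigcap_{\delta > 0}(\overline{(A)_\eps})_\delta = \overline{(A)_\eps}$. The content of the lemma, beyond routine manipulations, is the algebraic observation that only the symmetric part of $\nabla f$ contributes to $z \cdot (f(x+\eps z) - f(x))$ at first order; this is precisely what allows BD regularity to suffice in \eqref{BD_est_increment}, despite $f$ not being BV.
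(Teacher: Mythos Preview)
Your proposal is correct and follows essentially the same route as the paper: reduce to the smooth case via the fundamental theorem of calculus (using that $z\otimes z$ kills the antisymmetric part of $\nabla f$ for the $BD$ bound), then mollify and pass to the limit through the identity $\bigcap_{\delta>0}((A)_\eps)_\delta=\overline{(A)_\eps}$. The only cosmetic difference is that the paper writes the smooth-case right-hand side over the open neighbourhood $(A)_\eps$ before mollifying, whereas you work with $\overline{(A)_\eps}$ throughout; both lead to the same limiting inequality.
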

\begin{proof}
We start by proving the validity of \eqref{BD_est_increment} and \eqref{BV_est_increment} for smooth functions. The general case follows by density.

Let $f:\Omega\rightarrow \R^d$ be smooth, $A\subset\joinrel\subset \Omega$ and $2\eps<\dist ( A, \partial \Omega)$. For any $x\in A$ and $z\in B_1$ we have
\begin{align*}
    z_i(f_i(x+\eps z)-f_i(x))&=z_i\int_0^1 \frac{d}{dt}f_i(x+t\eps z)\,dt=\eps z_iz_j\int_0^1 (\partial_j f_i)(x+t\eps z) \,dt\\
    &=\eps \int_0^1 z\otimes z : (\nabla f)(x+t\eps z)\,dt=\eps \int_0^1 z\otimes z :(\nabla^s f)(x+t\eps z)\,dt.
\end{align*}
Thus
$$
\int_A | z\cdot (f(x+\eps z)-f(x))|\,dx\leq \eps |z|^2 \int_0^1 \int_A |\nabla^s f|(x+t\eps z)\,dx\,dt\leq \eps |z|^2 |\nabla^s f|\left( (A)_\eps\right).
$$
If $f\in BD_{\rm loc}(\Omega)$, we consider its mollification $f_\delta:=f*\rho_\delta$, for any $2\delta<\dist ( A,\partial\Omega)$. Since $f_\delta$ is smooth, by writing \eqref{BD_est_increment} for $f_\delta$ we have
$$
\int_A | z\cdot (f_\delta(x+\eps z)-f_\delta(x))|\,dx\leq  \eps |z|^2 |\nabla^s f_\delta|\left( (A)_\eps\right)\leq \eps |z|^2|\nabla^s f|\left( ((A)_\eps)_\delta\right).
$$
Since $f_\delta\rightarrow f$ strongly in $L^1_{\rm loc}(\Omega)$ and $\bigcap_{\delta>0} ((A)_\eps)_\delta=\overline{(A)_\eps}$, the latter inequality implies \eqref{BD_est_increment} for $f$, 

The proof of \eqref{BV_est_increment} follows by very similar considerations, by first showing that 
$$
\int_A | f(x+\eps z)-f(x)|\,dx\leq \eps |z\cdot\nabla f|\left( (A)_\eps\right),
$$
whenever $f$ is smooth, and then by considering the mollification $f_\delta$ in the general case.
\end{proof}

\subsection{Lipschitz sets and properties of the distance function}

For a bounded open Lipschitz set $\Omega \subset \R^d$, we denote by $d_{\partial\Omega}:\Omega\rightarrow [0,\infty)$ the interior distance to the boundary, i.e.  
$$
d_{\partial \Omega}(x):=\dist(x,\partial \Omega).
$$
The distance function $d_{\partial \Omega}$ satisfies the properties listed in the following classical lemma, which is proved for the reader's convenience.

\begin{lemma} [Distance function]\label{L:dist funct}
The function $d_{\partial \Omega}$ is Lipschitz, thus differentiable a.e. in $\Omega$, with $|\nabla d_{\partial \Omega} (x)|=1$ for a.e. $x\in \Omega$. Moreover, for almost every $x\in \Omega$, if $y\in \partial \Omega$ is any point such that $d_{\partial \Omega} (x)=|x-y|$, then 
\begin{equation}\label{gradient dist_expression}
     \nabla d_{\partial \Omega} (x)=\frac{x-y}{|x-y|}.
\end{equation}
\end{lemma}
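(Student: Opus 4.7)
The plan is to proceed in three steps: Lipschitz continuity, differentiability via Rademacher, and then a monotonicity argument along the segment from $x$ to a nearest boundary point.

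First I would establish that $d_{\partial \Omega}$ is $1$-Lipschitz. For any $x_1,x_2 \in \Omega$ and any $y \in \partial \Omega$, the triangle inequality gives $d_{\partial \Omega}(x_1) \leq |x_1-x_2| + |x_2-y|$. Taking the infimum over $y \in \partial \Omega$ and then swapping the roles of $x_1,x_2$ yields $|d_{\partial\Omega}(x_1)-d_{\partial\Omega}(x_2)| \leq |x_1-x_2|$. By Rademacher's theorem, $d_{\partial\Omega}$ is then differentiable at almost every $x\in \Omega$, with $|\nabla d_{\partial\Omega}(x)|\leq 1$.

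Next I would derive the explicit formula \eqref{gradient dist_expression} (which simultaneously gives $|\nabla d_{\partial\Omega}(x)|=1$). Fix a differentiability point $x\in \Omega$ and let $y\in \partial\Omega$ realise the distance, which exists since $\partial\Omega$ is closed and $x\notin\partial\Omega$. Set $v:=\tfrac{x-y}{|x-y|}$ and consider the segment $x_t:=x-tv$ for $t\in [0, d_{\partial\Omega}(x))$. Since $|x_t-y|=d_{\partial\Omega}(x)-t$, we have $d_{\partial\Omega}(x_t)\leq d_{\partial\Omega}(x)-t$; combining with the $1$-Lipschitz inequality $d_{\partial\Omega}(x)-d_{\partial\Omega}(x_t)\leq |x-x_t|=t$ forces equality $d_{\partial\Omega}(x_t) = d_{\partial\Omega}(x) - t$. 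Now using differentiability of $d_{\partial\Omega}$ at $x$,
\begin{equation*}
d_{\partial\Omega}(x_t)=d_{\partial\Omega}(x)-t\,\nabla d_{\partial\Omega}(x)\cdot v + o(t),
\end{equation*}
so dividing by $t$ and sending $t\to 0^+$ gives $\nabla d_{\partial\Omega}(x)\cdot v = 1$. Since $|\nabla d_{\partial\Omega}(x)|\leq 1$ and $|v|=1$, the Cauchy--Schwarz inequality is saturated and therefore $\nabla d_{\partial\Omega}(x)=v$, which both yields the formula and shows that the gradient has unit length.

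The only subtle point is that the statement asserts the formula for \emph{any} nearest point $y$, without presuming uniqueness. This is not really an obstacle: the monotonicity argument above does not require uniqueness, so it applies verbatim to each minimiser, and a posteriori forces all such minimisers to coincide (so uniqueness of the projection at differentiability points comes for free). I note that the argument uses nothing about $\Omega$ beyond $\partial\Omega$ being closed and nonempty; the Lipschitz regularity assumed in the preamble will be invoked elsewhere in the section rather than here.
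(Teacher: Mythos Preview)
Your proof is correct and essentially identical to the paper's: both establish $1$-Lipschitz continuity, invoke Rademacher, then use the linear decrease of $d_{\partial\Omega}$ along the segment from $x$ to a nearest point $y$ to obtain $\nabla d_{\partial\Omega}(x)\cdot\frac{x-y}{|x-y|}=1$, and conclude via the equality case of Cauchy--Schwarz. The only cosmetic difference is that the paper first deduces $|\nabla d_{\partial\Omega}|=1$ and then separately derives the formula, whereas you obtain both simultaneously; your remarks on uniqueness of the projection and on $\partial\Omega$ only needing to be closed also match the paper's comments.
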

It will be clear from the proof that \cref{L:dist funct} holds whenever one considers $d_C (x)=\dist (x,C)$, where $C\subset \R^d$ is a closed set.
\begin{proof}
    From the straightforward inequality 
    \begin{equation}
        \label{dist_is_lip}
        |d_{\partial \Omega} (x)-d_{\partial \Omega} (y)|\leq |x-y|,
    \end{equation}
    we deduce that $d_{\partial \Omega} \in \Lip (\Omega)$. Thus by Rademacher's theorem $d_{\partial \Omega}$ is differentiable a.e. in $\Omega$, with $|\nabla d_{\partial \Omega} |\leq 1$. Let $x\in \Omega$ be any point of differentiability. Since $\partial \Omega$ is closed and non-empty, we can find $y\in \partial \Omega$ such that $d_{\partial \Omega} (x)=|x-y|$. Define $f(t):=d_{\partial \Omega} (x+t(y-x))$, for $t\in [0,1]$. By \eqref{dist_is_lip} we have $f\in \Lip ([0,1])$ with $|f'(t)|\leq |y-x|$ for a.e. $t\in [0,1]$. In addition
    $$
    -|x-y|=-d_{\partial \Omega} (x)=f(1)-f(0)=\int_0^1 f'(t)\,dt,
    $$
which forces $f'(t)=-|x-y|$ for a.e. $t\in [0,1]$, since $-f'\leq |y-x|$. This gives
$$
d_{\partial \Omega} (x+t(y-x))=f(t)=f(0)-t|x-y|=d_{\partial \Omega} (x)-t|x-y|,
$$
and since we assumed $x$ to be a differentiability point for $d_{\partial \Omega}$, by letting $t\rightarrow 0^+$, we deduce
\begin{equation}
    \label{differential distance}
    (y-x)\cdot \nabla d_{\partial \Omega} (x)=-|y-x|.
\end{equation}
By Cauchy-Schwartz we get $1\leq |\nabla d_{\partial \Omega} (x)|$, which together with \eqref{dist_is_lip} proves $|\nabla d_{\partial \Omega}|=1$ a.e. in $\Omega$. 

Thus, let $x$ be any point of differentiability such that $|\nabla d_{\partial \Omega}(x)|=1$. Then \eqref{differential distance}, again by Cauchy-Schwartz, implies that $(y-x)$ and $\nabla d_{\partial \Omega}(x)$ are parallel, giving \eqref{gradient dist_expression}.
\end{proof}

We state and prove a result which will be used in the proof of \cref{P:normal trace is zero} to show that any divergence-free vector field $u\in BV(\Omega;\R^d)$, tangent to the boundary, has zero normal Lebesgue trace according to \cref{D:Leb normal trace}. To the best of the authors knowledge the theorem below is not standard in the literature. We thank Camillo De Lellis for the argument of the proof.

\begin{theorem}
    [Distance function and normal vector]\label{T:dist function and normal}
    Let $\Omega\subset \R^d$ be a bounded and Lipschitz domain. Then 
    \begin{equation}
        \label{lebsgue_point_for_nabla_d}
        \lim_{r\rightarrow 0^+}\frac{1}{r^d}\int_{\Omega \cap B_r(x)} |\nabla d_{\partial \Omega}(y)-n(x)|\,dy=0\qquad \text{for } \mathcal H^{d-1}\text{-a.e. } x\in \partial \Omega,
    \end{equation}
    where $n:\partial \Omega\rightarrow \mathbb S^{d-1}$ denotes the inward unit normal to $\partial \Omega$.
\end{theorem}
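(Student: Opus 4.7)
The plan is a blow-up argument combined with a weak/strong compactness for $\nabla d_{\partial\Omega}$ under rescaling.

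First I would fix $x \in \partial\Omega$ to be any boundary point at which the inward normal $n(x)$ exists and, after an affine change of coordinates centered at $x$ with $n(x) = e_d$, $\partial\Omega$ is locally the graph $\{z_d = \varphi(z')\}$ of a Lipschitz function $\varphi \colon \R^{d-1} \to \R$ with $\varphi(0) = 0$, $\nabla \varphi(0) = 0$, and $\Omega$ locally coincides with $\{z_d > \varphi(z')\}$. By Rademacher's theorem applied to the defining Lipschitz graphs and the area formula, these properties hold for $\mathcal H^{d-1}$-a.e.\ $x \in \partial\Omega$. Rescaling via $z = (y-x)/r$, set $\Omega_r := (\Omega - x)/r$, $\varphi_r(z') := \varphi(rz')/r$, and $d_r(z) := d_{\partial\Omega}(x+rz)/r$. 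Then $d_r = d_{\partial\Omega_r}$, $d_r$ is $1$-Lipschitz with $|\nabla d_r| = 1$ a.e.\ on $\Omega_r$ by \cref{L:dist funct}, and by the change of variables the claim \eqref{lebsgue_point_for_nabla_d} is equivalent to
\begin{equation}\label{plan:rescaled}
\int_{\Omega_r \cap B_1} |\nabla d_r(z) - n(x)| \, dz \xrightarrow[r\to 0^+]{} 0.
\end{equation}

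Next I would verify two blow-up convergences. Since $\varphi$ is differentiable at $0$ with $\nabla\varphi(0)=0$, one has $\sup_{|z'|\le R} |\varphi_r(z')| = \sup_{|z'|\le R} |\varphi(rz')|/r \to 0$ for every $R>0$, so $\mathbf 1_{\Omega_r} \to \mathbf 1_{H_x}$ in $L^1_{\rm loc}(\R^d)$, where $H_x := \{z \cdot n(x) > 0\}$. Using this together with the equi-Lipschitz bound on $d_r$ and the fact that $d_r$ vanishes on $\partial\Omega_r$, a short Hausdorff-type comparison gives $d_r \to (z\cdot n(x))_+$ locally uniformly on $\R^d$.

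Then I would analyse the vector fields $v_r := \mathbf 1_{\Omega_r} \nabla d_r$ (extended by $0$ outside $\Omega_r$), which satisfy $|v_r| \le 1$. By Banach--Alaoglu, any subsequence admits a further subsequence with $v_r \rightharpoonup^\ast v$ in $L^\infty(B_1;\R^d)$. For $\psi \in C^\infty_c(B_1;\R^d)$, integration by parts on $\Omega_r$ (with no boundary contribution since $d_r|_{\partial\Omega_r}=0$) gives
\begin{equation*}
\int v_r \cdot \psi \, dz = -\int_{\Omega_r} d_r \, \div \psi \, dz \xrightarrow[r\to 0^+]{} -\int_{H_x} (z\cdot n(x)) \div \psi \, dz = \int_{H_x} n(x)\cdot \psi \, dz,
\end{equation*}
where the boundary term in the second integration by parts vanishes because $z \cdot n(x) = 0$ on $\partial H_x$. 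This identifies $v = \mathbf 1_{H_x} n(x)$ uniquely, so the whole family $v_r$ converges weak-$\ast$ to $\mathbf 1_{H_x} n(x)$ in $L^\infty(B_1)$.

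Finally I would upgrade weak to strong convergence. Since $|v_r(z)|^2 = \mathbf 1_{\Omega_r}(z)$ a.e., we have $\|v_r\|_{L^2(B_1)}^2 = |\Omega_r \cap B_1| \to |H_x \cap B_1| = \|v\|_{L^2(B_1)}^2$, and combined with weak convergence in $L^2(B_1)$ this forces $v_r \to v$ strongly in $L^2(B_1)$, hence in $L^1(B_1)$. Splitting
\begin{equation*}
\int_{\Omega_r \cap B_1} |\nabla d_r - n(x)| \, dz \le \int_{B_1} |v_r - v| \, dz + 2 \, |(\Omega_r \setminus H_x) \cap B_1|,
\end{equation*}
and using $|\Omega_r \triangle H_x|_{B_1} \to 0$, one obtains \eqref{plan:rescaled}. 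The technical heart of the argument, and the step I expect to require the most care, is the blow-up convergence $\mathbf 1_{\Omega_r} \to \mathbf 1_{H_x}$ and the uniform convergence $d_r \to (z\cdot n(x))_+$ at $\mathcal H^{d-1}$-a.e.\ boundary point; everything else is essentially a compensated-compactness-style matching of a weak limit with a limit of $L^2$ norms.
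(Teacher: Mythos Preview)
Your proof is correct and takes a genuinely different route from the paper's argument.

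The paper works pointwise and geometrically: it uses the explicit formula $\nabla d_{\partial\Omega}(y) = (y-q)/|y-q|$ for $q$ a nearest boundary point (from \cref{L:dist funct}), and shows by a direct contradiction argument that for $y \in B_r(x)$ outside a thin strip $\{|y_d| < \eta r\}$ the vector $(y-q)/|y-q|$ must be $\eps$-close to $e_d$. The strip itself has measure $\lesssim \eta r^d$, and the two contributions are balanced.

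Your approach is functional-analytic: after blow-up, you identify the weak-$\ast$ limit of $v_r = \mathbf 1_{\Omega_r}\nabla d_r$ via a single integration by parts (exploiting that $d_r$ vanishes on $\partial\Omega_r$, so the extended function is globally Lipschitz with distributional gradient $v_r$), and then upgrade to strong $L^2$ convergence by matching norms, using crucially that $|v_r|^2 = \mathbf 1_{\Omega_r}$ a.e. The final splitting is clean.

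Both proofs rest on the same qualitative input (differentiability of the local Lipschitz graph at $\mathcal H^{d-1}$-a.e.\ point, so that $\varphi_r \to 0$ uniformly on compacts). The paper's argument is more hands-on and yields an explicit decomposition of $B_r(x)$ into a ``good'' region where $\nabla d_{\partial\Omega}$ is uniformly close to $n(x)$ and a strip of small measure; your argument is softer and arguably more robust---the weak-limit-plus-norm-convergence mechanism would adapt to situations where the limit of $\nabla d_r$ is not constant. One small point worth making explicit in a write-up: to justify that for $z \in B_1$ the nearest point on $\partial\Omega_r$ lies on the local graph (so that the Hausdorff comparison applies), note that $0 \in \partial\Omega_r$ forces $d_r(z) \le |z| \le 1$, hence the nearest point stays in $B_2$, which for $r$ small is contained in the region where $\partial\Omega_r$ is the graph of $\varphi_r$.
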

Let us notice that when $\Omega$ is piece-wise $C^2$ there is a shorter and quite classical argument to prove \eqref{lebsgue_point_for_nabla_d}. Indeed in this case, up to a $\mathcal H^{d-1}$-negligible set, $\partial \Omega$ is locally $C^2$, and thus $d_{\partial \Omega}\in C^2$ (locally) by the classical theory (see for instance \cite{GT}*{Lemma 14.16}). Then, a straightforward computation shows that every point $x\in \partial\Omega$ in the corresponding local neighbourhood is a Lebesgue point for $\nabla d_{\partial\Omega}$. As we will see, the proof for general Lipschitz domains is more delicate. Differently from \cref{L:dist funct}, here the Lipschitz regularity of $\partial \Omega$ plays a crucial role.
\begin{proof}

For sake of clarity, let us divide the proof into steps.    
 
1. \emph{Rewriting of \eqref{lebsgue_point_for_nabla_d}.} Since $\Omega$ is a Lipschitz domain, for $\mathcal{H}^{d-1}$-a.e. point $p=(p_1,\dots,p_{d-1},p_d)=:(\overline p , p_d)\in \partial\Omega$, there exists $r>0$, a Lipschitz map $\psi$ and a coordinate system such that $\partial \Omega\cap B_r(p)$ is the graph of $\psi$, with $\overline p$ a point of differentiability for $\psi$. After rotating the coordinates and possibly taking a smaller radius $r'<r$, we can assume $\nabla \psi (\overline p)=0$. In such a case we define the interior normal to $\partial\Omega$ in $p=(\overline p, p_d)$ as $n(p)=(0,\dots,0,1)=e_d$. Moreover, we can further assume that $\overline p=0$ and $p_d=\psi (\overline p)=0$, that is $p=0$. Thus, in the above notation, we have to prove that 
\begin{equation}
    \label{rewriting thesis}
          \lim_{r\rightarrow 0^+}\frac{1}{r^d}\int_{\Omega \cap B_r(0)} |\nabla d_{\partial \Omega}(x)-e_d|\,dx=0.
\end{equation}

2. \emph{Main claim.} We claim that $\forall \eta,\eps>0$ there exists $r_0=r_0(\eta,\eps)>0$ such that the following property holds. If $r<r_0$, $x\in B_r(0)\cap \left\{ |x_d|\geq \eta r\right\}$ and $y\in \partial \Omega$ is such that $d_{\partial \Omega}(x)=|x-y|$, then 
$$
\left| \frac{x-y}{|x-y|}-e_d\right|< \eps.
$$
Let us show how the claim gives \eqref{rewriting thesis}. We postpone the proof of the claim soon after. 

3. \emph{Proof of \eqref{rewriting thesis} assuming the claim.} Fix $\eta=\eps>0$. From \cref{L:dist funct} together with the above claim we have 
\begin{align*}
    \int_{\Omega\cap B_r(0)} |\nabla d_{\partial \Omega} (x)-e_d|\,dx&\leq 2 \mathcal H^d\left(B_r(0)\cap \left\{|x_d|\leq \eps r \right\}\right)\\
    &+  \int_{ B_r(0)\cap\left\{|x_d|\geq \eps r \right\}}\left| \frac{x-y(x)}{|x-y(x)|}-e_d\right|\,dx\\
    &\leq C \eps r^d,
\end{align*}
where $y(x)$ is any point on $\partial \Omega$ such that $d_{\partial \Omega} (x)=|x-y(x)|$. This proves that 
$$
 \limsup_{r\rightarrow 0^+}\frac{1}{r^d}\int_{\Omega \cap B_r(0)} |\nabla d_{\partial \Omega}(x)-e_d|\,dx\leq C\eps,
$$
from which \eqref{rewriting thesis} directly follows by the arbitrariness of $\eps>0$.

4. \emph{Proof of the claim.} We run the proof by contradiction. If the conclusion would be false, we could find $\eps,\eta>0$ and $x_k,y_k, r_k$ such that
\begin{itemize}
    \item[$(i)$] $x_k=(\overline x_k, (x_k)_d)\in \Omega \cap B_{r_k}(0)$, with $r_k\rightarrow 0^+$ as $k\rightarrow\infty$;
    \item[$(ii)$] $|( x_k)_d|\geq \eta r_k$;
    \item[$(iii)$] $y_k=(\overline y_k,(y_k)_d)\in \partial \Omega$ such that $d_{\partial \Omega}(x_k)=|y_k-x_k|$; 
    \item[$(iv)$] $\left| \frac{x_k-y_k}{|x_k-y_k|}-e_d\right|\geq \eps$.
\end{itemize}
Since $0\in \partial \Omega$, then $|y_k-x_k|\leq |x_k|\leq r_k$. Moreover, $(y_k)_d=\psi ( \overline y_k)$, and from $\nabla \psi (0)=0$, $\psi (0)=0$ and $|\overline y_k|\leq |y_k|\leq |y_k-x_k|+|x_k|\leq 2r_k$, we have 
\begin{equation}
    \label{lim_1_is_0}
    \lim_{k\rightarrow \infty}\frac{(y_k)_d}{r_k}= \lim_{k \rightarrow \infty} \frac{\psi(\overline{y}_k) - \psi(0)}{r_k} = 0.
\end{equation}
Thus, by $(ii)$ we obtain
\begin{align} \label{lower_bound_x_k-y_k}
    |x_k-y_k|&\geq |(x_k)_d-(y_k)_d|\geq \eta r_k \left(1- \frac{|(y_k)_d|}{\eta r_k}  \right)\geq \frac{\eta}{2}r_k,
\end{align}
if $k$ is chosen large enough. Moreover, by $(ii)$ and \eqref{lim_1_is_0}, we also have
\begin{equation}\label{y over x vanish}
\left| \frac{(y_k)_d}{(x_k)_d}\right|\leq \frac{|(y_k)_d|}{\eta r_k}\rightarrow 0 \qquad \text{as } k\rightarrow \infty.
\end{equation}
In particular, \eqref{lim_1_is_0} and \eqref{lower_bound_x_k-y_k} imply that
$$
\lim_{k\rightarrow \infty}\frac{(y_k)_d}{|x_k-y_k|}=0.
$$
Then, together with \eqref{y over x vanish}, we infer the expansion
$$
\frac{x_k-y_k}{|x_k-y_k|}=\frac{(\overline x_k-\overline y_k,(x_k)_d)}{\sqrt{|\overline x_k -\overline y_k|^2+ |(x_k)_d|^2}} + o(1).
$$
Thus
\begin{align*}
    \left|\frac{x_k-y_k}{|x_k-y_k|} - e_d\right|&\leq \frac{|\overline x_k-\overline y_k|}{\sqrt{|\overline x_k-\overline y_k|^2+|(x_k)_d|^2}}+ \left|\frac{(x_k)_d}{\sqrt{|\overline x_k-\overline y_k|^2+|(x_k)_d|^2}} -1\right|+o(1)\\
    &\leq \frac{|\overline x_k-\overline y_k|}{|(x_k)_d|}\frac{1}{\sqrt{1+\frac{|\overline x_k-\overline y_k|^2}{|(x_k)_d|^2}}}+ \left|\frac{1}{\sqrt{1+\frac{|\overline x_k-\overline y_k|^2}{|(x_k)_d|^2}}}-1 \right| + o(1).
\end{align*}
Then $(iv)$ implies that necessarily 
\begin{equation}\label{almost final contrad}
\liminf_{k\rightarrow \infty} \frac{|\overline x_k- \overline y_k|}{ (x_k)_d}\geq \tilde \eps,
\end{equation}
for some $\tilde \eps =\tilde \eps (\eta,\eps)>0$. But \eqref{almost final contrad} contradicts the minimality of $y_k$. Indeed, letting $z_k:=(\overline x_k,\psi (\overline x_k))$, since $\nabla \psi (0)=0$ and $\psi (0)=0$, we have 
\begin{equation}
    \label{lim_equal_1}
     \lim_{k\rightarrow \infty} \frac{|x_k-z_k|}{(x_k)_d}=\lim_{k\rightarrow \infty} \frac{|(x_k)_d-\psi (\overline x_k)|}{(x_k)_d}=1.
\end{equation}
On the other hand, \eqref{almost final contrad} implies 
\begin{equation}
    \label{final contrad}
    \lim_{k\rightarrow \infty} \frac{|x_k-y_k|}{(x_k)_d}=\liminf_{k\rightarrow \infty} \frac{\sqrt{|\overline x_k-\overline y_k|^2 + |(x_k)_d|^2}}{(x_k)_d}\geq \sqrt{1+\tilde \eps ^2}>1,
\end{equation}
where in the first equality we have also used \eqref{y over x vanish}.
It is clear that \eqref{lim_equal_1}, together with \eqref{final contrad}, yields to a contradiction, since 
$$
|x_k-y_k|=d_{\partial \Omega}(x_k)\leq |x_k-z_k|.
$$
\end{proof}

\subsection{Minkowski content and rectifiable sets}
For all $m\geq 1$, we let $\omega_m:=\mathcal{H}^m (B_1)$ to be the $m$-dimensional volume of the unit ball $B_1\subset \R^m$. We also keep the notation $(A)_\eps$ introduced in \eqref{eps_tub_neig_def} for the $\eps$-tubular neighbourhood.
\begin{proposition}
    [Minkowski content of rectifiable sets]\label{P:federer minkowski rectifiable}
    Let $\Omega\subset \R^d$ be a bounded domain with Lipschitz boundary $\partial\Omega$. If $C\subset \partial \Omega$ is closed in the induced topology of $\partial \Omega$, then 
    \begin{equation}\label{minkow=hausd}
        \lim_{\eps\rightarrow 0^+} \frac{\mathcal H^d((C)_\eps)}{\omega_1 \eps}=\mathcal H^{d-1} (C).
    \end{equation}
    Moreover, there exists a dimensional constant $c>0$ such that 
    \begin{equation}
        \label{lower bound half density point}
        \liminf_{\eps\rightarrow 0^+}  \frac{\mathcal H^d((C)_\eps\cap \Omega)}{\eps}\geq c \mathcal H^{d-1} (C).
    \end{equation}
\end{proposition}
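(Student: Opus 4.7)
For \eqref{minkow=hausd}, my plan is to invoke Federer's classical theorem on the Minkowski content of closed countably rectifiable sets. Since $\Omega$ is a bounded Lipschitz domain, $\partial\Omega$ is closed, bounded, and a finite union of Lipschitz graphs, hence countably $(d-1)$-rectifiable with $\mathcal{H}^{d-1}(\partial\Omega)<\infty$. Any closed $C\subset\partial\Omega$ inherits these properties, so Federer's theorem applies and yields \eqref{minkow=hausd} directly, recalling that $\omega_1=2$.

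For \eqref{lower bound half density point}, which is a one-sided statement, the plan is to exploit the Lipschitz chart structure of $\partial\Omega$ and construct explicit one-sided vertical tubes on the interior side. By compactness, I cover $\partial\Omega$ by finitely many open sets $\{U_k\}_{k=1}^N$ in which, after a rigid motion of coordinates, $\partial\Omega\cap U_k$ is the graph of a Lipschitz function $\psi_k:V_k\to\R$ with constant at most $L$, and $\Omega\cap U_k=\{(\bar y, y_d):y_d>\psi_k(\bar y)\}$. I select a shrunken subcover $U_k'\subset\joinrel\subset U_k$ still covering $\partial\Omega$, set $\eps_0:=\min_k \dist(\overline{U_k'},\partial U_k)>0$, and fix a Borel partition $C=\bigsqcup_k E_k$ with $E_k\subset U_k'\cap C$. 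For $\eps<\eps_0$ and each $k$, I consider the one-sided vertical slab
\begin{equation*}
T_k(\eps):=\left\{(\bar y,y_d):\bar y\in \pi_k(E_k),\ \psi_k(\bar y)<y_d<\psi_k(\bar y)+\tfrac{\eps}{\sqrt{1+L^2}}\right\},
\end{equation*}
where $\pi_k$ is the projection onto the first $d-1$ coordinates in the chart frame. A short check based on $|\psi_k(\bar y)-\psi_k(\bar x)|\leq L|\bar y-\bar x|$ shows $T_k(\eps)\subset (E_k)_\eps\cap\Omega$; Fubini gives $\mathcal{H}^d(T_k(\eps))=\frac{\eps}{\sqrt{1+L^2}}\mathcal{H}^{d-1}(\pi_k(E_k))$, and since the graph map $\bar y\mapsto (\bar y,\psi_k(\bar y))$ is Lipschitz with constant $\sqrt{1+L^2}$, also $\mathcal{H}^{d-1}(E_k)\leq(1+L^2)^{(d-1)/2}\mathcal{H}^{d-1}(\pi_k(E_k))$.

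Summing these estimates and using that each point of $\R^d$ lies in at most $N$ of the $U_k$ (so the tubes $T_k(\eps)$ have uniformly bounded multiplicity in the ambient space), I obtain $\mathcal{H}^d((C)_\eps\cap\Omega)\geq N^{-1}\sum_k \mathcal{H}^d(T_k(\eps))\geq c\,\eps\,\mathcal{H}^{d-1}(C)$ for some positive $c=c(d,L,N)>0$, which yields \eqref{lower bound half density point}. The main obstacle I foresee is controlling the overlap of the $T_k(\eps)$'s across charts: although the $E_k$ are pairwise disjoint in $\partial\Omega$, their interior $\eps$-tubes may overlap in $\R^d$, but this is absorbed cleanly by the finite multiplicity of the cover. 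Obtaining a constant depending only on the ambient dimension $d$ would require a sharper argument, exploiting that at $\mathcal{H}^{d-1}$-a.e. boundary point the density of $\Omega$ equals $\tfrac{1}{2}$ (from Rademacher's theorem applied to each $\psi_k$) together with an Egorov-type uniformization and a Vitali or Besicovitch covering to extract a disjoint family of balls whose interior halves have measure comparable to $\eps^d$; for the inequality as stated, however, the chart-based construction above is sufficient.
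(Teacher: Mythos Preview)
Your argument for \eqref{minkow=hausd} via Federer's theorem is exactly what the paper does.

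For \eqref{lower bound half density point} your chart-and-tube construction is correct and more elementary than the paper's argument, but it does not prove the statement as written: the constant you obtain is $c=c(d,L,N)$, depending on the Lipschitz constant of $\partial\Omega$ and the number of charts, whereas the proposition asserts a \emph{dimensional} constant. You are clearly aware of this, and in fact the sharper argument you sketch in your last paragraph --- use that $\mathcal H^{d-1}$-a.e.\ boundary point has interior density $\tfrac12$, apply Egorov to uniformize, extract a disjoint Vitali family $\{B_\eps(x_i)\}_{i=1}^{N_\eps}$ from $A_\delta$, bound $N_\eps\eps^{d-1}$ from below via \eqref{minkow=hausd}, and then sum $\mathcal H^d(B_\eps(x_i)\cap\Omega)\geq \tfrac14\omega_d\eps^d$ --- is precisely the route the paper takes. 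That approach buys the purely dimensional constant; yours buys simplicity and avoids both Egorov and the covering lemma.

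For the applications later in the paper (Propositions on the normal Lebesgue trace), a domain-dependent constant would in fact suffice, so your weaker version is not useless. But if you want to match the statement, you should carry out the density/Egorov/Vitali argument you already outlined rather than the chart construction.
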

\begin{proof}
Since $\partial \Omega$ is $(d-1)$-rectifiable and $C$ is closed in $\R^d$, \eqref{minkow=hausd} follows by \cite{Fed}*{Theorem 3.2.39}. 

Let $\eps_k\rightarrow 0^+$ be any sequence. Since $\partial \Omega$ is Lipschitz, by \cite{AFP00}*{Theorem 3.61} there exists $C_{\sfrac{1}{2}}\subset C$, with $\mathcal H^{d-1} (C\setminus C_{\sfrac{1}{2}})=0$, such that 
$$
\lim_{k\rightarrow \infty}f_{\eps_k} (x):= \lim_{\eps\rightarrow 0^+}\frac{\mathcal H^d(B_{\eps_k}(x)\cap \Omega)}{\mathcal H^d(B_{\eps_k} (x))}=\frac{1}{2}\qquad \forall x\in C_{\sfrac{1}{2}}.
$$
Pick any $\delta>0$, which will be fixed at the end of the proof. By Egorov's theorem we can find $A_\delta\subset C_{\sfrac{1}{2}}$, closed in the induced topology of $\partial \Omega$, such that $\mathcal H^{d-1} ( C_{\sfrac{1}{2}}\setminus A_\delta)<\delta$ and 
\begin{equation}
   \label{unif_conv_dens_half}
   \lim_{k\rightarrow \infty} \sup_{x\in A_\delta} \left| f_{\eps_k} (x)-\frac{1}{2}\right|=0.
\end{equation}
Consider the covering 
$$
A_\delta\subset \bigcup_{x\in A_\delta} B_\eps (x)=(A_\delta)_\eps,
$$
and extract a disjoint Vitali subcovering $\left\{ B_\eps (x_i)\right\}_{i=1}^{N_\eps}$ such that 
$$
(A_\delta)_\eps\subset \bigcup_{i=1}^{N_\eps}B_{5\eps}(x_i).
$$
Thus, since $A_\delta\subset \R^d$ is closed and $(d-1)$-rectifiable, by \eqref{minkow=hausd} we obtain 
\begin{equation}\label{number convering lower bound}
\mathcal{H}^{d-1} (A_\delta)=\lim_{\eps\rightarrow 0^+} \frac{\mathcal H^d((A_\delta)_\eps)}{\omega_1 \eps}\leq \liminf_{\eps\rightarrow 0^+} \frac{\sum_{i=1}^{N_\eps}\mathcal H^d(B_{5\eps}(x_i))}{\omega_1 \eps}\leq c \liminf_{\eps\rightarrow 0^+} N_\eps \eps^{d-1},
\end{equation}
for some dimensional constant $c>0$. By \eqref{unif_conv_dens_half} and since the family $\left\{ B_\eps (x_i)\right\}_{i=1}^{N_\eps}$ is disjoint, we find $k_0\in \N$  sufficiently large (recall that $\eps_k\rightarrow0^+$) such that, for all $k\geq k_0$, we bound from below
\begin{align*}
    \frac{\mathcal H^d((C)_{\eps_k}\cap \Omega)}{\eps_k}&\geq  \frac{\mathcal H^d((A_\delta)_{\eps_k}\cap \Omega)}{\eps_k}\geq \frac{\mathcal H^d\left(\bigcup_{i=1}^{N_{\eps_k}} B_{\eps_k} (x_i)\cap \Omega\right)}{\eps_k}\\
    &=\sum_{i=1}^{N_{\eps_k}} \frac{\mathcal H^d (B_{\eps_k} (x_i)\cap \Omega)}{\eps_k}=\omega_d \eps_k^{d-1}\sum_{i=1}^{N_{\eps_k}} \frac{\mathcal H^d (B_{\eps_k} (x_i)\cap \Omega)}{\mathcal H^d (B_{\eps_k}(x_i))}\\
    &\geq \frac{\omega_d}{4} N_{\eps_k}\eps_k^{d-1}.
\end{align*}
In particular, by \eqref{number convering lower bound} we obtain 
$$
\liminf_{k\rightarrow \infty} \frac{\mathcal H^d((C)_{\eps_k}\cap \Omega)}{\eps_k}\geq c \mathcal H^{d-1} (A_\delta)\geq c \left( \mathcal H^{d-1} (C)-\delta\right).
$$
Hence, since the sequence $\eps_k\rightarrow 0^+$ was arbitrary, we obtain \eqref{lower bound half density point} by choosing $2\delta=\mathcal H^{d-1}(C)$.
\end{proof}

\subsection{A one-sided Egorov lemma}
We have the following “one-sided" Egorov-type convergence result.
\begin{lemma}[One-sided Egorov]\label{L:egorov_one_side}
    Let $(X,\mu)$ be a finite measure space, $L\in \R$ and $f_n$ a sequence of measurable functions such that $\liminf_{n\rightarrow\infty} f_n (x) \geq L$ for $\mu$-a.e. $x\in X$. For every $\delta>0$ there exists $A_\delta\subset X$ such that $\mu (A_\delta^c)<\delta$ and 
    $$
    \liminf_{n\rightarrow \infty} \inf_{x\in A_\delta} f_n(x)\geq L.
    $$
\end{lemma}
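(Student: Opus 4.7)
The plan is to reduce the one-sided assumption to a genuine pointwise convergence statement, so that classical Egorov's theorem becomes applicable. Concretely, I would truncate from above and consider
\[
g_n(x) := \min(f_n(x), L).
\]
Each $g_n$ is measurable, and $g_n \leq L$ everywhere, so $\limsup_n g_n(x) \leq L$ pointwise. On the other hand, on the full-measure set $E \subset X$ where $\liminf_n f_n \geq L$, one has
\[
\liminf_n g_n(x) \geq \min\bigl(\liminf_n f_n(x),\, L\bigr) \geq L.
\]
Thus $g_n \to L$ pointwise $\mu$-a.e. on $X$.

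Since $(X,\mu)$ is a finite measure space and $L$ is a finite constant, I would now invoke the classical Egorov theorem applied to the sequence $g_n$ converging a.e. to the constant $L$: for every $\delta > 0$ there exists a measurable $A_\delta \subset X$ with $\mu(A_\delta^c) < \delta$ such that $g_n \to L$ uniformly on $A_\delta$. In particular, for every $\eta > 0$ there exists $n_0 = n_0(\eta)$ with
\[
g_n(x) > L - \eta \qquad \forall\, n \geq n_0,\ \forall\, x \in A_\delta.
\]

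Finally, I would remove the truncation using $f_n \geq g_n$: since $g_n(x) = \min(f_n(x),L)$ is always $\leq f_n(x)$, the previous inequality upgrades to $f_n(x) > L - \eta$ for all $n \geq n_0$ and $x \in A_\delta$. Taking the infimum over $x \in A_\delta$ and then the liminf in $n$,
\[
\liminf_{n\to\infty} \inf_{x \in A_\delta} f_n(x) \geq L - \eta,
\]
and letting $\eta \to 0^+$ yields the desired conclusion. There is no real obstacle here: the only subtlety worth flagging is verifying that the truncated sequence $g_n$ converges to the constant $L$ on a set of full measure (which is immediate from squeezing $\limsup g_n \leq L \leq \liminf g_n$); once that is observed, the statement is a direct corollary of the standard Egorov theorem.
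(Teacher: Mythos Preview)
Your proof is correct and takes a genuinely different route from the paper. The paper proceeds by a direct construction: for each $k\in\N$ it sets $A_{N,k}:=\bigcap_{n\geq N}\{x:f_n(x)\geq L-\tfrac{1}{k}\}$, observes that $A_{N,k}\nearrow X$ (up to a null set) as $N\to\infty$, picks $N_k$ with $\mu(A_{N_k,k}^c)<\delta 2^{-k}$, and then sets $A_\delta:=\bigcap_k A_{N_k,k}$. In other words, the paper essentially reproves Egorov's theorem from scratch, tailored to the one-sided situation. Your argument instead reduces to the classical statement via the truncation $g_n=\min(f_n,L)$, which forces genuine pointwise convergence $g_n\to L$ a.e.\ and lets you invoke Egorov as a black box; the inequality $f_n\geq g_n$ then transfers the uniform lower bound back to $f_n$. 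Your approach is shorter and more conceptual; the paper's approach has the minor advantage of being self-contained and of exhibiting the set $A_\delta$ explicitly. Both arguments use the finiteness of $\mu$ in the same essential way.
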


\begin{proof}
Since $\liminf_{n\rightarrow\infty} f_n \geq L$ $\mu$-a.e. in $X$, for every $k\in \N$, the set 
$$
\bigcup_{N=1}^\infty \bigcap_{n\geq N} \left\{x\,:\, f_n(x)\geq L-\frac{1}{k}\right\}
$$
has full measure in $X$. For every  $k$ fixed, the sets
$$
A_{N,k}:=\bigcap_{n\geq N} \left\{x\,:\, f_n(x)\geq L-\frac{1}{k}\right\}
$$
form an increasing family with respect to $N\geq 1$, and since $\mu(X)<\infty$, we deduce that
$$
0 = \mu \left( \bigcap_{N = 1}^{\infty} A_{N,k}^c\right) = \lim_{N\rightarrow \infty} \mu(A_{N,k}^c).
$$
Fix $\delta>0$. For every $k\in \N$ we can find $N_k=N_k(\delta)\in \N$ such that $\mu(A_{N_k,k}^c)<\delta 2^{-k}$,
from which
$$
\mu \left( \bigcup_{k=1}^\infty A_{N_k,k}^c\right)<\delta.
$$
Thus define $A_\delta:= \bigcap_{k=1}^\infty A_{N_k,k}$. Clearly $A_\delta \subset A_{N_m,m}$ for every $m\in \N$. Thus let $x\in A_\delta$ and $m\in \N$ be arbitrary. In particular $x\in A_{N_m,m}$, that is 
$$
f_n(x)\geq L-\frac{1}{m}\qquad \forall n\geq N_m.
$$
Since $N_m$ does not depend on $x$, we deduce
$$
\inf_{n\geq N_m}\inf_{x\in A_\delta}f_n(x)\geq L-\frac{1}{m},
$$
which implies
$$
\liminf_{n\rightarrow \infty}\inf_{x\in A_\delta}f_n(x)\geq L-\frac{1}{m}.
$$
The proof is concluded since $m\in \N$ was arbitrary.
\end{proof}

\subsection{Alberti--Ambrosio anisotropic kernel optimization}
Here we prove a variation of the so called Alberti's lemma, which was introduced to prove the renormalization property of weak solutions to the transport equations with $BV$ vector fields, first established by Ambrosio in \cite{Ambr04}. Actually, in the original proof, Ambrosio performed the optimization of the kernel by exploiting the rank-one structure of the singular part of the gradient of $BV$ function, a deep result in Geometric Measure Theory also due to Alberti \cite{Al93}. Alberti's lemma provides a shortcut to optimize the choice of the kernel without relying on too deep theories. 

Note that the kernels used in the next lemma are compactly supported, but not necessarily in $B_1$, and not smooth.
\begin{lemma} [Alberti] \label{L:alberti}
Let $\eta: \R^d \to \R^d$ be a Lipschitz  vector field. Assume that $\eta$ is odd and $\div \eta$ is a constant. Let 
$$\mathcal{K}_c = \left\{ \rho \in W^{1,\infty}_c(\R^d), \,\rho \geq 0, \,\rho \text{ even}, \int_{\R^d} \rho(z) \, dz =1  \right\}. $$
Then, we have that 
\begin{equation}
\inf_{\rho \in \mathcal{K}_c} \int_{\R^d} \abs{\nabla \rho(z) \cdot \eta (z)} \, dz = |\div \eta|.  
\end{equation}  
\end{lemma}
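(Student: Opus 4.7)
I would prove the two inequalities separately. The lower bound is immediate from integration by parts; the upper bound is the substantial part and would be obtained from a flow-based construction of an explicit approximating sequence of admissible kernels.

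\emph{Lower bound.} For every $\rho\in\mathcal K_c$ compactly supported, integration by parts together with the constancy of $\div\eta$ gives
$$
\int_{\R^d}\nabla\rho(z)\cdot\eta(z)\,dz=-\int_{\R^d}\rho(z)\,\div\eta\,dz=-\div\eta,
$$
since $\int\rho=1$. Hence $\int|\nabla\rho\cdot\eta|\,dz\geq|\div\eta|$, and taking the infimum over $\rho$ yields the $\geq$ direction.

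\emph{Upper bound via the flow of $\eta$.} Write $c:=\div\eta$. The case $\eta\equiv 0$ is trivial, so I may assume there exists $z_0\ne 0$ with $\eta(z_0)\ne 0$. Oddness forces $\eta(0)=0$, so by Gronwall's inequality the flow $\Phi_t$ of the Lipschitz vector field $\eta$ is globally defined and bi-Lipschitz for every $t$. Fix a non-negative $\phi\in C^\infty_c(\R^d)$ with $\int\phi=1$ and support in a small ball around $z_0$, and for $T>0$ set
$$
\rho_T(z):=\frac{1}{T}\int_0^T\phi(\Phi_{-t}(z))\,dt,
$$
which is compactly supported and Lipschitz. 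Since $\div\eta\equiv c$, Liouville's identity yields $|\det D\Phi_t|\equiv e^{ct}$, and a change of variables then gives $\int\rho_T\,dz=(e^{cT}-1)/(cT)$ if $c\ne 0$, and $=1$ if $c=0$. Differentiating $\rho_T(\Phi_s(z))$ in $s$ at $s=0$ (i.e.\ using the Lie derivative identity $\nabla\rho_T\cdot\eta=\tfrac{d}{ds}|_{s=0}\rho_T\circ\Phi_s$) produces the telescoping identity
$$
\nabla\rho_T(z)\cdot\eta(z)=\frac{1}{T}\bigl[\phi(z)-\phi(\Phi_{-T}(z))\bigr],
$$
so that $\int|\nabla\rho_T\cdot\eta|\,dz\leq (1+e^{cT})/T$. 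Normalising $\rho_T$ to unit mass, the resulting ratio equals $c(1+e^{cT})/(e^{cT}-1)$ and tends to $|c|$ as $T\to+\infty$ for either sign of $c$; the degenerate case $c=0$ is handled directly, as $2/T\to 0$.

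\emph{Restoring evenness and main obstacle.} The kernel $\rho_T$ produced above is not even; I would therefore symmetrise by setting $\tilde\rho_T(z):=\tfrac12(\rho_T(z)+\rho_T(-z))$. Using $\eta(-z)=-\eta(z)$, a short computation yields
$$
\nabla\tilde\rho_T(z)\cdot\eta(z)=\tfrac12\nabla\rho_T(z)\cdot\eta(z)+\tfrac12\nabla\rho_T(-z)\cdot\eta(-z),
$$
and the triangle inequality combined with the change of variable $z\mapsto -z$ gives $\int|\nabla\tilde\rho_T\cdot\eta|\,dz\leq\int|\nabla\rho_T\cdot\eta|\,dz$, while $\int\tilde\rho_T\,dz=\int\rho_T\,dz$. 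Normalising $\tilde\rho_T$ to unit mass thus provides a sequence in $\mathcal K_c$ along which the cost tends to $|c|$. The main conceptual obstacle I expect is securing the \emph{sharp} constant in the upper bound: the exponential Jacobian $e^{cT}$ from Liouville's formula and the telescoping boundary term $\phi-\phi\circ\Phi_{-T}$ must combine \emph{exactly} so that the renormalised ratio produces $|c|$ rather than some multiple of it; the oddness hypothesis plays only the secondary role of making the symmetrisation compatible with the cost functional.
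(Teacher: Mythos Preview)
Your proof is correct and follows essentially the same flow-based strategy as the paper: both obtain the telescoping identity $\nabla\rho_T\cdot\eta=\tfrac{1}{T}[\phi-\phi\circ\Phi_{-T}]$ and use Liouville's Jacobian formula $|\det D\Phi_t|=e^{ct}$ to control the renormalised cost. The only cosmetic difference is that the paper starts from an \emph{even} seed kernel $\theta$ and observes that oddness of $\eta$ forces the flow $\Phi_t$ to be odd, so $\rho_T$ is automatically even; this bypasses your symmetrisation step, but your symmetrisation argument is equally valid (and your choice of $z_0$ with $\eta(z_0)\neq 0$ is in fact unnecessary).
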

Usually Alberti's lemma (see for instance \cite{Cr09}*{Lemma 2.13}) assumes that $\eta(z)=Mz$, for a given matrix $M$, and proves that the infimum equals $|\tr M|$. Here we preferred to state the version with a general odd Lipschitz vector field $\eta$ since it might be of better use in contexts in which the linearity in $z$ is missing. For instance, this might be the case in proving energy conservation for incompressible Euler in an Onsager's critical class different from $BV\cap L^\infty$.
\begin{proof} 
Letting $X : [0,+\infty) \times \R^d \to \R^d$ be the flow generated by the vector field $\eta$, by the classical Cauchy--Lipschitz theory we have that $X_t$ is a bi-Lipschitz transformation with Lipschitz constants proportional to $e^{Lt}$, where $L$ is the Lipschitz constant of $\eta$. Since $\eta$ is odd we infer that $X_t, X_t^{-1}$ are odd for any time slice $t \in [0, +\infty)$. We split the proof in the two cases $\div \eta=0$ and $\div \eta\neq 0$.

\emph{$1.$ Case $\div \eta=0$}. Since $\eta$ is incompressible, then $X_t, X_t^{-1}$ are measure preserving. Fix a convolution kernel $\theta \in \mathcal{K}$ (see \eqref{kernel_in_B1}) and, for any $T>0$, define 
\begin{equation}
\rho_T(z) := \frac{1}{T} \int_0^T \theta(X_t^{-1}(z))\, dt. 
\end{equation}
It is immediate to see that $\rho_T$ is even, non-negative and Lipschitz, with Lipschitz constant proportional to $e^{LT}$ and $\rho_T$ has compact support. Moreover, we have that 
$$\int_{\R^d} \rho_T(z) \, dz = \frac{1}{T} \int_0^T \int_{\R^d} \theta(X_t^{-1}(z))\, dz \, dt = \frac{1}{T} \int_0^T \int_{\R^d} \theta(z)\, dz \, dt = 1. $$
Therefore, we have that $\rho_T \in \mathcal{K}_c$. Since $X$ is the flow generated by $\eta$, we have that 
\begin{align*}
\nabla \rho_T(z) \cdot\eta(z) & = \frac{1}{T} \int_0^T \nabla (\theta \circ X_t^{-1}) (z) \cdot \eta(z) \, dt
\\ & = \frac{1}{T} \int_0^T \nabla \theta(X_t^{-1}(z)) \nabla X_t^{-1}(z) \cdot \eta(z) \, dt 
\\ & = \frac{1}{T} \int_0^T \nabla \theta(X_t^{-1}(z)) \nabla X_t^{-1}(z) \cdot\partial_t X_t(X_t^{-1}(z)) \, dt.
\end{align*}
Moreover
$$0= \frac{d}{dt} (X_t \circ X_t^{-1})(z) = \partial_t X_t(X_t^{-1}(z)) + \nabla X_t(X_t^{-1}(z)) \partial_t X_t^{-1}(z).  $$
Hence, we infer that 
\begin{align}
\nabla \rho_T(z)\cdot  \eta(z) & = - \frac{1}{T} \int_0^T \nabla \theta(X_t^{-1}(z)) \nabla X_t^{-1}(z) \cdot \nabla X_t (X_t^{-1}(z)) \partial_t X_t^{-1}(z) \, dt\nonumber 
\\ & = - \frac{1}{T} \int_0^T \nabla \theta (X_t^{-1}(z))\cdot \partial_t X_t^{-1}(z) \, dt\nonumber 
\\ & = -\frac{1}{T} \int_0^T \frac{d}{dt} (\theta \circ X_t^{-1}(z)) \, dt \nonumber 
\\ & = \frac{1}{T} \left[ \theta(z) -\theta(X_T^{-1}(z)) \right]. \label{comput_alberti}
\end{align}
Therefore, we conclude that 
\begin{equation}
\int_{\R^d} \abs{\nabla \rho_T(z) \cdot\eta(z) }\, dz \leq \frac{1}{T} \int_{\R^d} \left[ \theta(z) + \theta(X_T^{-1}(z)) \right] \,dz = \frac{2}{T}.  
\end{equation}
Letting $T \to +\infty$, we conclude the proof.

\emph{$2.$ Case $\div \eta\neq 0$}. Denote $\div \eta=a\in \R$. The Jacobian $J_t(z):=\det (\nabla X_t (z))$ solves
$$
\left\{\begin{array}{l}
\frac{d}{dt} J_t(z) = a J_t (z)\\ 
J_0(z)= 1,
\end{array}\right.
$$
which gives $J_t(z) = e^{at}$ for any $z$. Thus, for a fixed convolution kernel $\theta\in \mathcal K$, for any $T>0$, we define 
$$
\rho_T (z):= \frac{a}{e^{aT}-1} \int_0^T \theta(X_t^{-1}(z))\,dt.
$$
We have $\rho_T\geq 0$ and 
$$
\int_{\R^d} \rho_T(z)\,dz=\frac{a}{e^{aT}-1} \int_0^T J_t\,dt \int_{\R^d}\theta(z)\,dz=1.
$$
Therefore $\rho_T\in \mathcal K_c$. The same computations \eqref{comput_alberti} lead to
$$
\int_{\R^d} \abs{\nabla \rho_T(z) \cdot\eta(z) }\, dz \leq\frac{|a|}{\left| e^{aT}-1\right|}\left( 1+ e^{aT}\right).
$$
In both cases $a>0$ and $a<0$ the right hand side in the above estimate converges to $|a|$ as $T\rightarrow +\infty$, which together with the trivial lower bound
$$
\int_{\R^d} \abs{\nabla \rho_T(z) \cdot\eta(z) }\, dz \geq \left| \int_{\R^d} \nabla \rho_T(z) \cdot\eta(z) \,dz\right|=|a|,
$$
concludes the proof.
\end{proof}

Following the idea used in \cite{Ambr04}, a consequence of \cref{L:alberti} is the following result, which will be used  in \cref{S:BV_cons} to prove the energy conservation in $L^1_tBV_x\cap L^\infty_{x,t}$ of \cref{T:cons_local_BV}. The set $\mathcal{K}$ denotes kernels supported in the unit ball as introduced in \eqref{kernel_in_B1}.

\begin{proposition}\label{P:Local energy conservation}
Let $U\subset \R^m$ be an open set. Take $\lambda \in  \mathcal{M}_{\rm loc}(U)$ a signed measure, $\nu\in \mathcal{M}_{\rm loc}(U)$ a positive measure and $\eta:U\times B_1\rightarrow \R^d$ a vector field with the following properties.
\begin{itemize}
    \item[(i)] Lipschitz continuity: $\eta_{x}\in \Lip(B_1)$ for $\nu$-a.e. $x\in U$;
    \item[(ii)] Odd and $1$-homogeneous: $\eta_{x} (a z)=a\eta_{x}( z)$ for all $a\in (-|z|^{-1},|z|^{-1})$ and all $z\in \overline B_1$, for $\nu$-a.e. $x\in U$;
\item[(iii)] Divergence-free: $\div_z \eta_{x}=0$ for $\nu$-a.e. $x\in U$;
    \item[(iv)] Uniformly $\nu$-integrable: $\| \eta_{x}(\cdot)\|_{L^\infty(B_1)}\in L^1_{\rm loc} (U;\nu)$.
\end{itemize}
If for any $O\subset\joinrel\subset K\subset\joinrel\subset  U$, $O$ open and $K$ compact, there exists a constant $C=C(K)>0$ such that
\begin{equation}\label{cons_cond_general}
 \abs{ \langle\lambda,\varphi \rangle  }  \leq C \|\varphi\|_{L^\infty(O)}  \int_O \left(  \int_{B_1} \abs{\nabla \rho(z) \cdot\eta_{x}(z)} \, dz  \right) d \nu \qquad \forall (\varphi,\rho) \in C^0_c(O)\times \mathcal{K},
\end{equation}
then 
\begin{equation}\label{inf=0}
|\lambda| (K)=0 \qquad \forall K\subset\joinrel\subset U\text{ compact}.
\end{equation}
\end{proposition}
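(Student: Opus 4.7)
The plan is to upgrade the duality estimate \eqref{cons_cond_general} to a pointwise density bound for $|\lambda|$ with respect to $\nu$ and then kill that density fiberwise using Alberti's lemma. Setting $f_\rho(x):=\int_{B_1}|\nabla \rho(z)\cdot\eta_{x}(z)|\,dz$ and taking the supremum over $\varphi\in C^0_c(O)$ with $\|\varphi\|_{L^\infty(O)}\le 1$ in \eqref{cons_cond_general} yields $|\lambda|(O)\le C(K)\int_O f_\rho\,d\nu$ for every open $O\subset\joinrel\subset K$ and every $\rho\in \mathcal K$. By outer regularity of Radon measures this promotes to the measure inequality $|\lambda|\llcorner K\le C f_\rho\,\nu\llcorner K$, which in particular forces $|\lambda|\llcorner K\ll \nu$. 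Radon--Nikodym then provides $|\lambda|\llcorner K=h\,\nu\llcorner K$ with $0\le h(x)\le C f_\rho(x)$ for $\nu$-a.e.\ $x$, for each fixed $\rho\in \mathcal K$.

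To move the infimum inside the essential supremum over $x$, I would fix a countable family $\{\rho_n\}\subset \mathcal K$ that is dense in $W^{1,1}(B_1)$. The elementary bound
\[
|f_\rho(x)-f_{\rho'}(x)|\le \|\nabla\rho-\nabla\rho'\|_{L^1(B_1)}\|\eta_{x}\|_{L^\infty(B_1)},
\]
combined with hypothesis (iv), shows $\rho\mapsto f_\rho(x)$ is $W^{1,1}$-continuous for $\nu$-a.e.\ $x$. Intersecting the countably many $\nu$-full measure sets on which $h\le C f_{\rho_n}$ and invoking density then yields $h(x)\le C\inf_{\rho\in \mathcal K}f_\rho(x)$ for $\nu$-a.e.\ $x$.

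The core of the proof is then to show $\inf_{\rho\in \mathcal K}f_\rho(x)=0$ for $\nu$-a.e.\ $x$. Fix such an $x$ and extend $\eta_x$ from $B_1$ to all of $\R^d$ by $1$-homogeneity via $\tilde\eta_x(w):=|w|\,\eta_{x}(w/|w|)$. A direct computation using (i)--(iii) verifies that $\tilde\eta_x$ is globally Lipschitz, odd and divergence-free on $\R^d$, so \cref{L:alberti} produces $\rho_T^x\in \mathcal K_c$, supported in some ball $B_{R_T^x}$, with $\int_{\R^d}|\nabla \rho_T^x\cdot \tilde\eta_x|\,dz\le 2/T$. Rescaling $\tilde\rho_T^x(z):=R^d\rho_T^x(Rz)$ for any $R>R_T^x$ places the support in $B_1$, and the $1$-homogeneity identity $\tilde\eta_x(w/R)=\tilde\eta_x(w)/R$ makes the change of variables $w=Rz$ an isometry of the target integral:
\[
\int_{B_1}|\nabla\tilde\rho_T^x(z)\cdot\eta_{x}(z)|\,dz=\int_{\R^d}|\nabla\rho_T^x(w)\cdot\tilde\eta_x(w)|\,dw\le \frac{2}{T}.
\]
A final convolution with a symmetric smooth mollifier of scale $\delta=\delta(x,T)$, after slightly shrinking the support to leave room, smooths $\tilde\rho_T^x$ into a genuine element of $\mathcal K$ at the cost of an $L^1$ error bounded by $\delta\,\mathrm{Lip}(\tilde\eta_x)\|\nabla\tilde\rho_T^x\|_{L^1}$, which is absorbed by choosing $\delta$ small. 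Letting $T\to\infty$ gives the pointwise claim, and combined with the previous paragraph $h\equiv 0$ $\nu$-a.e., so $|\lambda|(K)=0$. The main difficulty is precisely this last step: Alberti's flow-based kernels only lie in $W^{1,\infty}_c(\R^d)$ and have support radius growing like $e^{\mathrm{Lip}(\tilde\eta_x) T}$, so they must be rescaled back into $B_1$ and mollified into $C_c^\infty(B_1)$ without spoiling the estimate; the $1$-homogeneity (ii) is essential, as it is what makes the required rescaling exactly preserve the integral, and a single universal $\rho$ is only manufactured a posteriori through the countable-density trick of the second paragraph.
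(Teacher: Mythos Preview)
Your proof is correct and follows essentially the same route as the paper: upgrade \eqref{cons_cond_general} to a $\nu$-a.e.\ pointwise bound on the Radon--Nikodym density via a countable $W^{1,1}$-dense family of kernels, then kill the infimum fiberwise using Alberti's lemma together with the $1$-homogeneous rescaling. The only cosmetic difference is that the paper first enlarges the admissible kernel class to $\mathcal K_W:=\{\rho\in W^{1,1}_0(B_1):\rho\ge 0,\ \rho\text{ even},\ \int\rho=1\}$ before taking the infimum, so that the rescaled Alberti kernel lands directly in the optimizing class and your final mollification step becomes unnecessary.
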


\begin{proof}

The proof is an application of \cref{L:alberti}. Fix any $K\subset \joinrel \subset U$ and $\rho\in \mathcal K$. First, having \eqref{cons_cond_general} for all $\varphi \in  C^0_c(O) $, for any open set $O\subset\joinrel\subset K$, it implies the inequality (in the compact set $K$, which fixes the constant $C$) in the sense of measures
\begin{equation*}
|\lambda |\leq C\left( \int_{B_1} \abs{\nabla \rho(z) \cdot\eta_{x}(z)} \, dz\right) \nu.
\end{equation*}
In particular $|\lambda|\ll \nu$ and, denoting by $D_\nu |\lambda|$ the Radon--Nikodym derivative of $|\lambda|$ with respect to $\nu$, we have 
\begin{equation}\label{rad_nik_der}
(D_\nu |\lambda|)(x)\leq C\int_{B_1} \abs{\nabla \rho(z) \cdot\eta_{x}(z)} \, dz\qquad \text{for } \nu\text{-a.e. } x\in K.
\end{equation}
Set 
$$
\mathcal{K}_{W} := \left\{ \rho \in W^{1,1}_0(B_1), \,\rho \geq 0, \,\rho \text{ even }, \int_{\R^d} \rho(z) \, dz =1  \right\}.
$$
We claim that 
\begin{equation}\label{variation inequal 2}
(D_\nu |\lambda |)(x)\leq C \inf_{\rho\in \mathcal K_W} \int_{B_1} \abs{\nabla \rho(z) \cdot\eta_{x}(z)} \, dz\qquad \text{for } \nu\text{-a.e. } x\in K.
\end{equation}
Indeed, for any $\rho \in \mathcal K_W$ there exists a sequence $\{\rho_n\}_n \subset \mathcal K$ such that $\rho_n\rightarrow \rho$ in $W^{1,1}_0(B_1)$. Thus, \eqref{rad_nik_der} remains valid for every $\rho \in \mathcal K_W$. Then, pick $\tilde{\mathcal K}_W\subset \mathcal K_W$ a countable dense subset (with respect to the $W^{1,1}$ topology). For any $\rho \in \Tilde{\mathcal{K}}_W$, we find a $\nu$-negligible set $A_\rho \subset K$ such that \eqref{rad_nik_der} is satisfied for any $x \in A_\rho^c$. Since $\Tilde{\mathcal K}_W$ is countable, we have that $A = \bigcup_{\rho \in \Tilde{\mathcal K}_W} A_\rho$ is $\nu$-negligible and for any $x \in A^c$ we have that 
$$(D_\nu |\lambda |)(x)\leq C \inf_{\rho\in \Tilde{ \mathcal K}_W} \int_{B_1} \abs{\nabla \rho(z) \cdot\eta_{x}(z)} \, dz. $$
Moreover, since the functional $\rho \mapsto \int_{B_1} \abs{\nabla \rho(z)\cdot\eta_x(z)}\, dz$ is continuous with respect to the $W^{1,1}$ topology, by density we have that 
$$
 \inf_{\rho\in \tilde{\mathcal K}_W} \int_{B_1} \abs{\nabla \rho(z) \cdot\eta_{x}(z)}\,dz = \inf_{\rho\in \mathcal K_W} \int_{B_1} \abs{\nabla \rho(z) \cdot\eta_{x}(z)}\,dz \qquad \forall x \in K. 
$$
Thus, \eqref{variation inequal 2} is proved. 

From now on, fix $x \in K$ such that \eqref{variation inequal 2} is satisfied. For any $\eps>0$, \cref{L:alberti} provides a $\rho_\eps \in \mathcal K_c$ such that 
$$
\int_{\R^d} |\nabla \rho_\eps (z) \cdot \eta_{x} (z)|\,dz<\eps.
$$
Since $\spt \rho_\eps$ is not necessarily contained in $B_1$, that is $\rho\not \in \mathcal K_W$, we have to consider a rescaling, which, together with the $1$-homogeneity of $\eta_{x}$, will conclude the proof. Indeed, if $B_{R_\eps}$ is a ball containing the support of $\rho_\eps$, we define
$$
\tilde \rho_\eps (z):=R^d_\eps \rho_\eps(R_\eps z).
$$
Clearly $ \tilde \rho_\eps \in \mathcal K_W$, and moreover by $(ii)$
$$
\int_{B_1} |\nabla \tilde\rho_\eps (z) \cdot \eta_{x} (z)|\,dz=R_\eps\int_{\R^d} \left|\nabla \rho_\eps (w) \cdot \eta_{x} (wR_\eps^{-1})\right|\,dw=\int_{\R^d} |\nabla \rho_\eps (w) \cdot \eta_{x} (w)|\,dw<\eps.
$$
Together with \eqref{variation inequal 2}, and being $\eps>0$ arbitrary, this proves that 
$$
(D_\nu |\lambda|) (x)=0\qquad \qquad \text{for } \nu\text{-a.e. } x\in K,
$$
and the arbitrariness of $K\subset \joinrel\subset U$ concludes the proof.
\end{proof}

\section{Duchon--Robert measure in critical Onsager's classes}\label{S:DR_formula}
This section is devoted to the proof of \cref{T:ons_crit} and discuss some of its consequences.
\begin{proof}[Proof of \cref{T:ons_crit}]

By looking at the expression of $D_\eps[u]$ in \eqref{DR_eps_formula}, it is natural to define the trilinear operator $T(u,v,w):= u (v\cdot w)$. Then, we define
$$
T_{\eps,z} (x,t):=\frac{1}{\eps} T\left( \delta_{\eps z} u , \delta_{\eps z} u , \delta_{\eps z} u\right).
$$
Pick any $K:=A\times I\subset \joinrel\subset \Omega\times (0,T)$. By H\"older inequality it is clear that under any of the assumptions \eqref{Besov_assumpt}-\eqref{BV_assumpt} we have
\begin{equation}
    \label{bounded L1 loc}
    \sup_{\substack{  |z|\leq 1 \\ \eps>0}} 
    \|T_{\eps,z}\|_{L^1(K)}\lesssim 1.
\end{equation}
Moreover, by the trilinear structure of $T_{\eps,z}$ we can deduce equicontinuity of the sequence with respect to $z$. Indeed we have
\begin{align}
    \eps\left( T_{\eps,z_1}-T_{\eps,z_2}\right)&= T\left( \delta_{\eps z_1} u-\delta_{\eps z_2} u, \delta_{\eps z_1} u,\delta_{\eps z_1} u\right)+ T\left( \delta_{\eps z_2} u, \delta_{\eps z_1} u-\delta_{\eps z_2} u, \delta_{\eps z_1} u\right)\nonumber\\
    &+ T\left( \delta_{\eps z_2} u, \delta_{\eps z_2} u, \delta_{\eps z_1} u -\delta_{\eps z_2} u\right).\label{trilin expansion}
\end{align}
The three terms in the above formula have to be estimated in a different, although quite similar, way depending on which is the assumption on $u$ among \eqref{Besov_assumpt}-\eqref{BV_assumpt}. Thus let us divide the cases.

\emph{Assuming \eqref{Besov_assumpt} for $p\leq 3$}:
Note that $\delta_{\eps z_1} u - \delta_{\eps z_2} u= u(x+\eps z_1,t)-u(x+\eps z_2,t)$. By the definition of Besov seminorm \eqref{besov_norm}, we estimate each term in the right hand side of \eqref{trilin expansion} by H\"older inequality, always taking $\delta_{\eps z_1} u - \delta_{\eps z_2} u$ in $L^p$ and each $\delta_{\eps z_i} u$ in $L^{2p'}$, getting
\begin{align}
\left\| T_{\eps,z_1}-T_{\eps,z_2}\right\|_{L^1(K)} &\lesssim |z_1 - z_2|^{\frac{1}{p}}\left(|z_1|^{\frac{1}{p'}}+ |z_1|^\frac{1}{2p'}|z_2|^\frac{1}{2p'}+ |z_2|^{\frac{1}{p'}}\right)\nonumber\\
&\lesssim |z_1 - z_2|^{\frac{1}{p}}\left(|z_1|^{\frac{1}{p'}}+ |z_2|^{\frac{1}{p'}}\right),\label{est hp Besov p}
\end{align}
where in the last line we used Young's inequality $2ab\leq a^2+b^2$.

\emph{Assuming \eqref{Besov_assumpt} for $p> 3$}:
In this case we always take $\delta_{\eps z_1} u - \delta_{\eps z_2} u$ in $L^{2p'}$, one of the $\delta_{\eps z_i} u$ in $L^{2p'}$ and the other in $L^p$. Thus, by the definition of Besov seminorm \eqref{besov_norm}, we have
\begin{align}
\left\| T_{\eps,z_1}-T_{\eps,z_2}\right\|_{L^1(K)} &\lesssim |z_1 - z_2|^{\frac{1}{2p'}}\left(|z_1|^{\frac{p+1}{2p}}+ |z_1|^\frac{1}{p}|z_2|^\frac{1}{2p'}+ |z_2|^{\frac{p+1}{2p}}\right)\nonumber\\
&\lesssim |z_1 - z_2|^{\frac{1}{2p'}}\left(|z_1|^{\frac{p+1}{2p}}+ |z_2|^{\frac{p+1}{2p}}\right).\label{est hp Besov p bigger 3}
\end{align}
The last inequality follows by Young's inequality $ab\leq \frac{a^r}{r}+\frac{b^s}{s}$, with $r=\frac{p+1}{2}$ and $s=\frac{p+1}{p-1}$.

\emph{Assuming \eqref{1_2_Assumpt}}:
In this case we always put $\delta_{\eps z_1} u - \delta_{\eps z_2} u$ in $L^{2}$, one of the $\delta_{\eps z_i} u$ in $L^{2}$ and the other in $L^\infty$. Hence, using again the definition of Besov seminorm \eqref{besov_norm}, we have 
\begin{align}
\left\| T_{\eps,z_1}-T_{\eps,z_2}\right\|_{L^1(K)} \lesssim |z_1 - z_2|^{\frac{1}{2}}\left(|z_1|^{\frac{1}{2}}+ |z_2|^{\frac{1}{2}}\right).\label{est hp Besov 2}
\end{align}

\emph{Assuming \eqref{BV_assumpt}}:
In this case we estimate $\delta_{\eps z_1} u - \delta_{\eps z_2} u$ in $L^{1}$ by \cref{L:BD_gradient}, and both of the $\delta_{\eps z_i} u$ in $L^{\infty}$, getting
\begin{align}
\left\| T_{\eps,z_1}-T_{\eps,z_2}\right\|_{L^1(K)} \lesssim |z_1 - z_2|.\label{est hp BV}
\end{align}

To summarize, under any of the assumption of the theorem, we have that  
\begin{equation}
    \label{est equicontinuity}
    \left\| T_{\eps,z_1}-T_{\eps,z_2}\right\|_{L^1(K)} \lesssim |z_1 - z_2|^\gamma,
\end{equation}
for some $\gamma \in (0,1]$ depending on the regularity of $u$, where the implicit constant depends on $K, u$ and it is uniform with respect to $\eps$.

We are ready to apply an Ascoli--Arzelà argument. Pick $\{z_k\}_k\subset B_1$ dense. By \eqref{bounded L1 loc} we have $T_{\eps,z_1}$ bounded in $L^1_{\rm loc}(\Omega\times (0,T);\R^d)$, thus we can find a subsequence $T_{j,z_1}\rightharpoonup \mu_{z_1}$ for some $\mu_{z_1}\in \mathcal M_{\rm loc} (\Omega\times (0,T);\R^d)$. Evaluating the above subsequence on $z_2$, i.e. $T_{j,z_2}$, we can extract a further (non relabelled)  subsequence such that $T_{j,z_2}\rightharpoonup \mu_{z_2}$ for some $\mu_{z_2}\in \mathcal M_{\rm loc} (\Omega\times (0,T);\R^d)$. Hence, by a diagonal argument, we find a subsequence $\{T_{n,z}\}_{n}$ and measures $\mu_{z_k} \in \mathcal{M}_{\rm loc}(\Omega \times (0,T); \R^d)$, such that 
$$
T_{n,z_k}\rightharpoonup \mu_{z_k}\qquad \forall k.
$$
We aim to extend $\{\mu_{z_k}\}_k$ to a map $\mu \in C^0(\overline{B}_1; \mathcal{M}_{\rm loc}(\Omega \times (0,T); \R^d))$ such that $T_{n,z} \rightharpoonup \mu_z$ for any $z \in B_1$.   Given any test function $\varphi \in C^0_c(\Omega \times (0,T))$, by a standard argument we check that $\langle T_{n,z},\varphi\rangle$ is a Cauchy sequence in $C^0(\overline B_1;\R^d)$. Indeed, fix $\e>0$ and by the equicontinuity estimate \eqref{est equicontinuity} find $\delta >0$ such that 
$$\abs{\langle T_{n,z}, \varphi \rangle - \langle T_{n,w}, \varphi \rangle } < \e \qquad \forall n\in \N \qquad \text{if } \abs{z-w} < \delta. $$
Clearly the family $\{B_\delta (z_k)\}_k$ covers $\overline B_1$. By compactness we find $N \in \N$ such that $\overline B_1 \subset \bigcup_{i=1}^N B_{\delta}(z_i)$. Then, by the weak convergence on $\{z_k\}_k$, we find $M \in \N$ such that 
$$\abs{ \langle T_{n_1, z_i}, \varphi \rangle - \langle T_{n_2, z_i}, \varphi \rangle} < \e \qquad \forall n_1, n_2 \geq M, \qquad \forall i = 1, \dots, N. $$
Hence, for any $z \in \overline{B}_1$, pick $j \in \{1, \dots, N\}$ such that $\abs{z-z_j}< \delta$. Thus, for $n_1, n_2 \geq M$ we have that 
\begin{align*}
    \abs{\langle T_{n_1, z} , \varphi \rangle - \langle T_{n_2, z}, \varphi \rangle } & \leq \abs{\langle T_{n_1, z} , \varphi \rangle - \langle T_{n_2, z_j}, \varphi \rangle } + \abs{\langle T_{n_1, z_j} , \varphi \rangle - \langle T_{n_2, z_j}, \varphi \rangle } 
    \\  & + \abs{\langle T_{n_2, z_j} , \varphi \rangle - \langle T_{n_2, z}, \varphi \rangle } < 3 \e. 
\end{align*}

Thus, there exists a vector $V_\varphi \in C^0(\overline B_1;\R^d)$ such that 
\begin{equation}
    \label{uniform conv to a vector}
    \lim_{n\rightarrow \infty} \sup_{|z|\leq 1} \left|\langle T_{n,z},\varphi\rangle-V_\varphi (z) \right|=0.
\end{equation}
The map $\varphi \mapsto V_\varphi (z)$ is linear and satisfies 
$$
\left| V_\varphi (z)\right| \lesssim \|\varphi \|_{L^\infty}\qquad \forall \varphi \in C^0_c(\Omega\times (0,T)).
$$
By Riesz theorem we find $\mu_z\in \mathcal M_{\rm loc} (\Omega\times (0,T);\R^d)$ such that 
$$
 V_\varphi (z)=\langle \mu_z,\varphi\rangle.
$$
It is immediate to see that $\mu_z$ is an extension of $\{\mu_{z_k}\}_k$.  Then, \eqref{uniform conv to a vector} proves that $\langle T_{n,z},\varphi\rangle\rightarrow \langle \mu_z,\varphi\rangle$ in $C^0(\overline B_1;\R^d)$, from which
$$
\langle D[u],\varphi\rangle= \frac14 \lim_{n\rightarrow \infty} \int_{B_1} \nabla \rho (z) \cdot T_{n,z} \,dz=\frac14 \int_{B_1} \nabla \rho (z) \cdot \langle \mu_z,\varphi \rangle \,dz,
$$
proving \eqref{eq: D_u formula}. Again by \eqref{uniform conv to a vector} we have $T_{n,z}\rightharpoonup \mu_z$, for every $z\in \overline B_1$, and since $T_{n,z}$ enjoys \eqref{est hp Besov p}-\eqref{est hp BV} if $u$ is assumed to satisfy \eqref{Besov_assumpt}-\eqref{BV_assumpt} respectively, the very same estimate transfers to $\mu_z$ by lower semicontinuity of the total variation with respect to the weak convergence of measures. This proves $(i)$. Since $T_{n,0}\equiv 0$ for all $n$, we also get $(ii)$. Finally, $(iii)$ directly follows by the change of variable $z\mapsto -z$, together with the fact that the kernel $\rho$ is chosen to be even.
\end{proof}

\cref{T:ons_crit} shows that, given any scalar-valued  test function $\varphi \in C^0_c(\Omega \times (0,T))$, the dissipation measure in Onsager's critical spaces is characterized as
$$
\langle D[u],\varphi \rangle = \frac14 \int_{B_1} \nabla \rho (z) \cdot V_\varphi(z) \,dz,
$$
for some vector $V_\varphi\in C^0(\overline B_1;\R^d)$, which is moreover $\gamma$-H\"older continuous in $z$, $\gamma$ depending on the Besov regularity of $u$. 
This leads to the following definition. 

\begin{definition}[Local averaged directional flux]
Let $\mu \in C^{0}(\overline B_1; \mathcal{M}_{\rm loc}(\Omega \times (0,T); \R^d ) )$ be any map given by \cref{T:ons_crit} and $\varphi \in C^0_c(\Omega \times (0,T))$. Given  $z\in B_1$, we say that 
$$
 V_\varphi(z) :=\langle \mu_z,\varphi\rangle
$$
is the local energy flux of $u$, averaged on the $\spt \varphi$, in the direction $z$.
\end{definition}

Note that here we are slightly abusing terminology, since $z$, being any element in the unit ball, is not normalized to $|z|=1$. 

Assume that $D[u]$ does not depend on $\rho$. In \cite{DR00} Duchon and Robert investigated the case in which the kernel is chosen to be radial.  
With this choice, our formula \eqref{DRI_formula} gives and additional equivalent expression. Indeed, letting $\rho(z)=\rho_{\rm rad} (|z|)$, by direct computations we deduce
$$
    \langle D[u],\varphi \rangle = -\frac14 \int_0^1 \rho'_{\rm rad} (r) \left( \int_{\partial B_r} V_\varphi \cdot n \, d\mathcal H^{d-1}\right) \,dr,
$$
$n$ being the inward unit normal. Alternatively, one could let 
$$
\rho \rightarrow \frac{1}{\omega_d} \mathds{1}_{B_1} ,\qquad \mathds{1}_{B_1}(z):=\begin{cases}
      1 \quad \text{if } z\in B_1\\
    0 \quad \text{otherwise}
    \end{cases}\,,\qquad \omega_d:=\mathcal H^d(B_1),
$$
to get 
$$
\langle D[u],\varphi \rangle = \frac{1}{4\omega_d}\int_{\partial B_1} V_\varphi \cdot n \,d\mathcal H^{d-1}.
$$

As a byproduct of the formula \eqref{DRI_formula} we get the following result. 

\begin{corollary}[Support restriction]
Denote by $\mathcal U_{x,t}\subset \Omega\times (0,T)$ any open set containing the point $(x,t)$, and assume that $D[u]$ does not depend on $\rho$. It holds
$$
\spt D[u]\subset \left\{ (x,t)\,:\, \forall \mathcal U_{x,t} \, \,\exists \varphi \in C^0_c (\mathcal U_{x,t}) \,\text{ s.t. } \,\inf_{\rho \in \mathcal K}\abs{\int_{B_1}\nabla \rho\cdot V_\varphi }\neq 0 \right\}.
$$
\end{corollary}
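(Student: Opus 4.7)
The plan is to argue by contrapositive: I will show that if $(x,t)$ fails to lie in the right-hand set, then $(x,t) \notin \spt D[u]$. By negation, failing to lie in the right-hand set means there exists some open neighborhood $\mathcal U_{x,t}$ of $(x,t)$ on which every test function satisfies
\[
\inf_{\rho \in \mathcal K}\abs{\int_{B_1}\nabla \rho(z)\cdot V_\varphi(z)\,dz}=0 \qquad \forall \varphi \in C^0_c(\mathcal U_{x,t}).
\]
The goal is to promote this scalar identity into the statement that $D[u]$ vanishes as a measure on $\mathcal U_{x,t}$, hence $(x,t)$ is outside the support.

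The key input is formula \eqref{DRI_formula} from \cref{T:ons_crit}, which is available precisely because we are assuming $D[u]$ to be independent of $\rho$. Unravelling the notation $V_\varphi(z)=\langle \mu_z,\varphi\rangle$, the identity reads
\[
\abs{\langle D[u], \varphi\rangle} \;=\; \frac{1}{4}\inf_{\rho \in \mathcal K}\abs{\int_{B_1}\nabla \rho(z)\cdot V_\varphi(z)\,dz} \qquad \forall\, \varphi \in C^0_c(\Omega\times(0,T)).
\]
Combining this with the contrapositive assumption immediately yields $\langle D[u],\varphi\rangle=0$ for every $\varphi\in C^0_c(\mathcal U_{x,t})$.

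To finish, I would invoke the fact that $D[u]$ is a signed Radon measure in $\mathcal M_{\rm loc}(\Omega\times(0,T))$ (as built into the hypotheses of \cref{T:ons_crit}, where $D[u]$ is obtained as a weak limit of the $L^1_{\rm loc}$-bounded sequence $D_\eps[u]$). For such a measure, vanishing against every continuous compactly supported test function on $\mathcal U_{x,t}$ is equivalent, by the standard Riesz representation / inner regularity characterization, to $|D[u]|(\mathcal U_{x,t})=0$. By definition of support, this gives $(x,t)\notin \spt D[u]$, as required.

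There is essentially no substantive obstacle here: the corollary is a direct unpacking of \eqref{DRI_formula} together with the definition of the support of a Radon measure. The only minor point worth checking is the implication \emph{pairing zero against $C^0_c$ $\Longrightarrow$ measure-theoretic vanishing}, but this is a generic fact about Radon measures and requires no extra structure beyond what \cref{T:ons_crit} already provides.
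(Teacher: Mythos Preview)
Your proof is correct and is precisely the argument the paper has in mind: the corollary is stated there without proof, introduced merely as ``a byproduct of the formula \eqref{DRI_formula}'', and your contrapositive unpacking of that formula together with the definition of the support of a Radon measure is exactly what is required.
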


It is clear that under any of the assumptions \eqref{Besov_assumpt}-\eqref{BV_assumpt}, since $\{D_\eps [u]\}_\e$ turns out to be a bounded sequence in $L^1_{\rm loc}(\Omega\times (0,T))$, then $D[u]\in \mathcal{M}_{\rm loc}(\Omega\times (0,T))$. Indeed, under the $BV\cap L^\infty$ assumption \eqref{BV_assumpt}, we have that
$$
\int |\delta_{\eps z} u(x,t)|^3\, dx \, dt\lesssim \eps.
$$
However, we remark that $D_\eps[u]$ is bounded in $L^1_{\rm loc}$ provided that
\begin{equation}\label{symmetric_trilin}
\int |\nabla \rho(z)\cdot \delta_{\eps z} u(x,t)||\delta_{\eps z} u(x,t)|^2\, dx \,dt\lesssim \eps.
\end{equation}
If we restrict the kernels $\rho$ to be radial, i.e. 
$$
\rho\in \mathcal K_{\rm rad}:=\left\{\rho\in \mathcal K\, :\, \rho\, \text{ radial} \right\},
$$
an assumption on the symmetric part of $\nabla u$ is enough to deduce \eqref{symmetric_trilin}, as opposite to the one on the full gradient in \eqref{BV_assumpt}. Thus, instead of $BV$, it is enough to consider $BD$, the space of \quotes{bounded deformation} vector fields. However, 
the assumption on the symmetric gradient does not seem enough to get equicontinuity with respect to $z$ of the trilinear expression which is used to generate the measure $\mu_z$. For this reason, in this case, we can only conclude that $D[u]\in \mathcal M_{\rm loc} (\Omega\times (0,T))$. Indeed, building on \eqref{BD_est_increment} from \cref{L:BD_gradient}, we have the following result.

\begin{proposition}[Dissipation measure in $BD$]\label{P:measure_in_BD}
Let $u:\Omega\times (0,T)\rightarrow \R^d$ be a vector field such that 
\begin{equation}\label{BD_assumpt}
u\in L^1(I;BD(O))\cap L^{\infty}(O \times I),
\end{equation}
for all $I\subset\joinrel\subset (0,T)$ and $O\subset\joinrel\subset \Omega$ open sets. Then, $\{D_\eps[u]\}_\e$ is a bounded sequence in $L^1_{\rm loc}(\Omega\times (0,T))$, whenever $\rho\in \mathcal K_{\rm rad}$. In particular $D[u]\in \mathcal M_{\rm loc} (\Omega\times (0,T))$.
\end{proposition}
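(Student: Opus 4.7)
The plan is to exploit the radial structure of $\rho \in \mathcal K_{\rm rad}$ to rewrite the trilinear integrand appearing in $D_\eps[u]$ in a form that pairs $\delta_{\eps z}u$ with $z$. Writing $\rho(z) = \rho_{\rm rad}(|z|)$ we have
$$ \nabla \rho(z) = \rho'_{\rm rad}(|z|)\,\frac{z}{|z|}, $$
so that
$$ \nabla \rho(z) \cdot \delta_{\eps z} u(x,t) = \frac{\rho'_{\rm rad}(|z|)}{|z|}\, z\cdot \delta_{\eps z} u(x,t). $$
This is the key step: the $L^\infty$ piece of the integrand $|\delta_{\eps z} u|^2$ is crudely bounded by $4\|u\|_{L^\infty(O\times I)}^2$, while the remaining factor $z\cdot \delta_{\eps z}u$ is precisely the quantity controlled by the symmetric gradient in the $BD$ increment estimate \eqref{BD_est_increment} from \cref{L:BD_gradient}.

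Fix any $A\times I \subset\joinrel\subset O \times I \subset\joinrel\subset \Omega\times(0,T)$ and $\eps < \dist(\overline A, \partial O)$. Applying first the pointwise bound on $|\delta_{\eps z} u|^2$ and then \eqref{BD_est_increment} for almost every $t\in I$, we get
\begin{align*}
\int_A |D_\eps[u](x,t)|\,dx
&\leq \frac{\|u\|_{L^\infty(O\times I)}^2}{\eps}\int_{B_1} \frac{|\rho'_{\rm rad}(|z|)|}{|z|} \int_A |z\cdot \delta_{\eps z} u(x,t)|\,dx\,dz \\
&\leq \|u\|_{L^\infty(O\times I)}^2\, |\nabla^s u(\cdot,t)|\!\left(\overline{(A)_\eps}\right) \int_{B_1} |\rho'_{\rm rad}(|z|)|\,|z|\,dz,
\end{align*}
where the outer $z$-integral is finite since $\rho$ is smooth and compactly supported in $B_1$. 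Integrating in $t\in I$ and using $u\in L^1(I;BD(O))$ yields the uniform (in $\eps$) bound
$$ \|D_\eps[u]\|_{L^1(A\times I)} \leq C(\rho)\, \|u\|_{L^\infty(O\times I)}^2 \, \|\nabla^s u\|_{L^1(I;\mathcal M(\overline O))}, $$
provided $\eps$ is small enough that $\overline{(A)_\eps}\subset O$. Hence $\{D_\eps[u]\}_\eps$ is bounded in $L^1_{\rm loc}(\Omega\times(0,T))$, and by weak-$\ast$ sequential compactness of equi-bounded sequences in $\mathcal M_{\rm loc}$, any distributional limit $D[u]$ belongs to $\mathcal M_{\rm loc}(\Omega\times(0,T))$.

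The main point to flag is the sharp contrast with \cref{T:ons_crit} under the $BV\cap L^\infty$ assumption: the argument above only gives equi-boundedness of the scalar sequence $D_\eps[u]$, not equicontinuity of the vector-valued trilinear maps $T_{\eps,z}$ in the parameter $z$. Indeed, the telescopic expansion \eqref{trilin expansion} would require controlling increments $\delta_{\eps z_1} u - \delta_{\eps z_2}u$ through the \emph{full} distributional gradient, which is exactly what the $BD$ hypothesis fails to provide. For this reason, passing from $BV$ to $BD$ one loses the family $\{\mu_z\}_z$ granting the kernel-explicit formula \eqref{eq: D_u formula}, and one only retains the softer conclusion that $D[u]$ is a Radon measure (for radial convolution kernels).
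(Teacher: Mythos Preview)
Your proof is correct and follows essentially the same approach as the paper: write $\nabla\rho(z)=\rho'_{\rm rad}(|z|)\,z/|z|$ for radial kernels, bound $|\delta_{\eps z}u|^2$ in $L^\infty$, and control $\int_A |z\cdot\delta_{\eps z}u|\,dx$ via the $BD$ increment estimate \eqref{BD_est_increment}. The resulting $z$-integral $\int_{B_1}|\rho'_{\rm rad}(|z|)|\,|z|\,dz$ is the same as the paper's $\int_0^1|\rho'_{\rm rad}(r)|r^d\,dr$ up to a dimensional surface constant, and your closing remark on the loss of $z$-equicontinuity matches the paper's own discussion preceding the proposition.
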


\begin{proof}

Pick any $\rho(z)=\rho_{\rm rad}(|z|)\in \mathcal K_{\rm rad}$. Then $\nabla \rho (z)=\frac{z}{|z|}\rho'_{\rm rad} (|z|)$, from which, for any  $I\subset\joinrel\subset (0,T)$ and $O\subset\joinrel\subset \Omega$, by  \eqref{BD_est_increment} we get
 \begin{align*}
     \int_{O\times I} |D_\eps [u]|\, dx \, dt&\leq \|u\|^2_{L^\infty((O)_\e \times I)}\int_{B_1}\frac{|\rho'_{\rm rad}|}{|z|}\left(\int_{O\times I} \frac{|z\cdot \delta_{\eps z} u(x,t)|}{\eps}\,dx \, dt \right)\,dz\\
     &\leq C \|u\|^2_{L^\infty((O)_\eps\times I)} \int_I |\nabla^s u(\cdot,t)|\left(\overline{(O)_\eps}\right)\,dt\int_0^1 |\rho'_{\rm rad}(r)| r^{d}\,dr,
 \end{align*}
 for all $2\eps<\dist( O,\partial \Omega)$ and some dimensional constant $C>0$. The right hand side of the previous estimate stays bounded as $\eps\rightarrow 0^+$, and the proof is concluded.

\end{proof}

\section{Interior energy conservation in $BV\cap L^\infty$}\label{S:BV_cons}
Here we prove \cref{T:cons_local_BV}. By \cref{P:Local energy conservation} it is enough to prove that that $D[u]$ satisfies \eqref{cons_cond_general} under the assumption \eqref{BV_assumpt}.

\begin{proof}[Proof of \cref{T:cons_local_BV}]
We prove that $D[u]$ satisfies \eqref{cons_cond_general}. Fix any $O\subset\joinrel\subset K\subset\joinrel\subset \Omega\times (0,T)$, $O$ open, $K$ compact. Pick $\varphi \in C^\infty_c(O)$ and set $S:=\spt \varphi$.
Then, for any convolution kernel $\rho \in \mathcal{K}$, we have 
\begin{align*}
    \abs{\langle D[u] , \varphi \rangle } & = \lim_{\e \to 0} \abs{ \int_S  \left(\int_{B_1} \nabla \rho(z) \cdot \frac{\delta_{\e z} u (x,t)}{4 \e} \abs{\delta_{\e z} u(x,t)}^2 \, dz \right)\varphi(x,t) \, dx \, dt }
    \\ & \lesssim \norm{u}_{L^\infty(K) }^2 \norm{\varphi}_{L^\infty(O)} \limsup_{\e \to 0} \int_{B_1}\left( \int_{S}  \abs{\frac{\nabla \rho(z) \cdot \delta_{\e z} u(x,t)}{\e}} \, dx \, dt \right) dz
    \\ & \lesssim \norm{\varphi}_{L^\infty(O)} \int_{B_1}\left( \limsup_{\e \to 0} \int_{S}   \abs{\frac{\nabla \rho(z) \cdot \delta_{\e z} u(x,t)}{\e}} \, dx \, dt \right) dz,
\end{align*}
where in the last inequality we have used Fatou's lemma. For any $z \in B_1$, define the scalar function $v_z:= u\cdot \nabla_z\rho(z)$, $v_z:\Omega\times (0,T)\rightarrow \R$. Note that the distributional derivative of $v_z$ is given by
$$
\partial_{x_i} v_z(\cdot,t)= \partial_{x_i} u^j \partial_{z_j} \rho(z)=M_{ji}(\cdot,t)\partial_{z_j} \rho(z)|\nabla u(\cdot,t)|,
$$
for a.e. $t\in(0,T)$, where in the last equality we have used the Radon--Nikodym theorem to decompose the matrix-valued measure $\nabla u(\cdot, t)$ with respect to its variation $|\nabla u(\cdot, t)|$. Since $\div u=0$ we deduce that
\begin{equation}
    \label{trace free}
    \trace M (\cdot,t)=0\qquad  \text{for }|\nabla u(\cdot,t)|\text{-a.e. } x\in\Omega , \qquad \text{for a.e. } t\in (0,T). 
\end{equation}
By \eqref{BV_est_increment} applied on $v_z$ we get 
\begin{align*}
   \limsup_{\e \to 0} \int_{S}   \abs{\frac{\nabla \rho(z) \cdot \delta_{\e z} u(x,t)}{\e}} \, dx \, dt&\leq \int_0^T |z\cdot \nabla_x v_z|(\{ x\,:\, (x,t)\in S\})\,dt\\
   &=\int_S |M(x,t) z \cdot \nabla \rho (z) | \,d\nu,
\end{align*}
where the measure $\nu$ is defined by
$$
\nu(A):=\int_0^T |\nabla u(\cdot,t)|(\{ x\,:\, (x,t)\in A\})\,dt\qquad \forall A\subset\joinrel\subset \Omega\times (0,T).
$$
The vector $\eta_{x,t}(z):=M(x,t) z$ is divergence-free for $\nu$-a.e. $(x,t)$ by \eqref{trace free} and, together with $\nu$, it satisfies the assumptions of \cref{P:Local energy conservation} with $m=d+1$ and $U=\Omega\times (0,T)$.
\end{proof}

We remark that if $u\in L^1_t BD_x\cap L^\infty_{x,t}$, by using \eqref{BD_est_increment}, one can prove an estimate similar to \eqref{cons_cond_general} restricting to radial kernels. However, considering $\rho\in \mathcal K_{\rm rad}$ prevents the use of the Alberti's optimization of \cref{L:alberti}, being the latter anisotropic. Details are left to the reader.

\begin{remark}[Necessity of the Duchon--Robert formula]\label{R:DR_vs_CET}
The formula \eqref{DR_eps_formula} proposed by Duchon and Robert in \cite{DR00} is not the only way to approximate the distribution $D[u]$. Indeed, following the original Constantin, E and Titi approach, it has been proved in \cite{I17} that also $R_\eps :\nabla u_\eps$ has the same distributional limit $D[u]$, being $R_\eps =u_\eps \otimes u_\eps -(u\otimes u)_\eps$ the quadratic Reynolds stress tensor arising from mollifying in space the equation at scale $\eps$. However, from such approximation does not seem possible to achieve the bound \eqref{ambrosio_structure}, which then prevents the optimization of the kernel based on the incompressibility.
\end{remark}

\section{Energy conservation on Lipschitz bounded domains}\label{S:boundary}

This section aims to prove global energy conservation on bounded domains $\Omega\subset \R^d$, with Lipschitz boundary, that is \cref{T:energy cons bounded dom intro} together with its more general version \cref{T:en cons bounded domain more general}. To begin, we clarify on the precise meaning of kinetic energy conservation in this setting.
\begin{definition}[Kinetic energy conservation]\label{D:kinetic energy cons}
Given $u:\Omega\times (0,T)\rightarrow \R^d$ such that $u\in L^2(\Omega\times (0,T))$ we define its kinetic energy as
$$
e_u(t):=\frac{1}{2}\int_\Omega |u(x,t)|^2\, dx.
$$
Moreover, we say that the kinetic energy of $u$ is conserved if
$$
\langle e_u,\phi'\rangle=0 \qquad \forall \phi \in C^\infty_c(0,T).
$$
\end{definition}
It is clear that if $u\in C^0([0,T];L^2(\Omega))$, then  kinetic energy conservation is equivalent to $e_u(t)=e_u(0)$ for all $t\in[0,T]$.

\subsection{Normal Lebesgue boundary trace}
 In the spirit of traces for $BV$ function given in \cref{T:trace_in_BV}, we now introduce a notion of trace for the normal component of a vector field $u:\Omega\rightarrow\R^d$, that we call \quotes{normal Lebesgue trace}. Such quantity will play a major role in the boundary analysis.
Recall our notation
$$
d_A(x):=\dist (x,A).
$$

\begin{definition}[Normal Lebesgue boundary trace]\label{D:Leb normal trace}
     Let $\Omega\subset \R^d$ be a Lipschitz domain and $u\in L^1(\Omega;\R^d)$ a vector field. We say that $u$ has a Lebesgue normal trace on $\partial \Omega$ if there exists a function $u_n^{\partial \Omega}\in L^1(\partial \Omega;\mathcal H^{d-1})$ such that, for every sequence $r_k\rightarrow 0^+$, it holds
    $$
    \lim_{k\rightarrow \infty}\frac{1}{r_k^d} \int_{B_{r_k} (x)\cap\Omega} \left|(u\cdot \nabla d_{\partial \Omega})(y)- u_n^{\partial \Omega}(x)\right|\,dy=0\qquad \text{for } \mathcal{H}^{d-1}\text{-a.e. } x\in \partial \Omega.
    $$
\end{definition} 

The choice of taking sequence $r_k\rightarrow 0^+$ instead of continuous radii $r\rightarrow 0^+$ as in \cref{T:trace_in_BV} is made for convenience. It is easy to see that \cref{D:Leb normal trace} is a slightly weaker notion with respect to that using continuous radii, i.e. 
\begin{equation}\label{nomal_trace_cont_radii}
\lim_{r\rightarrow 0^+}\frac{1}{r^d} \int_{B_r (x)\cap\Omega} \left|(u\cdot \nabla d_{\partial \Omega})(y)- u_n^{\partial \Omega}(x)\right|\,dy=0\qquad \text{for } \mathcal{H}^{d-1}\text{-a.e. } x\in \partial \Omega.
\end{equation}
Indeed, letting
$$
f_r(x):=\frac{1}{r^d} \int_{B_r (x)\cap\Omega} \left|(u\cdot \nabla d_{\partial \Omega})(y)- u_n^{\partial \Omega}(x)\right|\,dy,
$$
\eqref{nomal_trace_cont_radii} implies that 
\begin{align*}
\mathcal H^{d-1}\left( \left\{ x\in \partial\Omega\,:\, \lim_{r\rightarrow 0^+} f_r (x)=0\right\}\right) & = \mathcal H^{d-1}\left( \left\{ x\in \partial\Omega\,:\, \forall r_k\rightarrow 0^+\,\lim_{k\rightarrow \infty} f_{r_k} (x)=0\right\}\right)
\\ & =\mathcal H^{d-1} (\partial \Omega).  
\end{align*}
But clearly, if $r_j\rightarrow 0^+$ is any sequence, we have
$$
\left\{ x\in \partial\Omega\,:\, \forall r_k\rightarrow 0^+\,\lim_{k\rightarrow \infty} f_{r_k} (x)=0\right\}\subset \left\{ x\in \partial\Omega\,:\, \lim_{j\rightarrow \infty} f_{r_j} (x)=0\right\},
$$
which implies 
$$
\mathcal H^{d-1} \left( \left\{ x\in \partial\Omega\,:\, \lim_{j\rightarrow \infty} f_{r_j} (x)=0\right\}\right)=\mathcal H^{d-1} (\partial \Omega)\qquad \forall r_j\rightarrow 0^+.
$$
Whenever it exists, the normal Lebesgue trace is unique. Indeed, since $\Omega$ has Lipschitz boundary, see for instance \cite{AFP00}*{Theorem 3.61}, we have 
$$
\lim_{r\rightarrow 0^+}\frac{\mathcal{H}^{d}(\Omega\cap B_r(x))}{\mathcal{H}^{d}(B_r(x))}=\frac{1}{2}\qquad \text{for } \mathcal{H}^{d-1}\text{-a.e. } x\in \partial \Omega.
$$
Thus, if $(u_n^{\partial \Omega})_1,(u_n^{\partial \Omega})_2\in L^1(\partial \Omega;\mathcal H^{d-1})$ are two Lebesgue normal traces of $u$ in the sense of \cref{D:Leb normal trace}, then, for $\mathcal{H}^{d-1}$-a.e.  $x\in \partial \Omega$, we have
\begin{align*}
    \frac{\left|(u_n^{\partial \Omega})_1(x)-(u_n^{\partial \Omega})_2(x)\right|}{\omega_d^{-1}2}&=\lim_{k\rightarrow \infty}\frac{1}{r_k^d}\int_{\Omega\cap B_{r_k}(x)}\left|(u_n^{\partial \Omega})_1(x)-(u_n^{\partial \Omega})_2(x)\right|\,dy\\
    &\leq\lim_{k\rightarrow \infty}\frac{1}{r_k^d}\int_{\Omega\cap B_{r_k}(x)}\left|(u\cdot \nabla d_{\partial\Omega})(y)-(u_n^{\partial \Omega})_1(x)\right|\,dy\\
    &+\lim_{k\rightarrow \infty}\frac{1}{r_k^d}\int_{\Omega\cap B_{r_k}(x)}\left|(u\cdot \nabla d_{\partial\Omega})(y)-(u_n^{\partial \Omega})_2(x)\right|\,dy\\
    &=0.
\end{align*}
In the computations above we have used that $\mathcal H^d(B_r(x))=\omega_d r^d$.

We refer the interested reader to the works \cites{CT05,CTZ07,PT08,CTZ09,CT11,CF99} which build a notion of normal trace for \quotes{measure divergence} vector fields, together with several applications to systems of conservation laws, in setting with very low regularity.

In the next proposition we prove that having zero normal Lebesgue trace implies that the term appearing in the proof of the energy conservation on bounded domains vanishes. In fact, modulo considering subsequences, the two properties are substantially equivalent.
\begin{proposition}\label{P:normal trace absol converg}
Suppose $u\in L^\infty(\Omega;\R^d)$ has zero Lebesgue normal boundary trace, that is  $u_n^{\partial \Omega}\equiv 0$. Then 
\begin{equation}\label{vanish_boundary_flux}
\lim_{\eps \rightarrow 0^+}\int_\Omega |u\cdot \nabla \varphi_\eps|\,dx=0,
\end{equation}
where 
\begin{equation} \label{eq:cut_off}
\varphi_\eps (x):=\left\{\begin{array}{l}
 1 \quad  \quad \quad \quad \quad \,\text{if } x\in \Omega^{\eps}\\
\eps^{-1} d_{\partial \Omega}(x)\quad \text{if } x\in \Omega \setminus \Omega^{\eps}
\end{array}\right.\quad \text{ with } \quad \Omega^\eps :=\left\{x\in \Omega\, :\, d_{\partial \Omega}(x)>\eps \right\}.
\end{equation}
Moreover, if \eqref{vanish_boundary_flux} is true, then, for every sequence $r_k\rightarrow 0^+$, it must hold
\begin{equation}
    \label{normale_trace_liminf}
    \liminf_{k\rightarrow \infty}\frac{1}{r_k^d} \int_{B_{r_k} (x)\cap\Omega} \left|(u\cdot \nabla d_{\partial \Omega})(y)\right|\,dy=0\qquad \text{for } \mathcal{H}^{d-1}\text{-a.e. } x\in \partial \Omega.
\end{equation}
\end{proposition}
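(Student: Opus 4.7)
The statement consists of two implications that I would handle separately, both reducing to the same ``tubular integral''
$$\int_\Omega |u\cdot \nabla\varphi_\eps|\,dx = \frac{1}{\eps}\int_{\Omega\cap\{d_{\partial\Omega}<\eps\}} g\,dy,\qquad g := |u\cdot \nabla d_{\partial\Omega}|\in L^\infty(\Omega),$$
obtained by computing $\nabla\varphi_\eps = \eps^{-1}\nabla d_{\partial\Omega}\mathbf{1}_{\Omega\setminus\Omega^\eps}$ and invoking $|\nabla d_{\partial\Omega}|=1$ a.e.\ from \cref{L:dist funct}. The domain of integration is exactly $(\partial\Omega)_\eps\cap\Omega$, which connects the statement to the Minkowski content results of \cref{P:federer minkowski rectifiable}.

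\textbf{First implication ($u_n^{\partial\Omega}\equiv 0 \Rightarrow$ \eqref{vanish_boundary_flux}).} Given $\delta>0$, using that $\lim_{r\to 0^+} f_r(x)=0$ for $\mathcal H^{d-1}$-a.e.\ $x\in\partial\Omega$ (where $f_r(x):=r^{-d}\int_{B_r(x)\cap\Omega} g\,dy$), I would apply Egorov combined with inner regularity of $\mathcal H^{d-1}$ on $\partial\Omega$ to produce a \emph{closed} set $C_\delta\subset\partial\Omega$ with $\mathcal H^{d-1}(\partial\Omega\setminus C_\delta)<\delta$ on which the convergence is uniform. I would then split
$$(\partial\Omega)_\eps\cap\Omega \;\subset\; \bigl[(C_\delta)_\eps\cap\Omega\bigr] \cup \bigl[\bigl((\partial\Omega)_\eps\setminus (C_\delta)_\eps\bigr)\cap\Omega\bigr].$$
For the first (``good'') piece, a Vitali $5r$-covering of $C_\delta$ by balls $\{B_\eps(x_i)\}$ with $x_i\in C_\delta$ and disjoint halves yields a number $N_\eps$ of balls with $N_\eps\eps^{d-1}\lesssim \mathcal H^{d-1}(C_\delta)+o(1)$ via \cref{P:federer minkowski rectifiable} applied to $C_\delta$; rescaling each piece to an $f_{5\eps}(x_i)$-term and using uniform convergence gives $\eps^{-1}\int_{(C_\delta)_\eps\cap\Omega}g\,dy\to 0$. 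For the second (``bad'') piece, I would apply \cref{P:federer minkowski rectifiable} to the two closed sets $\partial\Omega$ and $C_\delta$ and subtract to get
$$\limsup_{\eps\to 0^+}\frac{\mathcal H^d\bigl((\partial\Omega)_\eps\setminus(C_\delta)_\eps\bigr)}{\eps} \;=\; \omega_1\,\mathcal H^{d-1}(\partial\Omega\setminus C_\delta) \;<\; \omega_1\delta,$$
which, together with $g\in L^\infty$, controls the bad piece by $\|u\|_\infty\omega_1\delta$. Letting $\delta\to 0$ concludes.

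\textbf{Second implication (\eqref{vanish_boundary_flux} $\Rightarrow$ \eqref{normale_trace_liminf}).} This is much softer. For a fixed sequence $r_k\to 0^+$, I would use Fatou and Fubini to estimate
$$\int_{\partial\Omega}\liminf_{k\to\infty} f_{r_k}(x)\,d\mathcal H^{d-1}(x) \;\le\; \liminf_{k\to\infty}\frac{1}{r_k^d}\int_\Omega g(y)\,\mathcal H^{d-1}\bigl(B_{r_k}(y)\cap\partial\Omega\bigr)\,dy.$$
The bounded Lipschitz character of $\partial\Omega$ gives the uniform Ahlfors-type upper bound $\mathcal H^{d-1}(B_r(y)\cap\partial\Omega)\le C r^{d-1}$, and the inner integrand is supported on $(\partial\Omega)_{r_k}\cap\Omega$, whence the right-hand side is bounded by $C\liminf_k \frac{1}{r_k}\int_{(\partial\Omega)_{r_k}\cap\Omega}g\,dy$, which vanishes by \eqref{vanish_boundary_flux}. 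Since the integrand on the left is nonnegative, it must vanish $\mathcal H^{d-1}$-a.e., giving \eqref{normale_trace_liminf}.

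\textbf{Main obstacle.} The delicate point is the first implication: Egorov produces a closed set $C_\delta$, but $\partial\Omega\setminus C_\delta$ is only relatively open in $\partial\Omega$, so \cref{P:federer minkowski rectifiable} does not directly bound $\mathcal H^d((\partial\Omega\setminus C_\delta)_\eps)/\eps$. The resolution is to realize that one does not need to control the tubular neighborhood of the bad set in isolation, but only the \emph{set difference} $(\partial\Omega)_\eps\setminus(C_\delta)_\eps$, since any point of $(\partial\Omega)_\eps$ lying inside $(C_\delta)_\eps$ is already handled by the uniform convergence on $C_\delta$. The closed-set Minkowski content then applies to both $\partial\Omega$ and $C_\delta$, and additivity in the limit yields the desired $O(\delta)$ bound. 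The remaining technical care is the uniform $N_\eps\eps^{d-1}\lesssim 1$ estimate in the Vitali step, which in turn uses the Minkowski content upper bound for $C_\delta$.
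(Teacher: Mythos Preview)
Your treatment of the first implication is essentially the paper's argument: Egorov to a closed set $C_\delta$, Vitali cover to handle the ``good'' tube $(C_\delta)_\eps\cap\Omega$, and subtraction of Minkowski contents (via \cref{P:federer minkowski rectifiable} applied separately to $\partial\Omega$ and $C_\delta$) to control the ``bad'' remainder. Your identification of the obstacle and its resolution --- that one only needs to bound the set difference $(\partial\Omega)_\eps\setminus (C_\delta)_\eps$, not the tube of $\partial\Omega\setminus C_\delta$ --- matches the paper exactly.

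For the second implication your route is genuinely different and more elementary. The paper argues by contradiction: assuming \eqref{normale_trace_liminf} fails on a set of positive $\mathcal H^{d-1}$-measure, it invokes the one-sided Egorov lemma (\cref{L:egorov_one_side}) to produce a closed $M\subset\partial\Omega$ on which $\inf_{x\in M} f_{r_k}(x)$ stays bounded below along a subsequence, then uses a Vitali covering of $M$ together with the Minkowski content lower bound \eqref{lower bound half density point} to force $\int_\Omega|u\cdot\nabla\varphi_{r_k}|\,dy$ to be bounded below, contradicting \eqref{vanish_boundary_flux}. Your Fatou--Fubini argument bypasses all of this: you directly integrate $f_{r_k}$ over $\partial\Omega$, swap integrals, and use the uniform Ahlfors upper bound $\mathcal H^{d-1}(B_r(y)\cap\partial\Omega)\le Cr^{d-1}$ (standard for bounded Lipschitz boundaries) to dominate by $\frac{C}{r_k}\int_{(\partial\Omega)_{r_k}\cap\Omega} g\,dy$. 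This is cleaner and avoids the contradiction machinery entirely; the price is that it imports the Ahlfors bound, which the paper does not state but is well known. The paper's approach, by contrast, is self-contained within the tools already developed (notably \cref{L:egorov_one_side} and \cref{P:federer minkowski rectifiable}).
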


\begin{remark}\label{R:trace is bounded}
We remark that as soon as $u\in L^\infty(\Omega;\R^d)$ and it admits a Lebesgue normal trace in the sense of \cref{D:Leb normal trace}, then necessarily $u_n^{\partial \Omega}\in L^\infty(\partial \Omega;\mathcal H^{d-1})$. Indeed, $u_n^{\partial \Omega}$ is the point-wise limit, as $k\rightarrow \infty$, of the sequence
$$
h_k(x):=\frac{1}{r_k^d}\int_{\Omega\cap B_{r_k}(x)} (u\cdot \nabla d_{\partial \Omega})(y) \, dy,
$$
which is bounded in $L^\infty(\partial \Omega;\mathcal{H}^{d-1})$ if $u\in L^\infty(\Omega;\R^d)$.
\end{remark}
\begin{proof}
We prove the first implication. We fix a sequence $\eps_k\rightarrow 0^+$. We have to show that
\begin{equation}\label{claim 1}
   \lim_{k\rightarrow \infty} I_{\eps_k}:= \lim_{k\rightarrow \infty}\frac{1}{\eps_k} \int_{\Omega\setminus \Omega^{\eps_k}} \left|(u\cdot \nabla d_{\partial \Omega})(y)\right|\,dy=0.
    \end{equation}
Define the sequence of functions $f_{\eps_k}:\partial \Omega\rightarrow\R$ as
$$
f_{\eps_k}(x):=\frac{1}{\eps_k^d} \int_{B_{\eps_k}(x)\cap\Omega} \left|(u\cdot \nabla d_{\partial \Omega})(y)\right|\,dy.
$$
By assumption, $f_{5\eps_k}(x)\rightarrow 0$ for $\mathcal H^{d-1}$-a.e. $x\in \partial\Omega$. Fix any $\delta>0$. By Egorov's theorem we find $A_\delta \subset \partial \Omega$, closed in the induced topology on $\partial\Omega$, such that $\mathcal{H}^{d-1}(\partial \Omega\setminus A_\delta)<\delta$ and $f_{5\eps_k}\rightarrow 0$ uniformly on $A_\delta$. By the uniform convergence on $A_\delta$, we can find $k_0=k_0(\delta)>0$ such that, for all $k\geq k_0$  it holds
\begin{equation}
    \label{unif conv Adelta}
    \sup_{\substack{  x\in A_\delta }} f_{5\eps_k}(x)\leq \delta.
\end{equation}
For any fixed $k\geq k_0$ consider the family of balls $\left\{ B_{\eps_k} (x)\right\}_{x\in A_\delta}$. By Vitali's covering theorem we find a finite and disjoint sub-family of balls $\mathcal F:=\{B_{\eps_k} (x_j)\}_{j\in J_{\eps_k}}$,  such that 
\begin{equation*}
   A_\delta\subset (A_\delta)_{\eps_k}\subset  \bigcup_{j\in J_{\eps_k} }B_{5\eps_k} (x_j).
    \end{equation*}
 Here, and throughout this whole section, we will use the notation $(A)_\eps$ to denote the $\eps$-tubular neighbourhood of a set $A$, as already introduced in \cref{S:tools}.
    Since $\mathcal F$ is a disjoint family and $\partial\Omega$ has Minkowski dimension $d-1$, by possibly choosing $k_0$ even larger, it must hold 
    \begin{equation}
        \label{cardinality bounded}
        \#(\mathcal F)\leq c \eps_k^{1-d},
    \end{equation}
    for some constant $c>0$ which depends only  on $\Omega$.
Thus we split
$$
I_{\eps_k}=\frac{1}{\eps_k} \int_{(A_\delta)_{\eps_k}\cap \Omega} \left|(u\cdot \nabla d_{\partial \Omega})(y)\right|\,dy+\frac{1}{\eps_k} \int_{(\Omega\setminus\Omega^{\eps_k})\setminus(A_\delta)_{\eps_k}} \left|(u\cdot \nabla d_{\partial \Omega})(y)\right|\,dy=:I^1_{\eps_k}+I^2_{\eps_k}.
$$
By \eqref{unif conv Adelta} and \eqref{cardinality bounded} we have 
$$
I^1_{\eps_k}\leq \frac{1}{\eps_k}\sum_{j\in J_{\eps_k}} \int_{B_{5\eps_k}(x_j)\cap \Omega} \left|(u\cdot \nabla d_{\partial \Omega})(y)\right|\,dy=5^d\eps_k^{d-1} \sum_{j\in J_{\eps_k}} f_{5\eps_k} (x_j)\leq C\delta.
$$
Moreover, since $u\in L^\infty(\Omega;\R^d)$, we can bound
$$
I^2_{\eps_k} \leq C \frac{1}{\eps_k}\mathcal H^{d}\left( (\Omega\setminus\Omega^{\eps_k})\setminus(A_\delta)_{\eps_k}\right)\leq C \frac{1}{\eps_k}\mathcal H^{d}\left( (\partial \Omega)_{\eps_k}\setminus(A_\delta)_{\eps_k}\right).
$$
Since both $\partial \Omega, A_\delta\subset \R^d$  are closed $(d-1)$-rectifiable sets, by  \eqref{minkow=hausd} from \cref{P:federer minkowski rectifiable} we have 
$$
\lim_{\eps\rightarrow 0^+}\frac{\mathcal H^{d}((\partial \Omega)_\eps)}{\eps}= c \mathcal{H}^{d-1} (\partial \Omega)\quad \text{and} \quad \lim_{\eps\rightarrow 0^+}\frac{\mathcal H^{d}((A_\delta)_\eps)}{\eps}= c \mathcal{H}^{d-1} (A_\delta)
$$
for some purely dimensional constant $c>0$. Thus, since $(\partial\Omega)_\eps=(A_\delta)_\eps\cup \left((\partial\Omega)_\eps\setminus (A_\delta)_\eps\right)$, we deduce
$$
\lim_{\eps\rightarrow 0^+}\frac{\mathcal H^{d}((\partial \Omega)_\eps\setminus(A_\delta)_{\eps})}{\eps}=\lim_{\eps\rightarrow 0^+}\frac{\mathcal H^{d}((\partial \Omega)_\eps)}{\eps}-\lim_{\eps\rightarrow 0^+}\frac{\mathcal H^{d}((A_\delta)_{\eps})}{\eps}=c\mathcal H^{d-1}(\partial \Omega\setminus A_\delta).
$$
Since $\mathcal{H}^{d-1} (\partial \Omega\setminus A_\delta)<\delta$, by possibly choosing $k_0$ larger, we deduce that $I_{\eps_k}^2\leq C \delta$, for all $k\geq k_0$. Summing up, we have proved that for any $\delta>0$, we can find a $k_0\in \N$ large enough such that $I_{\eps_k}\leq C\delta$ for all $k\geq k_0$, for some constant $C>0$ depending only on the domain $\Omega$. This gives \eqref{claim 1} and concludes the proof of the first part.

Let us now prove the second implication by contradiction. If \eqref{normale_trace_liminf} does not hold, then we find a sequence of radii $r_k\rightarrow 0^+$ such that 
\begin{equation}\label{liminf_with_a_seq}
\liminf_{k\rightarrow \infty} f_k (x):= \liminf_{k\rightarrow \infty}\frac{1}{r_k^d} \int_{B_{r_k} (x)\cap\Omega} \left|(u\cdot \nabla d_{\partial \Omega})(y)\right|\,dy>0
\end{equation}
on a set $C\subset \partial \Omega$ with $\mathcal H^{d-1} (C)>0$. We have $C=\bigcup_{m=1}^\infty C_m$, where
$$
C_m:=\left\{ x\,:\, \liminf_{k\rightarrow \infty} f_k(x)\geq \frac{1}{m}\right\}.
$$
Since $C_m \nearrow C$, we find $m$ large enough so that 
$$
\mathcal H^{d-1} (C_m)\geq \frac{\mathcal H^{d-1}(C)}{2}>0.
$$
Apply \cref{L:egorov_one_side} with $(X,\mu)=(C_m,\mathcal{H}^{d-1})$ and $L=\frac{1}{m}$, to find $\tilde C_m\subset C_m$ with 
$$
\mathcal H^{d-1} (\tilde C_m)\geq \frac{\mathcal H^{d-1}(C)}{4}>0
$$
and 
\begin{equation}
    \label{liminf_bounded_below_unif}
    \liminf_{k\rightarrow \infty} \inf_{x\in \tilde C_m} f_k(x)\geq \frac{1}{m}.
\end{equation}
Moreover, by interior approximation of Radon measures, we can also find a set $M\subset \tilde C_m$, closed in the induced topology on $\partial \Omega$, such that
$$
\mathcal H^{d-1} (M)\geq \frac{\mathcal H^{d-1}(C)}{8}>0.
$$
Then, cover $(M)_\e$ with $\{B_\eps(x)\}_{x\in M}$ and extract a finite and disjoint Vitali subcovering such that 
\begin{equation}
    \label{subcovering_vitali}
    (M)_\eps \subset \bigcup_{i=1}^{N_\eps} B_{5\eps}(x_i).
\end{equation}
From \eqref{subcovering_vitali} and \eqref{minkow=hausd}, if $\eps$ is sufficiently small, we obtain
$$
\frac{\mathcal H^{d-1} (C)}{16}\leq \frac{\mathcal H^{d-1} (M)}{2}\leq \frac{\mathcal H^{d} ((M)_\eps)}{\omega_1 \eps}\leq 5^d \frac{\omega_d}{\omega_1} N_\eps \eps^{d-1},
$$
from which
\begin{equation}
    \label{liminf_lower_bound_N_eps}
    \liminf_{\eps\rightarrow 0^+} N_{\eps} \eps^{d-1} \geq c \frac{\mathcal H^{d-1} (C)}{16}>0,
\end{equation}
for some dimensional constant $c>0$. But then, if $r_k$ is the original sequence we have chosen in \eqref{liminf_with_a_seq}, we can bound from below
\begin{align*}
    \int_{\Omega} \left| u\cdot \nabla \varphi_{r_k}\right| \, dy &\geq \frac{1}{r_k}\int_{(M)_{r_k} \cap \Omega}|u\cdot \nabla d_{\partial \Omega}| \, dy \geq \frac{1}{r_k}\int_{\bigcup_{i=1}^{N_{r_k}} B_{r_k}(x_i) \cap \Omega}|u\cdot \nabla d_{\partial \Omega}| \, dy \\
    &=\sum_{i=1}^{N_{r_k}} \frac{1}{r_k}\int_{ B_{r_k}(x_i) \cap \Omega}|u\cdot \nabla d_{\partial \Omega}| \, dy =\sum_{i=1}^{N_{r_k}} r_k^{d-1} f_k (x_i)\\
    &\geq \sum_{i=1}^{N_{r_k}} r_k^{d-1} \inf_{i} f_k (x_i)\geq N_{r_k} r_k^{d-1} \inf_{x\in M} f_{k} (x).
\end{align*}
Letting $k\rightarrow \infty$, by \eqref{liminf_bounded_below_unif} and \eqref{liminf_lower_bound_N_eps} we deduce
$$
\liminf_{k\rightarrow \infty} \int_{\Omega} \left| u\cdot \nabla \varphi_{r_k}\right| \, dy \geq c\frac{\mathcal{ H}^{d-1}(C)}{m16}>0,
$$
which shows that \eqref{vanish_boundary_flux} cannot hold.
\end{proof}

Recall that solutions of Euler, say $u\in L^2(\Omega\times (0,T))$, on a Lipschitz bounded domain $\Omega$ satisfy 
$$
\int_\Omega u(x,t)\cdot \nabla \varphi(x)\,dx= 0 \qquad \forall\varphi \in H^1(\Omega),
$$
for a.e. $t\in (0,T)$. The latter condition is indeed a way to define, in a weak sense, $L^2$ divergence-free vector fields tangent to $\partial \Omega$. However, this does not automatically imply that the Lebesgue normal boundary trace of $u$, in the sense of \cref{D:Leb normal trace}, vanishes. In fact, it is not even clear in general whether such a notion of trace exists. We give some conditions which make this true. 

\begin{proposition}\label{P:normal trace is zero}
Let $\Omega\subset \R^d$ be a bounded Lipschitz domain and let $u\in L^\infty(\Omega;\R^d)$ be a divergence-free vector field, tangent to the boundary, that is 
\begin{equation} \label{eq:tangent_div_free}
\int_\Omega u(x)\cdot \nabla \varphi(x)\,dx= 0 \qquad \forall\varphi \in W^{1,1}(\Omega).  
\end{equation}

Then $u_n^{\partial \Omega}\equiv 0$ if one of the following conditions holds:
\begin{itemize}
    \item[(i)] For $\mathcal H^{d-1}$-a.e. $x\in \partial \Omega$, $\exists\eps_0>0$ such that $u\cdot \nabla d_{\partial \Omega}\in C^0\left(B_{\eps_0}(x)\cap \overline \Omega\right)$;
    \item[(ii)] $u\in BV(\Omega;\R^d)$.
\end{itemize}
\end{proposition}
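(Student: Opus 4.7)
\emph{Plan.} The two cases are essentially independent. Case $(ii)$ is a direct consequence of the trace and distance-function theorems collected in \cref{S:tools}, while case $(i)$ is a test-function argument against the boundary cut-off \eqref{eq:cut_off} combined with the Minkowski content lower bound from \cref{P:federer minkowski rectifiable}.

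\emph{Case $(ii)$.} Since $u\in BV(\Omega;\R^d)\cap L^\infty$, \cref{T:trace_in_BV} gives a trace $u^\Omega\in L^\infty(\partial\Omega;\mathcal H^{d-1})$, and \cref{T:dist function and normal} identifies the inward normal $n(x)$ as the Lebesgue value of $\nabla d_{\partial\Omega}$ at $\mathcal H^{d-1}$-a.e.\ boundary point. Splitting $|u\cdot\nabla d_{\partial\Omega}-u^\Omega(x)\cdot n(x)|$ via the triangle inequality and using the $L^\infty$ bound on $u$, both error terms are controlled by these two Lebesgue-point properties; hence the normal Lebesgue trace of \cref{D:Leb normal trace} exists and equals $u^\Omega\cdot n$ at $\mathcal H^{d-1}$-a.e.\ $x\in\partial\Omega$. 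To see it vanishes, I would extend $u$ by zero to $\tilde u\in BV(\R^d;\R^d)$, read off from \cref{T:trace_in_BV} that $\div\tilde u=(u^\Omega\cdot n)\mathcal H^{d-1}\llcorner\partial\Omega$ (using $\div u=0$ in $\Omega$), and test \eqref{eq:tangent_div_free} against restrictions $\phi|_\Omega$ for arbitrary $\phi\in C^\infty_c(\R^d)$ to deduce $\div\tilde u=0$ as a distribution on $\R^d$. Comparing, $u^\Omega\cdot n=0$ $\mathcal H^{d-1}$-a.e.\ on $\partial\Omega$.

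\emph{Case $(i)$.} Pick $x_0\in\partial\Omega$ where $F:=u\cdot\nabla d_{\partial\Omega}$ extends continuously to $B_{\eps_0}(x_0)\cap\overline\Omega$. The boundary value $F(x_0)$ is immediately the normal Lebesgue trace at $x_0$, so the task reduces to showing $F(x_0)=0$. Assume for contradiction $F(x_0)>0$; continuity gives some $r_0<\eps_0$ with $F\ge F(x_0)/2$ on $B_{r_0}(x_0)\cap\overline\Omega$. Take $\psi\in C^\infty_c(B_{r_0}(x_0))$ with $\psi\ge 0$ and $\psi\equiv 1$ on $B_{r_0/2}(x_0)$, and test \eqref{eq:tangent_div_free} against $\varphi_\eps\psi\in W^{1,\infty}(\Omega)$. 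Writing $\nabla(\varphi_\eps\psi)=\varphi_\eps\nabla\psi+\psi\nabla\varphi_\eps$, the first summand converges to $\int_\Omega u\cdot\nabla\psi\,dx=0$ by dominated convergence and a second use of \eqref{eq:tangent_div_free}; hence $\eps^{-1}\int_{\Omega\setminus\Omega^\eps}\psi F\,dx\to 0$. On the other hand, from $F\ge F(x_0)/2$ on $\supp\psi\cap\overline\Omega$ and the inclusion $(C)_\eps\cap\Omega\subset B_{r_0/2}(x_0)\cap(\Omega\setminus\Omega^\eps)$ valid for $C:=\partial\Omega\cap\overline{B_{r_0/4}(x_0)}$ and $\eps$ small, \cref{P:federer minkowski rectifiable} forces the same quantity to have strictly positive $\liminf$, the positivity of $\mathcal H^{d-1}(C)$ being ensured by the Lipschitz-graph local structure of $\partial\Omega$. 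Contradiction.

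\emph{Main obstacle.} Case $(ii)$ is almost immediate from the trace theorems. The delicate part is case $(i)$: the continuity hypothesis provides the trace value $F(x_0)$ for free, but converting its vanishing into a rigorous contradiction requires pairing the tangency test against $\varphi_\eps\psi$ with \cref{P:federer minkowski rectifiable}, and tracking that the closed cap $C\subset\partial\Omega$ still carries positive $\mathcal H^{d-1}$-mass after the localisation. The Lipschitz-graph structure of $\partial\Omega$ at $x_0$ is the key ingredient that makes this step possible.
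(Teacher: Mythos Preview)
Your proposal is correct and follows essentially the same route as the paper: case $(ii)$ combines the $BV$ trace formula for the zero extension with \cref{T:dist function and normal} exactly as the paper does, and case $(i)$ is the same contradiction argument---testing \eqref{eq:tangent_div_free} against $\varphi_\eps$ times a bump localised near the boundary, then invoking the Minkowski lower bound \eqref{lower bound half density point} to force a positive $\liminf$. The only cosmetic difference is that the paper carries out the contradiction at an auxiliary point $z$ in the continuity patch around $x$, whereas you work directly at $x_0$; this is harmless since the hypothesis already places $x_0$ itself in the continuity region.
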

In particular, we emphasise that the assumption $(i)$ relaxes the usual one in which $u\cdot \nabla d_{\partial \Omega}$ is continuous in a full open neighbourhood of $\partial \Omega$. For instance, when the domain is piece-wise $C^2$, this allows us to deduce kinetic energy conservation on Lipschitz domains, without imposing anything in neighbourhoods of corner points of $\partial \Omega$.

\begin{proof}
Given $\e>0$, in the same notation of \eqref{eq:cut_off}, we define 
\begin{equation}
    \varphi_\e(x) = 
    \begin{cases}
        \e^{-1} d_{\partial \Omega} (x) & x \in \Omega \setminus \Omega^\e, 
        \\ 1 & x \in  \Omega^\e. 
    \end{cases}
\end{equation}
Then, by \eqref{eq:tangent_div_free}, for any test function $g \in C^1(\R^d)$,  we have that 
\begin{align*}
    \int_{\Omega} (u \cdot \nabla \varphi_\e) g \, dx & = \int_{\Omega} u \cdot \nabla (\varphi_\e  g) \, dx - \int_{\Omega} (u \cdot \nabla g)\varphi_\e \, dx  = - \int_{\Omega} (u\cdot \nabla g) \varphi_\e  \, dx. 
\end{align*}
For $\e \to 0$,  $\varphi_\e \to \mathds{1}_{\Omega}$ in $L^1(\Omega)$, from which
\begin{equation}\label{conv to zero}
    \lim_{\e \to 0} \int_{\Omega} (u\cdot  \nabla \varphi_\e) g\, dx = - \int_{\Omega} u \cdot\nabla g \, dx = 0. 
\end{equation}
Take $x\in \partial \Omega$ such that $u\cdot \nabla d_{\partial \Omega}\in C^0(B_{\eps_0}(x)\cap \overline \Omega)$, for some $\eps_0>0$. It is clear that if 
\begin{equation}
    \label{normal_comp_vanish_boundary}
    u\cdot \nabla d_{\partial \Omega}\Big|_{B_{\eps_0}(x)\cap \partial \Omega} \equiv 0,
\end{equation}
then, by the uniform continuity of $u\cdot \nabla d_{\partial \Omega}$ in $B_r(x)\cap \overline \Omega$, for all $r<\eps_0$, we get that 
$$
\lim_{r\rightarrow 0^+}\frac{1}{r^d}\int_{B_r(x)\cap \Omega} \left| (u\cdot \nabla d_{\partial \Omega} )(y)\right|\,dy=0,
$$
that is, the Lebesgue normal boundary trace of $u$ is zero. Thus we are left to prove \eqref{normal_comp_vanish_boundary}. By contradiction, suppose that $\exists z\in B_{\eps_0}(x)\cap \partial \Omega$ such that $(u\cdot \nabla d_{\partial \Omega})(z)=c\neq 0$. To fix the ideas, assume $c>0$. The proof in the case $c<0$ follows by straightforward modifications. Since $u\cdot \nabla d_{\partial \Omega}\in C^0(B_{\eps_0}(x)\cap \overline \Omega)$, we can find an $\eps_1=\eps_1(c)>0$ such that $B_{2\eps_1}(z)\subset \joinrel \subset B_{\eps_0}(x)$ and 
$$
u\cdot \nabla d_{\partial \Omega}\Big|_{B_{2\eps_1}(z)\cap \overline \Omega}\geq \frac{c}{2}.
$$
Pick $g\in C^1_c(B_{2\eps_1}(z))$ such that $g\geq 0$ and $g\big|_{B_{\eps_1}(z)}\equiv 1$. Then, for $\eps>0$ sufficiently small, we get
\begin{align*}
\int_\Omega u\cdot \nabla \varphi_\eps g\,dy &\geq \frac{1}{\eps}\int_{B_{\eps_1} (z)\cap (\Omega \setminus \Omega^\eps)}u\cdot \nabla d_{\partial \Omega}\,dy\\
&\geq \frac{c}{2}\frac{\mathcal{H}^d(B_{\eps_1} (z)\cap (\Omega \setminus \Omega^\eps))}{\eps}\\
&\geq C \mathcal{H}^{d-1}\left(\overline B_{\sfrac{\eps_1}{2}}(z)\cap \partial \Omega \right)>0,
\end{align*}
where in the last inequality we have applied \eqref{lower bound half density point} with $C=\overline B_{\sfrac{\eps_1}{2}}(z)\cap \partial \Omega$. This is in contradiction with \eqref{conv to zero}, and concludes the proof of $(i)$.

Assume that $u \in BV(\Omega;\R^d)$. Letting $\Tilde{u}$ be the zero extension of $u$ outside $\Omega$, by \cref{T:trace_in_BV}, we get that $\Tilde{u} \in BV(\R^d;\R^d)$ and the distributional gradient is given by 
$$\nabla  \Tilde{u} = (\nabla  u) \llcorner\Omega+ (u^\Omega \otimes n  ) \mathcal{H}^{d-1} \llcorner \partial \Omega \in \mathcal M(\R^d;\R^{d\times d}), $$
where $n$ is the inward unit normal to $\partial\Omega$. Note that \eqref{eq:tangent_div_free} is equivalent to the fact that $\Tilde{u}$ is divergence-free in $\R^d$. Hence, we have that 
$$0 = \div \Tilde{u} = (\div u) \llcorner \Omega +  (u^\Omega \cdot n)\mathcal{H}^{d-1}\llcorner \partial \Omega =  (u^\Omega\cdot n)  \mathcal{H}^{d-1}\llcorner \partial \Omega, $$
yielding $u^\Omega(x)\cdot n(x)  = 0$ for $\mathcal{H}^{d-1}$-a.e. $x \in \partial \Omega$. It remains to check that $u^\Omega\cdot n$ is the Lebesgue normal trace of $u$ on $\partial \Omega$ in the sense of \cref{D:Leb normal trace}. Indeed, for $\mathcal{H}^{d-1}$-a.e. $x \in \partial \Omega$, we have that 
\begin{align*}
    \frac{1}{r_k^d} \int_{B_{r_k}(x) \cap \Omega} \abs{(u \cdot\nabla d_{\partial \Omega})(y) - (u^{\Omega}\cdot n)(x) }\, dy & \leq \frac{1}{r_k^d} \int_{B_{r_k}(x) \cap \Omega} \abs{u(y) - u^{\Omega}(x)} \abs{\nabla d_{\partial \Omega}(y)}\, dy 
    \\ & + \frac{\abs{u^\Omega(x)}}{r_k^d} \int_{B_{r_k}(x) \cap \Omega} \abs{\nabla d_{\partial \Omega}(y) - n (x)}\, dy. 
\end{align*}
The first term goes to $0$ as $r_k \to 0$, by the definition of the trace for $BV$ functions (see \cref{T:trace_in_BV}) and the fact that $\abs{\nabla d_{\partial \Omega}(y)} \leq 1$ a.e. in $\Omega$. Similarly to \cref{R:trace is bounded}, from $u\in L^\infty(\Omega;\R^d)$ we deduce $u^\Omega\in L^\infty(\partial \Omega;\mathcal H^{d-1})$. Thus, \cref{T:dist function and normal} concludes the proof.

\end{proof}

\subsection{Kinetic energy conservation on bounded domains}

We are ready to prove energy conservation on bounded domains. 

\begin{proof}[Proof of \cref{T:energy cons bounded dom intro}]
Being $u$ a solution of \eqref{E}, we get that $D[u]$ does not depend on the choice of the kernel $\rho$. Thus, by \cref{T:cons_local_BV} we get $|D[u]|(K)=0$ for every $K\subset\joinrel\subset \Omega\times (0,T)$ compact. In particular, recalling that $u\in L^\infty_{\rm loc}$ and $p\in L^1_{\rm loc}$ by assumption, the following identity holds in $\mathcal D'(\Omega\times (0,T))$
\begin{equation}\label{local_energy_eq_used}
\partial_t  \frac{\abs{u}^2}{2}  + \div \left(  \left( \frac{\abs{u}^2}{2} + p \right)u \right) = 0. 
\end{equation}
Fix any $\phi \in C^1_c((0,T))$ and let $I\subset \joinrel \subset (0,T)$ be any time interval such that $\spt \phi\subset I$. Then, take the corresponding $\eps_0>0$ as in the assumption $(ii)$. For every  $\varphi \in C^\infty_c(\Omega)$ such that $\varphi \big|_{\Omega^{\eps_0}} \equiv 1$ (here we are using the notation $\Omega^{\eps_0}$ introduced in \eqref{eq:cut_off}) we have that 
\begin{align*}
    \abs{\int_0^T \phi'(t) \int_{\Omega} \frac{\abs{u}^2}{2} \varphi(x) \, dx \, dt} & = \abs{\int_0^T \phi(t) \int_{\Omega \setminus \Omega^{\eps_0}} \left( \frac{\abs{u}^2}{2} + p\right) u \cdot\nabla \varphi \, dx \,  dt }
    \\ & \leq C \int_I f(t)\int_{\Omega} \abs{u (x,t) \cdot \nabla \varphi(x)}\, dx \, dt,
\end{align*}
where 
$$
f(t):=\|u(t)\|^2_{L^\infty(\Omega)} +\|p(t)\|_{L^\infty((\partial \Omega)_{\eps_0}\cap \Omega)}\in L^1(I).
$$
Since $u\in L^\infty(I;L^\infty(\Omega))$, by a standard density argument the previous inequality remains true for all $\varphi\in W^{1,1}_0(\Omega)$. Thus, for every $0<\eps<\eps_0(I)$, we can choose $\varphi=\varphi_\eps$ as in \eqref{eq:cut_off}.
Letting $\e \to 0^+$ and recalling that $\varphi_\e \to \mathds{1}_{\Omega}$ in $L^1(\Omega)$, by Fatou's lemma and \cref{P:normal trace absol converg}, we have that
$$\abs{ \int_0^T \phi'(t) e_u(t) \, dt } \leq C \liminf_{\eps\rightarrow 0^+}\int_0^T f(t) \int_{\Omega} \abs{u (x,t) \cdot \nabla \varphi_\eps(x)}\, dx \, dt=0.  $$
In other words, since $\phi\in C^1_c((0,T))$ was arbitrary, the total kinetic energy is conserved in the sense of \cref{D:kinetic energy cons}. 
\end{proof}

Although \cref{T:energy cons bounded dom intro} provides kinetic energy conservation under quite explicit assumptions, such as the conditions given in \cref{P:normal trace is zero}, it is clear from the proof that the global $L^\infty$ assumption on $u$ and $p$ can be relaxed. Indeed, in the spirit of \cite{BTW19}, we have the following result.

\begin{theorem}
    \label{T:en cons bounded domain more general}
    Let $u\in L^2(\Omega\times (0,T))$ be a solution to \eqref{E} on a Lipschitz bounded domain $\Omega\subset \R^d$, such that $u\in L^1(I;BV (O))\cap L^\infty(O\times I)$ and $p\in L^1(O\times I)$, for all $I\subset \joinrel\subset (0,T)$ and $O\subset\joinrel\subset \Omega$ open sets. Suppose that
    \begin{equation}\label{cond en cons implicit}
        \liminf_{\eps\rightarrow 0^+}\left|\int_I \int_{\Omega} \left(\frac{|u|^2}{2}+p \right) u\cdot \nabla \varphi_\eps \,dx \, dt\right|=0 \qquad \forall I\subset \joinrel\subset (0,T),
    \end{equation}
    where 
    \begin{equation*}
\varphi_\eps (x):=\left\{\begin{array}{l}
 1 \quad  \quad \quad \quad \quad \,\text{if } x\in \Omega^{2\eps}\\
\eps^{-1} d_{\partial \Omega^\eps}(x)\quad \text{if } x\in \Omega^\eps \setminus \Omega^{2\eps}\\
0 \quad  \quad \quad \quad \quad \,\text{if } x\in \Omega\setminus\Omega^{\eps}
\end{array}\right.\quad \text{ with } \quad \Omega^\eps :=\left\{x\in \Omega\, :\, d_{\partial \Omega}(x)>\eps \right\}.
\end{equation*}
    Then, $u$ conserves the kinetic energy in the sense of \cref{D:kinetic energy cons}. 
\end{theorem}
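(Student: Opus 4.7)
The plan is to follow the structure of the proof of \cref{T:energy cons bounded dom intro}, replacing the normal-Lebesgue-trace analysis by a direct use of hypothesis \eqref{cond en cons implicit}. Since $u$ solves \eqref{E} and satisfies the local regularity assumed by \cref{T:cons_local_BV} (with $D[u]$ kernel-independent for Euler solutions), we obtain $|D[u]|(K)=0$ on every compact $K\subset\Omega\times(0,T)$, so the local energy balance
\[
\partial_t\left(\frac{|u|^2}{2}\right)+\div\left(\left(\frac{|u|^2}{2}+p\right)u\right)=0
\]
holds in $\mathcal D'(\Omega\times(0,T))$. Fix $\phi\in C^\infty_c((0,T))$. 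Since $\varphi_\eps$ is Lipschitz, nonnegative, and compactly supported in $\Omega^\eps\subset\joinrel\subset\Omega$, the product $\phi(t)\varphi_\eps(x)$ is approximable by smooth test functions via spatial convolution, and testing the balance against it produces
\[
\int_0^T \phi'(t)\,G_\eps(t)\,dt=-\int_0^T \phi(t)\,F_\eps(t)\,dt,
\]
with $G_\eps(t):=\int_\Omega\varphi_\eps\,\frac{|u|^2}{2}\,dx$ and $F_\eps(t):=\int_\Omega\nabla\varphi_\eps\cdot\left(\frac{|u|^2}{2}+p\right)u\,dx$.

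Next, I pass to the limit in both sides. On the LHS, since $0\leq\varphi_\eps\leq 1$, $\varphi_\eps(x)\to 1$ for every $x\in\Omega$ as $\eps\to 0^+$, and $e_u\in L^1((0,T))$ (from $u\in L^2(\Omega\times(0,T))$), dominated convergence yields $G_\eps\to e_u$ in $L^1((0,T))$, so the LHS tends to $\int \phi'\,e_u\,dt$. The identity above also shows that $G_\eps\in W^{1,1}_{\rm loc}((0,T))$ with $G_\eps'=F_\eps$ distributionally (note $F_\eps\in L^1_{\rm loc}((0,T))$ because $\nabla\varphi_\eps$ is bounded and supported in $\Omega^\eps\setminus\Omega^{2\eps}\subset\joinrel\subset\Omega$, on which $u$ is locally $L^\infty$ and $p$ locally $L^1$), so the continuous representative $\tilde G_\eps$ satisfies
\[
\tilde G_\eps(b)-\tilde G_\eps(a)=\int_a^b F_\eps\,dt\qquad\forall\,[a,b]\subset\joinrel\subset(0,T).
\]
Under this identification, hypothesis \eqref{cond en cons implicit} becomes exactly $\liminf_{\eps\to 0^+}|\tilde G_\eps(b)-\tilde G_\eps(a)|=0$ for every such interval.

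To conclude, the $L^1_{\rm loc}$-convergence $\tilde G_\eps\to e_u$ allows me to extract a sequence $\eps_n\to 0^+$ with $\tilde G_{\eps_n}(t)\to e_u(t)$ pointwise on a full-measure set $E\subset(0,T)$. Choosing a countable dense family $\{[a_j,b_j]\}$ of intervals with $a_j,b_j\in E$, a nested subsequence extraction exploiting the hypothesis at each $[a_j,b_j]$ together with the $L^1_{\rm loc}$ convergence produces a subsequence $\eps_{n_k}\to 0^+$ along which pointwise convergence is preserved at every $a_j,b_j$ and $|\tilde G_{\eps_{n_k}}(b_j)-\tilde G_{\eps_{n_k}}(a_j)|\to 0$ for every $j$. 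The pointwise convergence then forces $|e_u(b_j)-e_u(a_j)|=0$, and density of $\{(a_j,b_j)\}$ gives that $e_u$ coincides a.e.\ with a constant on $(0,T)$; hence $\langle e_u,\phi'\rangle=0$ for every $\phi\in C^\infty_c((0,T))$, as required by \cref{D:kinetic energy cons}. The main technical difficulty is precisely this final diagonal extraction: since the hypothesis yields only a $\liminf$ statement along the continuous parameter $\eps\to 0^+$, one must carefully align the subsequence witnessing the vanishing of the boundary flux integrals with the one carrying the pointwise convergence $\tilde G_\eps\to e_u$.
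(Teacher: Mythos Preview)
Your overall scheme is right and matches the paper's outline: use \cref{T:cons_local_BV} to kill $D[u]$ locally, test the local energy balance against $\phi(t)\varphi_\eps(x)$, and pass to the limit. The paper simply says the argument is ``the same'' as for \cref{T:energy cons bounded dom intro}; there, one bounds $\bigl|\int_0^T\phi\,F_\eps\,dt\bigr|\le\|\phi\|_\infty\int_I\int_\Omega\bigl|Pu\cdot\nabla\varphi_\eps\bigr|\,dx\,dt$ and sends $\eps\to0$. Read in that spirit (and consistently with the discussion right after the statement, which estimates $\int_I\int_\Omega|Pu\cdot\nabla\varphi_\eps|$), the ``same argument'' is immediate once the hypothesis controls the quantity with absolute values \emph{inside} the integral. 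Your more elaborate route, treating the hypothesis literally as a $\liminf$ of $\bigl|\int_I F_\eps\bigr|$, is genuinely more delicate.

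There is, however, a real gap in your final step. You first extract a subsequence $\eps_n$ from the $L^1_{\rm loc}$ convergence so that $\tilde G_{\eps_n}\to e_u$ a.e., and then propose to ``exploit the hypothesis at each $[a_j,b_j]$'' by nested extraction from $\{\eps_n\}$. But hypothesis \eqref{cond en cons implicit} only gives $\liminf_{\eps\to 0^+}|\int_I F_\eps|=0$ over the \emph{continuous} parameter; the sequence realizing this $\liminf$ for a given interval has no reason to be a subsequence of your $\{\eps_n\}$, so the nested extraction cannot be carried out as written. You flag this difficulty in your last sentence, but the argument you sketch does not resolve it.

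The fix is to avoid the initial extraction altogether. Since $0\le\varphi_\eps\le 1$ and $\varphi_\eps(x)\to 1$ for every $x\in\Omega$, dominated convergence in $x$ gives $G_\eps(t)\to e_u(t)$ for a.e.\ $t\in(0,T)$ along the \emph{full} family $\eps\to 0^+$, not merely along a subsequence. Hence for any $a,b$ in that full-measure set you may take directly the sequence $\eta_m\to 0$ provided by the hypothesis for $I=[a,b]$, and still have $G_{\eta_m}(a)\to e_u(a)$, $G_{\eta_m}(b)\to e_u(b)$; combined with $\tilde G_{\eta_m}(b)-\tilde G_{\eta_m}(a)=\int_a^b F_{\eta_m}\to 0$ this forces $e_u(a)=e_u(b)$. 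One still has to make sure the pointwise-defined $G_{\eta_m}$ agrees with the continuous representative $\tilde G_{\eta_m}$ at the specific endpoints $a,b$ (the null set where they differ a priori depends on $\eta_m$, which itself depends on $a,b$); this can be handled, e.g.\ by working with short averages $\frac{1}{h}\int_a^{a+h}G_\eps$ instead of point values, but it requires a couple more lines than your sketch suggests.
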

Differently from \cite{BTW19}, our result provides energy conservation in a critical Onsager class. Being the same argument already used above in the proof of \cref{T:energy cons bounded dom intro}, we omit the details.

It is clear that \cref{T:en cons bounded domain more general} assumes a more general condition with respect to \cref{T:energy cons bounded dom intro} where both $u$ and $p$ are assumed to be bounded in a neighbourhood of $\partial \Omega$. Indeed, one can estimate
\begin{align*}
\int_0^T \int_{\Omega} \left|\left(\frac{|u|^2}{2}+p \right) u\cdot \nabla \varphi_\eps \right|\,dx \, dt\leq  \|P\|_{L^\infty((\Omega^\eps\setminus \Omega^{2\eps} ) \times I)}\int_0^T\int_{\Omega} \left| u\cdot \nabla \varphi_\eps \right|\,dx \, dt,
\end{align*}
where $P:=\frac{|u|^2}{2}+p$ denotes the Bernoulli pressure. In particular, $\|P\|_{L^\infty((\Omega^\eps\setminus \Omega^{2\eps}) \times I)}$ is also allowed to blow-up in the limit as $\eps\rightarrow 0^+$, provided that the last term $\int_0^T\int_{\Omega} \left| u\cdot \nabla \varphi_\eps \right|$ compensates. This happens, for instance, if $u$ achieves its Lebesgue normal boundary trace in a fast enough way.

\bibliographystyle{plain} 
\bibliography{biblio}

\end{document}